\documentclass[10pt]{amsart}

\usepackage{latexsym}
\usepackage{amssymb}
\usepackage{amsmath}
\usepackage{xcolor}
\usepackage{tikz-cd}
\usepackage{mathrsfs}
\usepackage{mathtools}

\usepackage[italian, english]{babel}
\usepackage{url}

\DeclareMathAlphabet{\mathpzc}{OT1}{pzc}{m}{it}

\newtheorem{theorem}{Theorem}[section]
\newtheorem{lemma}[theorem]{Lemma}
\newtheorem{proposition}[theorem]{Proposition}
\newtheorem{corollary}[theorem]{Corollary}

\theoremstyle{definition}
\newtheorem{definition}[theorem]{Definition}

\newtheorem{example}[theorem]{Example}

\theoremstyle{remark}

\newtheorem{remark}[theorem]{Remark}
\newtheorem{fact}[theorem]{Fact}

\newtheorem{notation}[theorem]{Notation}

\def\N{\mathbb{N}}

\def\R{\mathbb{R}}

\def\PP{\mathcal{P}}
\def\U{\mathcal{U}}
\def\V{\mathcal{V}}
\def\W{\mathcal{W}}

\def\Xc{\mathcal{X}}
\def\Yc{\mathcal{Y}}
\def\Zc{\mathcal{Z}}

\makeindex

\begin{document}

\title{The magic of tensor products of ultrafilters}

\author{Mauro Di Nasso}



\subjclass[2000]
{Primary 05D10; Secondary 03E05, 54D80.}

\keywords{Ultrafilters, Tensor products, Nonstandard analysis.}


\begin{abstract}
Tensor products of ultrafilters have special combinatorial features 
closely related to Ramsey's Theorem, making them useful tools in applications. 
Here we first review their fundamental properties and isolate some new ones, 
including a characterisation of the limit superior and inferior of sequences 
as limits along tensor products, and an application to the Banach asymptotic density. 
We then prove a general result on the combinatorial structure of sets 
belonging to tensor products and, as a result, we obtain several characterisations 
of the additive properties of sets of natural numbers.
Finally, we show that tensor products can be described as idempotents 
of an appropriate semigroup.
\end{abstract}

\maketitle

\tableofcontents

\section*{Introduction}

Ultrafilters are special mathematical objects that have been 
extensively studied both as such and for their many applications 
in different areas of mathematics, including topology, Banach spaces, Ramsey Theory and combinatorics. 
Ultrafilters can be seen as finitely additive probability measures defined on
all subsets of a given set, with the special feature that they only takes
the two values $0$ or $1$.
Here we focus on their tensor products, a canonical way of producing 
ultrafilters on Cartesian products that can be seen as 
the products of the corresponding measures.
Tensor products of ultrafilters have special combinatorial properties 
closely related to Ramsey's Theorem that make them useful tools in the practice. 
In this paper we review their fundamental properties and isolate some new ones.
In particular, we prove a general result on the combinatorial structure of sets 
belonging to tensor products.

\smallskip
The paper is organized as follows.
After a first section where we recall the basic notions about ultrafilters
and their tensor products,
in the second section we make explicit the connection
between tensor products and Ramsey's Theorem.
Sections 3 and 4 are dedicated to simple characterisations
of limits of double-indexed sequences
and of limit superior and limit inferior of sequences, as limits along tensor products; 
an application to Banach asymptotic density is then presented. 
In the following Section 5 we characterise tensor products as the idempotent elements
of the semigroup on the space of ultrafilters, as originated
from the simple associative operation on pairs: $(a,b)\star(a',b')=(a,b')$.
In Section 6 we present a general characterisation of the sets 
belonging to tensor products in terms of their Ramsey properties,
and we provide several examples. In particular, we show how such characterisations
allow to reformulate in a convenient and compact way
the properties of containing certain sum-sets,
as considered in additive combinatorics.
The last section contains the proof of the main result presented in the previous section.

\medskip
\section{Preliminary notions and facts}

Let us recall the basic notions we will use throughout the paper.

\smallskip
Let $I$ be an arbitrary nonempty set.
Recall that an \emph{ultrafilter} on $I$ is a family $\U$ of subsets of $I$
that satisfies the following properties:

\begin{enumerate}
\item[(U1)]
$\emptyset\notin\U$ and $I\in\U$.
\item[(U2)]
If $A\supseteq B$ are subsets of $I$ and $B\in\U$ then $A\in\U$.
\item[(U3)]
If $A, B\in\U$ then $A\cap B\in\U$.
\item[(U4)]
If $A$ is a subset of $I$ and $A\notin\U$ then the complement $A^c=I\setminus A\in\U$.
\end{enumerate}

An alternative presentation of ultrafilters is by means of
``special" measures defined on all subsets.

\begin{fact}
Given an ultrafilter $\U$ on $I$, the function $\mu_\U:\PP(I)\to\{0,1\}$
where $\mu_\U(A)=0\Leftrightarrow A\in\U$ and $\mu_\U(A)=1\Leftrightarrow A\in\U$,
is a finitely additive measure. Conversely, given
a finitely additive two-valued measure $\mu:\PP(I)\to\{0,1\}$ defined on all subsets of $I$,
the family $\U_\mu=\{A\subseteq I\mid \mu(A)=1\}$
of the sets of full measure is an ultrafilter on $I$.
Moreover, the two correspondences $\U\mapsto\mu_\U$ and $\mu\mapsto\U_\mu$
are one the inverse of the other, \emph{i.e.} $\U_{\mu_\U}=\U$ for every ultrafilter $\U$ on $I$,
and $\mu_{\U_\mu}=\mu$ for every finitely additive two-valued measure $\mu:\PP(I)\to\{0,1\}$.
\end{fact}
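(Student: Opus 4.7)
The plan is to split the verification into three independent parts—checking that $\mu_\U$ is a finitely additive two-valued measure, that $\U_\mu$ is an ultrafilter, and that the two assignments are mutual inverses—each one handled by a direct appeal to the defining axioms (U1)--(U4). Throughout I read $\mu_\U$ as the obvious characteristic function of $\U$: value $1$ on members of $\U$, value $0$ elsewhere.

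For $\mu_\U$, normalisation $\mu_\U(\emptyset)=0$ and $\mu_\U(I)=1$ restates (U1). For finite additivity on disjoint $A,B$, the key observation is that $A\cap B=\emptyset\notin\U$, so (U3) forbids both sets from lying in $\U$ simultaneously. I would then treat the remaining cases: if exactly one of them, say $A$, is in $\U$, (U2) gives $A\cup B\in\U$ and both sides of additivity equal $1$; if neither is in $\U$, (U4) places $A^c,B^c\in\U$, so by (U3) $(A\cup B)^c\in\U$, whence $A\cup B\notin\U$ (using (U3) and (U1) once more) and both sides equal $0$.

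For $\U_\mu$, (U1) is just $\mu(\emptyset)=0$ and $\mu(I)=1$; (U2) follows because finite additivity forces $\mu$ to be monotone and $\mu$ is $\{0,1\}$-valued, so $B\in\U_\mu$ with $B\subseteq A$ yields $\mu(A)=1$; (U4) is $\mu(A)+\mu(A^c)=\mu(I)=1$ combined with the two-valued range. The only mildly delicate axiom is (U3), which I would derive via $(A\cap B)^c=A^c\cup B^c$ and finite sub-additivity applied to the disjointification $A^c\cup(B^c\setminus A^c)$: if $\mu(A^c)=\mu(B^c)=0$ then $\mu((A\cap B)^c)=0$, hence $\mu(A\cap B)=1$. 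The mutual-inverse property $\U_{\mu_\U}=\U$ and $\mu_{\U_\mu}=\mu$ is then a one-line unravelling of the two definitions.

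The main step to watch is the case $A,B\notin\U$ in the finite additivity argument, where one must chain (U4), (U3), and (U1) to conclude that $A\cup B$ is also outside $\U$; the other verifications are routine axiom-chasing.
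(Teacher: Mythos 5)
Your proposal is correct: all the case analyses (the impossibility of $A,B\in\U$ for disjoint $A,B$, the treatment of $A,B\notin\U$ via (U4) and (U3), the derivation of (U3) for $\U_\mu$ from monotonicity and additivity on the disjointification of $A^c\cup B^c$) go through, and you rightly read $\mu_\U$ as the characteristic function of $\U$ despite the typo in the statement (which says $\mu_\U(A)=0\Leftrightarrow A\in\U$ where it means $A\notin\U$). The paper states this as a \emph{Fact} with no proof at all, so there is no argument to compare against; yours is the standard routine verification and nothing more is needed.
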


Trivial examples are given by the \emph{principal ultrafilters} $\U_i=\{A\subseteq I\mid i\in I\}$
generated by an element $i\in I$. It is easily seen that an ultrafilter
$\U$ on $I$ is non-principal if and only if it includes the family
of \emph{cofinite subsets} of $I$, \emph{i.e.} $A\in\U$ whenever the complement $I\setminus A$ is finite.

\smallskip
Given an ultrafilter $\U$ on a set $I$ and a function $f:I\to J$,
one obtains an ultrafilter $f(\U)$ on $J$, called the \emph{image ultrafilter} of $\U$
under $f$, by setting for every $X\subseteq J$:
$$X\in f(\U)\Leftrightarrow f^{-1}(X)\in\U.$$

A canonical operation between ultrafilters, on which this paper is focused,  is the following.

\begin{definition}
Let $\U$ and $\V$ be ultrafilters on the sets $I$ and $J$, respectively.
Their \emph{tensor product} (sometimes called \emph{Fubini product}) is the ultrafilter $\U\otimes\V$
on the Cartesian product $I\times J$ defined by setting for every $X\subseteq I\times J$:
$$X\in\U\otimes\V\ \Longleftrightarrow\
\{i\in I\mid X_i\in\V\}\in\U$$
where $X_i:=\{j\in I\mid (i,j)\in X\}$ is the vertical $i$-fiber of $X$.
\end{definition}

It can be directly verified from the definition that $\U\otimes\V$ is actually an ultrafilter.
It is also easily verified that $\U\otimes\V$ extends the \emph{product family} $\U\times\V$:
$$\U\times\V:=\{A\times B\mid A,\in\U, B\in\V\}.$$

Note that the product family $\U\times\V$ has the finite intersection property but it is not
a filter because it is not closed under supersets.

\begin{fact}
The tensor product operation between ultrafilters is associative.
Indeed, given ultrafilters $\U,\V,\W$ on the sets $I$, $J$ and $K$ respectively,
it can be directly verified from the definitions that
$\U\otimes(\V\otimes\W)=(\U\otimes\V)\otimes\W$,
provided one identifies the Cartesian products
$I\times(J\times K)$ and $(I\times J)\times K$.
\end{fact}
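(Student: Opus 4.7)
The plan is to verify the equality directly by unpacking both sides of the asserted equation against the defining condition for tensor products, applied to an arbitrary subset $X\subseteq I\times J\times K$ (identified simultaneously with a subset of $I\times(J\times K)$ and of $(I\times J)\times K$). Since two ultrafilters on the same set coincide iff they contain exactly the same sets, it suffices to show that membership of $X$ in $\U\otimes(\V\otimes\W)$ is equivalent to membership of $X$ in $(\U\otimes\V)\otimes\W$, and this will be done by reducing each condition to the same triply-nested statement about the iterated fibers of $X$.

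First I would compute the left-hand side. By definition, $X\in\U\otimes(\V\otimes\W)$ iff $\{i\in I\mid X_i\in\V\otimes\W\}\in\U$, where $X_i=\{(j,k)\in J\times K\mid (i,j,k)\in X\}$. Applying the definition once more to $X_i\in\V\otimes\W$, one gets $\{j\in J\mid (X_i)_j\in\W\}\in\V$, where $(X_i)_j=\{k\in K\mid (i,j,k)\in X\}$. Putting these together, $X\in\U\otimes(\V\otimes\W)$ is equivalent to
\[
\bigl\{i\in I\,\bigm|\,\{j\in J\mid \{k\in K\mid (i,j,k)\in X\}\in\W\}\in\V\bigr\}\in\U.
\]

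Next I would perform the symmetric computation for the right-hand side. Viewing $X$ as a subset of $(I\times J)\times K$, by definition $X\in(\U\otimes\V)\otimes\W$ iff the set $Y:=\{(i,j)\in I\times J\mid X_{(i,j)}\in\W\}$ belongs to $\U\otimes\V$, where $X_{(i,j)}=\{k\in K\mid (i,j,k)\in X\}$. Unfolding $Y\in\U\otimes\V$ gives $\{i\in I\mid Y_i\in\V\}\in\U$, and since $Y_i=\{j\in J\mid (i,j)\in Y\}=\{j\in J\mid \{k\mid (i,j,k)\in X\}\in\W\}$, we arrive at exactly the same triply-nested condition displayed above.

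The argument is thus essentially a bookkeeping exercise: the only real step is to note that the iterated fibers $(X_i)_j$ and $X_{(i,j)}$ coincide under the canonical identification of $I\times(J\times K)$ with $(I\times J)\times K$, after which the two membership conditions are syntactically identical. I do not anticipate any obstacle; the proof is a transparent unfolding of definitions, which is why the text remarks that it can be directly verified.
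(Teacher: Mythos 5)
Your computation is correct and is precisely the direct verification from the definitions that the paper invokes without writing out: both memberships unfold to the same triply-nested condition $\{i\in I\mid\{j\in J\mid\{k\in K\mid (i,j,k)\in X\}\in\W\}\in\V\}\in\U$. Nothing further is needed.
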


\begin{fact}
The tensor product of ultrafilters corresponds to
the product measure. Indeed, given $\U,\V$ ultrafilters on $I$ and $J$ respectively,
it is then easily seen that 
$\U\otimes\V=\{X\subseteq I\times J\mid (\mu_\U\otimes\mu_\V)(X)=1\}$,
where $\mu_\U,\mu_\V:\PP(I)\to\{0,1\}$ are the finitely additive measures
corresponding to $\U$ and $\V$, respectively,
and $\mu_\U\otimes\mu_\V$ is their product measure on $I\times J$.
\end{fact}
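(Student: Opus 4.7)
The plan is to unpack the definition of the product of two finitely additive two-valued measures via Fubini-style iterated integration, and then observe that in the $\{0,1\}$-valued setting this reduces, almost by inspection, to the combinatorial definition of $\U\otimes\V$. So the first move is to spell out what $\mu_\U\otimes\mu_\V$ means here: for an arbitrary $X\subseteq I\times J$, set
$$(\mu_\U\otimes\mu_\V)(X)\ :=\ \int_I \mu_\V(X_i)\,d\mu_\U(i).$$
Because both $\mu_\U$ and $\mu_\V$ are defined on the full power sets, no measurability issue arises: the fiber $X_i\subseteq J$ is arbitrary but $\mu_\V(X_i)\in\{0,1\}$ is unambiguously defined, and the resulting function $\varphi_X:I\to\{0,1\}$, $\varphi_X(i):=\mu_\V(X_i)$, may be integrated against $\mu_\U$ since $\mu_\U$ too is defined on every subset of $I$.

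Next, I would observe that for any $\{0,1\}$-valued function $\varphi$ on $I$ and any two-valued measure $\mu_\U$, the integral degenerates to $\int \varphi\,d\mu_\U = \mu_\U(\varphi^{-1}(1))$ (simple functions collapse to indicators of their $1$-level set). Applied to $\varphi=\varphi_X$ this gives
$$(\mu_\U\otimes\mu_\V)(X)\ =\ \mu_\U\bigl(\{i\in I\mid \mu_\V(X_i)=1\}\bigr).$$
Invoking the correspondence of the preceding Fact, $\mu_\V(X_i)=1\Leftrightarrow X_i\in\V$ and $\mu_\U(A)=1\Leftrightarrow A\in\U$, so the right-hand side equals $1$ exactly when $\{i\in I\mid X_i\in\V\}\in\U$. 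By definition, this is precisely the condition $X\in\U\otimes\V$, giving the claimed set equality $\U\otimes\V=\{X\mid(\mu_\U\otimes\mu_\V)(X)=1\}$.

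There is no genuine obstacle here; the statement is really a notational translation between the measure-theoretic and combinatorial presentations. The only point worth flagging is that the product of finitely additive (non-$\sigma$-additive) measures is sensitive to the order of integration, but this is consistent with the well-known non-commutativity of $\otimes$ on ultrafilters, and the convention chosen above (inner integration along the $j$-fibers, outer against $\mu_\U$) matches the asymmetric fiber definition of $\U\otimes\V$ used in the paper.
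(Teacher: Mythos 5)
Your proof is correct and is exactly the computation the paper has in mind (the paper states this Fact without proof, as ``easily seen''): unwinding the iterated-integral definition of $\mu_\U\otimes\mu_\V$ for $\{0,1\}$-valued finitely additive measures reduces the condition $(\mu_\U\otimes\mu_\V)(X)=1$ to $\{i\in I\mid X_i\in\V\}\in\U$, which is the definition of $X\in\U\otimes\V$. Your remark that one must fix the order of integration --- and that the chosen order matches the asymmetry of the fiber definition --- is a worthwhile clarification, since the paper never pins down what ``product measure'' means on all of $\PP(I\times J)$.
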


Listed below are some
equivalent reformulation of the property of being a tensor product.

\begin{theorem}\label{tensorproductsequivalences}
Let $\W$ be an ultrafilter on $I\times J$. Then the following are equivalent:
\begin{enumerate}
\item[$(i)$]
$\W$ is a tensor product, \emph{i.e.} $\W=\U\otimes\V$
for suitable ultrafilters $\U$ and $\V$ on $I$ and $J$, respectively.
\item[$(ii)$]
$\W=\pi_1(\W)\otimes\pi_2(\W)$ is the tensor product of the image ultrafilters
under the canonical projections $\pi_1:I\times J\to I$ and $\pi_2:I\times J\to J$.
\item[$(iii)$]
For every $X\subseteq I\times J$, if $X_i\in\pi_2(\W)$ for all $i\in I$
then $X\in\W$.
\item[$(iv)$]
For every $X\subseteq I\times J$, if $X\in\W$ then there exists $i\in I$
with $X_i\in\pi_2(\W)$.
\end{enumerate}
Besides, if $\pi_1(\W)$ in not principal, then the above properties are also equivalent to:
\begin{enumerate}
\item[$(v)$]
For every $X\subseteq I\times J$, if $X_i\in\pi_2(\W)$ for all but finitely many $i\in I$
then $X\in\W$.
\item[$(vi)$]
For every $X\subseteq I\times J$, if $X\in\W$ then $X_i\in\pi_2(\W)$
for infinitely many $i\in I$.
\end{enumerate}
\end{theorem}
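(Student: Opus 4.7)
The plan is to establish the implications in a cycle (i) $\Rightarrow$ (ii) $\Rightarrow$ (iii) $\Rightarrow$ (iv) $\Rightarrow$ (ii), noting that (ii) $\Rightarrow$ (i) is trivial by taking $\U=\pi_1(\W)$ and $\V=\pi_2(\W)$. For (i) $\Rightarrow$ (ii), assuming $\W=\U\otimes\V$, I would just compute the image ultrafilters: for $A\subseteq I$, the fibers of $\pi_1^{-1}(A)=A\times J$ are $J$ (in $\V$) for $i\in A$ and $\emptyset$ (not in $\V$) otherwise, so $A\in\pi_1(\W)\Leftrightarrow A\in\U$; and symmetrically $\pi_2(\W)=\V$. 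This gives $\W=\pi_1(\W)\otimes\pi_2(\W)$.

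For (ii) $\Rightarrow$ (iii), if every fiber $X_i$ lies in $\pi_2(\W)$ then $\{i\in I : X_i\in\pi_2(\W)\}=I\in\pi_1(\W)$, so $X\in\pi_1(\W)\otimes\pi_2(\W)=\W$. The equivalence (iii) $\Leftrightarrow$ (iv) is by contraposition and the identity $(X^c)_i=J\setminus X_i$, together with the fact that membership in the ultrafilter $\pi_2(\W)$ is complement-sensitive: $X_i\notin\pi_2(\W)$ iff $(X^c)_i\in\pi_2(\W)$.

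The main work is (iii) $\Rightarrow$ (ii). Given $X\subseteq I\times J$, set $A=\{i\in I : X_i\in\pi_2(\W)\}$ and split $X=X'\sqcup X''$ with $X'=X\cap(A\times J)$ and $X''=X\cap(A^c\times J)$. The complement of $X''$ has fibers equal to $J$ on $A$ and equal to $J\setminus X_i\in\pi_2(\W)$ on $A^c$; applying (iii) to $(X'')^c$ yields $X''\notin\W$. Applying (iii) to $X'\cup(A^c\times J)$ (whose fibers are $X_i\in\pi_2(\W)$ on $A$ and $J$ on $A^c$) gives $X'\cup(A^c\times J)\in\W$. From here an ultrafilter case-split on whether $A\in\pi_1(\W)$, i.e.\ whether $A\times J\in\W$, produces the equivalence $X\in\W\Leftrightarrow A\in\pi_1(\W)$, which is exactly the statement $\W=\pi_1(\W)\otimes\pi_2(\W)$. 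The one place to be careful is keeping track of which of $A\times J$ and $A^c\times J$ lies in $\W$ in each case, so that the inclusion $X'\subseteq A\times J$ and the fact $X''\notin\W$ combine correctly.

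For the final two equivalences under the hypothesis that $\pi_1(\W)$ is non-principal, the key observation is that every finite $F\subseteq I$ satisfies $F\times J\notin\W$. Then (v) $\Rightarrow$ (iii) is immediate (the hypothesis of (iii) is stronger), and for (iii) $\Rightarrow$ (v), if $X_i\in\pi_2(\W)$ for all $i\notin F$, I would replace $X$ by $X\cup(F\times J)$, whose fibers all lie in $\pi_2(\W)$, apply (iii), and subtract back the $\W$-null set $F\times J$. Symmetrically, (vi) $\Rightarrow$ (iv) is trivial, and (iv) $\Rightarrow$ (vi) follows by contradiction: if only finitely many fibers of $X$ lie in $\pi_2(\W)$, removing the corresponding strip $F\times J$ keeps $X$ in $\W$ but now \emph{no} fiber lies in $\pi_2(\W)$, contradicting (iv).
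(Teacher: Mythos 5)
Your proposal is correct and follows essentially the same route as the paper: the same computation of $\pi_1(\W)$ and $\pi_2(\W)$ for $(i)\Leftrightarrow(ii)$, the same auxiliary set $X\cup(A^c\times J)$ with $A=\{i\mid X_i\in\pi_2(\W)\}$ for $(iii)\Rightarrow(ii)$, and the same complementation trick for $(iii)\Leftrightarrow(iv)$. The only cosmetic difference is in the last part, where you derive $(iii)\Rightarrow(v)$ and $(iv)\Rightarrow(vi)$ directly by padding with, or deleting, a strip $F\times J\notin\W$, whereas the paper proves $(ii)\Rightarrow(v)$ and gets $(v)\Leftrightarrow(vi)$ by contraposition; both arguments rest on the same observation that non-principality of $\pi_1(\W)$ forces $F\times J\notin\W$ for finite $F$.
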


\begin{proof}
$(i)\Leftrightarrow(ii)$ directly follows from equalities 
$\pi_1(\U\otimes\V)=\U$ and $\pi_2(\U\otimes\V)=\V$.

\smallskip
$(ii)\Rightarrow(iii)$.
By the hypothesis, $\{i\in I\mid X_i\in\pi_2(\W)\}=I\in\pi_1(\W)$,
and hence $X\in\pi_1(\W)\otimes\pi_2(\W)$.

\smallskip
$(iii)\Rightarrow(ii)$.
It is enough to show the inclusion $\pi_1(\W)\otimes\pi_2(\W)\subseteq\W$.
By definition, $X\in\pi_1(\W)\otimes\pi_2(\W)$ if and only if
$A:=\{i\in I\mid X_i\in\pi_2(\W)\}\in\pi_1(\W)$.
Now let $Y:=X\cup(A^c\times J)$. Note that $Y_i\in\pi_2(\W)$
for all $i\in I$ and so, by our assumption, $Y\in\W$.
Finally, observe that $A\in\pi_1(\W)\Leftrightarrow\pi_1^{-1}(A)=A\times J\in\W$,
and we can conclude that $X=Y\cap(A\times J)\in\W$.

\smallskip
$(iii)\Leftrightarrow(iv)$. By considering the contrapositive implication, 
one can reformulate $(iii)$ as follows:
``For every $X\subseteq I\times J$, if $X\notin\W$
then there exists $i\in I$ with $X_i\notin\pi_2(\W)$."
Passing to complements, by the property of ultrafilter
the previous statement is equivalent to:
``For every $X\subseteq I\times J$, if $X^c\in\W$ then
there exists $i\in I$ with $(X_i)^c=(X^c)_i\in\pi_2(\W)$". 
It is then readily verified that this last statement is in turn equivalent to $(iv)$.

\smallskip
$(v)\Leftrightarrow(vi)$ is proved similarly to the equivalence $(iii)\Leftrightarrow(iv)$.

\smallskip
$(v)\Rightarrow(iii)$ and $(vi)\Rightarrow(iv)$ are trivial.

\smallskip
$(ii)\Rightarrow(v)$.
Let $X\subseteq I\times J$, and assume that  
$A:=\{i\in I\mid X_i\in\pi_2(\W)\}$ is cofinite. Since $\pi_1(\W)$ is non-principal, 
$A\in\pi_1(\W)$, and this means that $X\in\pi_1(\W)\otimes\pi_2(\W)$.
By our assumption, we conclude that $X\in\W$.
\end{proof}

In the special case where the sets of indexes $I=J=\N$,
another relevant equivalent condition for an ultrafilter to be a tensor product
was proved in 1971 by C. Puritz \cite[Theorem\,3.4]{pu} in the language of nonstandard analysis.
By using the same ideas of Puritz', reformulated in the language of ultrafilters,
one can extend his result to arbitrary sets $J$, provided that the ultrafilter 
$\pi_2(\W)$ on $J$ is countably incomplete.

\smallskip
Recall that an ultrafilter $\U$ on $I$ is \emph{countably incomplete}
if it is not closed under countable intersections, \emph{i.e.}
if there exists a countable family $\{A_n\mid n\in\N\}\subset\U$ such
that $\bigcap_{n\in\N}A_n\notin\U$.
It is easy to show that an ultrafilter $\U$ on $I$ is countably incomplete
if and only if one can obtain $I=\bigcup_{n\in\N}B_n$ as a countable
union of sets $B_n\notin\U$.\footnote
{~If $\{A_n\mid n\in\N\}\subseteq\U$ is a family whose intersection
$X:=\bigcap_{n\in\N}A_n\notin\U$, then the set $I=X\cup\bigcup_{n\in\N}(I\setminus A_n)$
is a countable union of sets that do not belong to $\U$.
Conversely, given $I=\bigcup_{n\in\N}B_n$ where every $B_n\notin\U$, 
the countable intersection $\bigcap_{n\in\N}(I\setminus B_n)=\emptyset\notin\U$,
while every set $I\setminus B_n\in\U$.}

\begin{remark}\label{remark-countablyincomplete}
Trivially, every principal ultrafilter is countably complete. 
We observe that
every non-principal ultrafilter $\U$ on $\N$ is countably incomplete,
since $\N=\bigcup_{n\in\N}\{n\}$ where every singleton $\{n\}\notin\U$.
We remark that there may exist non-principal ultrafilters on $I$ that are countably complete
only when the cardinality of $I$ is really ``large", in fact, 
at least as large as the first measurable cardinal.\footnote
{~Recall that a cardinal $\kappa$ is \emph{measurable} if there exists a non-principal
ultrafilter on $\kappa$ which is closed under intersections of size smaller than $\kappa$.
One can prove that if $\kappa>\aleph_0$ is measurable then $\kappa$ is 
weakly compact, and is the $\kappa$-th inaccessible cardinal. 
In general, for large cardinals, the notion of countable incompleteness is stronger than non-principality.
(For a treatment of this set-theoretic topic, see \emph{e.g.} \cite[\S 4.2]{ck} and \cite[Ch.10]{je}).} 
In particular, it is consistent with \textsf{ZFC}
that all non-principal ultrafilters are countably incomplete.
In any case, if we work with sets that are not too ``large" 
(\emph{i.e.} of size less than the first measurable cardinal, if any)
then the properties of non-principality and of countable incompleteness are equivalent.
\end{remark}


\begin{theorem}
Let $\W$ be an ultrafilter on $\N\times J$, and assume that
$\pi_2(\W)$ is countably incomplete. Then the following properties
are equivalent:
\begin{enumerate}
\item
$\W$ is a tensor product.
\item
For every $f:J\to \N$, if the ultrafilter $f(\pi_2(\W))$ on $\N$ is non-principal then
$\Gamma(f):=\{(n,j)\in\N\times J\mid n<f(j)\}\in\W$.
\end{enumerate}
\end{theorem}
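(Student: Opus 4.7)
The plan is to handle the two implications separately. The forward direction $(1) \Rightarrow (2)$ should be a routine fiber calculation; the reverse $(2) \Rightarrow (1)$ is the substantive one and I would argue by contrapositive, combining Theorem~\ref{tensorproductsequivalences}$(iii)$ with countable incompleteness of $\pi_2(\W)$ to build an explicit counterexample $f$.

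For $(1) \Rightarrow (2)$, assume $\W = \pi_1(\W) \otimes \pi_2(\W)$, write $\V = \pi_2(\W)$, and let $f \colon J \to \N$ satisfy $f(\V)$ non-principal. Every non-principal ultrafilter on $\N$ contains all cofinite sets, so the fiber $(\Gamma(f))_n = f^{-1}(\{m : m > n\})$ lies in $\V$ for \emph{every} $n \in \N$. Hence $\{n \in \N : (\Gamma(f))_n \in \V\} = \N \in \pi_1(\W)$, and the definition of tensor product gives $\Gamma(f) \in \W$. Note that this direction does not require countable incompleteness.

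For the contrapositive of $(2) \Rightarrow (1)$, suppose $\W \neq \pi_1(\W) \otimes \pi_2(\W)$. The failure of condition $(iii)$ in Theorem~\ref{tensorproductsequivalences} supplies a set $Y \in \W$ whose vertical fibers satisfy $Y_n \notin \V$, equivalently $J \setminus Y_n \in \V$, for every $n \in \N$ (take the $X$ that violates $(iii)$ and set $Y = (\N\times J) \setminus X$). To contradict $(2)$ it suffices to exhibit one $f \colon J \to \N$ with $f(\V)$ non-principal and $\Gamma(f) \notin \W$. I would fix a countable family $\{C_n\}_{n \in \N} \subseteq \V$ witnessing countable incompleteness, so $\bigcap_n C_n = \emptyset$, set $A_n := \bigcap_{k \leq n}\bigl(C_k \cap (J \setminus Y_k)\bigr) \in \V$, and define $f(j) := \min\{n \in \N : j \notin A_n\}$, well-defined since $\bigcap_n A_n = \emptyset$.

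Two verifications then close the argument: since the $A_n$ are decreasing, $f^{-1}(\{m > n\}) = A_n \in \V$, so $f(\V)$ contains every cofinite subset of $\N$ and is non-principal; and for every $(n,j) \in Y$ one has $j \in Y_n$, hence $j \notin J \setminus Y_n \supseteq A_n$, so $f(j) \leq n$ and $(n,j) \notin \Gamma(f)$. Therefore $\Gamma(f) \cap Y = \emptyset$, and since $Y \in \W$ we obtain $\Gamma(f) \notin \W$, as required. The main obstacle is precisely this simultaneous engineering step: packaging the two countable families $\{J \setminus Y_n\}$ (from the failure of tensor-product-ness) and $\{C_n\}$ (from countable incompleteness) into a single decreasing sequence against which $f$ can be defined so that $f$ pushes $\V$ to a non-principal ultrafilter while $\Gamma(f)$ is simultaneously forced out of $\W$. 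Countable incompleteness of $\V$ is exactly what makes this balancing act possible.
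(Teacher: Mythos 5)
Your proof is correct and follows essentially the same route as the paper: the forward direction is the identical fiber computation, and for the converse your function $f(j)=\min\{n: j\notin A_n\}$, built from the fibers of the offending set together with a decreasing family witnessing countable incompleteness, is the same construction the paper uses with $f(j)=\min\{n: n\ge\varphi(j)\ \text{or}\ (n,j)\notin X\}$ (the sets $\varphi^{-1}((n,+\infty))$ playing the role of your $C_n$). The only difference is the cosmetic one of arguing by contrapositive rather than directly verifying condition $(iii)$ of Theorem \ref{tensorproductsequivalences}.
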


\begin{proof}
$(1)\Rightarrow(2)$. 
This implication does not need any assumption on $\pi_2(\W)$.
Let $\W$ be a tensor product. By $(ii)$ in the previous theorem
it is $\W=\pi_1(\W)\otimes\pi_2(\W)$.
Now let $f:J\to\N$, and assume that $f(\pi_2(\W))$ is a non-principal ultrafilter on $\N$.
Then for all $n\in\N$ we have
$\{k\in\N\mid n<k\}\in f(\pi_2(\W))$. This means that
$f^{-1}(\{k\in\N\mid n<k\})=\Gamma(f)_n\in\pi_2(\W)$.
Since $\{n\in\N\mid\Gamma(f)_n\in\pi_2(\W)\}=\N\in\pi_1(\W)$, we can conclude that
$\Gamma(f)\in\pi_1(\W)\otimes\pi_2(\W)=\W$.

\smallskip
$(2)\Rightarrow(1)$.
We show that $\W$ is a tensor product by using $(iii)$ of 
Theorem \ref{tensorproductsequivalences}. So, let us assume that
$X\subseteq\N\times J$ is such that $X_n\in\pi_2(\W)$
for all $n\in\N$. We have to show that $X\in\W$.
Since $\pi_2(\W)$ is a countably incomplete ultrafilter on $J$,
we can pick a function $\varphi:J\to\N$ such that $\varphi(\pi_2(\W))$ is non-principal.
Then define $f:J\to\N$ by setting:
$$f(j):=\min\{n\in\N\mid n\ge\varphi(j)\ \text{or}\ (n,j)\notin X\}.$$
We observe that also the ultrafilter $f(\pi_2(\W))$ is non-principal.
Indeed, for every $n\in\N$, we have 
$\Lambda_n:=\{j\in J\mid n<\varphi(j)\}=\varphi^{-1}((n,+\infty))\in\pi_2(\W)$,
since the cofinite set $(n,+\infty)\in\varphi(\pi_2(\W))$.
By our assumptions $X_n\in\pi_2(\W)$,
and so also $X_n\cap\Lambda_n=\{j\in J\mid n<\varphi(j)\ \text{and}\ (n,j)\in X\}\in\pi_2(\W)$.
Now notice that
$X_n\cap\Lambda_n\subseteq\{j\in J\mid n<f(j)\}=f^{-1}((n,+\infty))\in\pi_2(\W)$,
and hence $(n,+\infty)\in f(\pi_2(\W))$. This completes the proof
that $f(\pi_2(\W))$ is non-principal and so, by the hypothesis,
$\Gamma(f)\in\W$. Finally, note that
$\Gamma(f)\subseteq\{(n,j)\in\N\times J\mid n<\varphi(j)\ \text{and}\ (n,j)\in X\}\subseteq X$,
and therefore we can conclude that $X\in\W$, as desired.
\end{proof}

\begin{remark}
We observe that the arguments used in the previous proof do not work
if one replaces $\N$ with an infinite ordered set $(I,<)$ not isomorphic to $(\N,<)$.
Indeed, notice first that the very definition of $f$ assumes the existence of a least element,
and assuming this for arbitrary subsets is the same as asking for $(I,<)$ to be well-ordered.
Besides, in the last part of the proof the following property is assumed: 
``For every $n\in\N$ and for every non-principal ultrafilter $\V$ on $\N$,
the end-segment $(n,+\infty)\in\V$." If the infinite well-ordered set $(I,<)$ is
not isomorphic to $(\N,<)$ then there exists $\xi\in I$ such that
its initial segment $S_\xi=\{x\in I\mid x<\xi\}$ is infinite.
But then there exist non-principal ultrafilters $\V$ on $I$ such that $S_\xi\in\V$,
and hence the complement $I\setminus S_\xi=(\xi,+\infty)\notin\V$.
\end{remark}


\begin{notation}
We denote by $\pi_{1,2},\pi_{2,3},\pi_{1,3}$
the projections defined on a triple Cartesian product $I_1\times\ I_2\times I_3$
where $\pi_{1,2}(i_1,i_2,i_3)=(i_1,i_2)$, $\pi_{2,3}(i_1,i_2,i_3)=(i_2,i_3)$,
and $\pi_{1,3}(i_1,i_2,i_3)=(i_1,i_3)$.
\end{notation}

\begin{corollary}
Let $\W,\W'$ be ultrafilters on $\N\times J$ where $\pi_1(\W),\pi_1(\W')$ are countably incomplete.
If $\W$ is a tensor product and there is an ultrafilter $\Zc$ on $\N\times\N\times I$
such that:
\begin{itemize}
\item
$\pi_{1,3}(\Zc)=\W'$, 
\item
$\pi_{2,3}(\Zc)=\W$,
\item
$\{(n',n,j)\in\N\times\N\times J\mid n'<n\}\in\Zc$,
\end{itemize}
then also $\W'$ is a tensor product.
\end{corollary}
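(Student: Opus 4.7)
The plan is to verify, for $\W'$, condition $(2)$ of the previous theorem. Thus, fixing an arbitrary $f\colon J\to\N$ with $f(\pi_2(\W'))$ non-principal, I must show $\Gamma(f)=\{(n,j)\in\N\times J\mid n<f(j)\}\in\W'$; combined with the countable incompleteness assumption on $\pi_2(\W')$, the direction $(2)\Rightarrow(1)$ of the previous theorem then gives that $\W'$ is a tensor product.

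The first step is the elementary observation that $\pi_2(\W)=\pi_2(\W')$: both ultrafilters push forward via $\Zc$ to the same ultrafilter on $J$, namely the image of $\Zc$ under the third-coordinate projection, factored through either $\pi_{2,3}$ or $\pi_{1,3}$. Call this common ultrafilter $\V$. Since the $f$ I fixed satisfies $f(\V)$ non-principal and $\W$ is by hypothesis a tensor product with $\pi_2(\W)=\V$, the direction $(1)\Rightarrow(2)$ of the previous theorem (which required no assumption on $\pi_2(\W)$) yields $\Gamma(f)\in\W$. Pulling back along $\pi_{2,3}$ and using $\pi_{2,3}(\Zc)=\W$, this is equivalent to $\{(n',n,j):n<f(j)\}\in\Zc$.

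The heart of the proof, and the only point requiring an actual idea, is the use of the ordering hypothesis $\{(n',n,j):n'<n\}\in\Zc$. Intersecting it with the set just obtained, the filter property of $\Zc$ gives $\{(n',n,j):n'<n<f(j)\}\in\Zc$; by transitivity this set is contained in $\{(n',n,j):n'<f(j)\}=\pi_{1,3}^{-1}(\Gamma(f))$, so upward closure places $\pi_{1,3}^{-1}(\Gamma(f))\in\Zc$. By definition of image ultrafilter this reads $\Gamma(f)\in\pi_{1,3}(\Zc)=\W'$, completing the verification of $(2)$. No serious obstacle arises: once the common $J$-projection is identified and the tensor-product characterisation is applied to $\W$, the ordering inequality supplied by $\Zc$ is precisely the device that converts an upper bound on the second $\N$-coordinate (provided by $\W$) into one on the first (needed by $\W'$).
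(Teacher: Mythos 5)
Your proof is correct and follows essentially the same route as the paper's: apply the characterisation $\Gamma(f)\in\W$ to the tensor product $\W$, pull back along $\pi_{2,3}$ into $\Zc$, intersect with the ordering set $\{(n',n,j)\mid n'<n\}$, enlarge to $\pi_{1,3}^{-1}(\Gamma(f))$, and push forward to $\W'$. You are in fact slightly more careful than the paper, which leaves implicit the (necessary) identification $\pi_2(\W)=\pi_2(\W')=\pi_3(\Zc)$ and asserts $\Gamma(f)\in\W$ for every $f$ rather than only for those $f$ with $f(\pi_2(\W))$ non-principal.
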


\begin{proof}
Since $\W$ is a tensor product,
for every $f:J\to\N$ the set
$$\Gamma(f)=\{(n,j)\mid n<f(j)\}\in\W.$$ 
By the hypotheses,
it follows that $\Lambda:=\{(n',n,j)\mid n'<n\ \text{and}\ n<f(j)\}\in\Zc$.
But then also the superset $\Lambda':=\{(n',n,j)\mid n'<f(j)\}\in\Zc$,
and hence $\pi_{1,2}(\Lambda')=\Gamma(f)\in\W'$.
\end{proof}

The following proposition lists some general properties of tensor products.

\begin{proposition}\label{tensorproperties}
\


\begin{enumerate}
\item
If the ultrafilter $\W$ on $I\times J$ is such that $\pi_1(\W)$
or $\pi_2(\W)$ is principal, then $\W$ is a tensor product.
\item
If $f_\ell:I_\ell\to J_\ell$ and $\U_\ell$ is an ultrafilter on $I_\ell$ for $\ell=1,\ldots,k$ then 
$$f_1(\U_1)\otimes\ldots\otimes f_k(\U_k)=
(f_1,\ldots,f_k)(\U_1\otimes\ldots\otimes\U_k)$$ 
where $(f_1,\ldots,f_k):I_1\times\ldots\times I_k\to J_1\times\ldots J_k$ is
the function $(i_1,\ldots,i_k)\mapsto(f_1(i_1),\ldots,f_k(i_k))$.
\item
Let $\Zc$ be an ultrafilter on $I\times I'\times J$.
If $\pi_{1,3}(\Zc)$ and $\pi_{2,3}(\Zc)$ are tensor products on $I\times J$
and $I'\times J$ respectively, then $\Zc=\pi_1(\Zc)\otimes\pi_2(\Zc)\otimes\pi_3(\Zc)$ 
is a tensor product on $I\times I'\times J$.
\end{enumerate}
\end{proposition}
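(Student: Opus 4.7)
For (1), I would focus on the case where $\pi_1(\W)$ is principal, concentrated at some $i_0\in I$; the other case is symmetric. Since $\{i_0\}\times J=\pi_1^{-1}(\{i_0\})$ belongs to $\W$, I can verify condition $(iii)$ of Theorem \ref{tensorproductsequivalences} directly: whenever $X\subseteq I\times J$ satisfies $X_i\in\pi_2(\W)$ for all $i$, the intersection $(\{i_0\}\times J)\cap\pi_2^{-1}(X_{i_0})=\{i_0\}\times X_{i_0}$ lies in $\W$, and since this is a subset of $X$, we obtain $X\in\W$, so $\W=\pi_1(\W)\otimes\pi_2(\W)$.

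For (2), I would proceed by induction on $k$, the case $k=2$ being the crux. Here one unwinds definitions: for $Y\subseteq J_1\times J_2$ the vertical fibre $((f_1,f_2)^{-1}(Y))_{i_1}$ equals $f_2^{-1}(Y_{f_1(i_1)})$, so by the definition of image ultrafilter this belongs to $\U_2$ iff $Y_{f_1(i_1)}\in f_2(\U_2)$; applying the image construction once more to the outer coordinate turns the resulting condition $\{i_1:Y_{f_1(i_1)}\in f_2(\U_2)\}\in\U_1$ into $\{j_1:Y_{j_1}\in f_2(\U_2)\}\in f_1(\U_1)$, i.e. $Y\in f_1(\U_1)\otimes f_2(\U_2)$. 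The inductive step then reduces to this base case via the associativity of $\otimes$ and the identification of $(f_1,\dots,f_k)$ with $(f_1,\dots,f_{k-1})\times f_k$ under the natural isomorphism of the Cartesian products.

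For (3), set $\U=\pi_1(\Zc)$, $\U'=\pi_2(\Zc)$ and $\V=\pi_3(\Zc)$; clause $(ii)$ of Theorem \ref{tensorproductsequivalences} applied to the hypothesis yields $\pi_{1,3}(\Zc)=\U\otimes\V$ and $\pi_{2,3}(\Zc)=\U'\otimes\V$. My plan is to show $\Zc=(\U\otimes\U')\otimes\V$ by checking condition $(iii)$ of that theorem for $\Zc$ viewed on $(I\times I')\times J$: it suffices to verify that any $X\subseteq I\times I'\times J$ with $X^{(i,i')}:=\{j:(i,i',j)\in X\}\in\V$ for all $(i,i')$ must lie in $\Zc$. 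For such $X$, applying $(iii)$ to the tensor product $\pi_{1,3}(\Zc)$ gives that each $i'$-slice $X^{i'}=\{(i,j):(i,i',j)\in X\}$ belongs to $\pi_{1,3}(\Zc)$, so $\pi_{1,3}^{-1}(X^{i'})\in\Zc$; symmetrically $\pi_{2,3}^{-1}(X^i)\in\Zc$ for each $i\in I$. I expect the main difficulty to lie in combining these parameter-indexed families of sets in $\Zc$ into the single conclusion $X\in\Zc$, since $\Zc$ is only finitely intersective; the argument must exploit the fact that both given tensor decompositions share the common second factor $\V$. A natural route is first to establish $\pi_{1,2}(\Zc)=\U\otimes\U'$—again via clause $(iii)$ applied to $\pi_{1,2}(\Zc)$ on $I\times I'$, using the slice-wise information above—after which the equality $\Zc=\pi_{1,2}(\Zc)\otimes\V$ follows by one further application of clause $(iii)$ to $\Zc$ on $(I\times I')\times J$, yielding the desired triple-tensor decomposition.
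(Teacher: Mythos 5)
Your treatments of parts (1) and (2) are correct and essentially the paper's. For (1) you verify clause $(iii)$ of Theorem \ref{tensorproductsequivalences} where the paper uses $(iv)$; note only that the case ``$\pi_2(\W)$ principal'' is not literally symmetric, since $(iii)$ and $(iv)$ are not coordinate-symmetric, but it is equally easy (if every $X_i\in\U_{j_0}$ then $I\times\{j_0\}\subseteq X$ and $I\times\{j_0\}=\pi_2^{-1}(\{j_0\})\in\W$). For (2) your definition-chase gives the set equality directly, whereas the paper routes through clause $(iv)$; both hinge on the same fibre identity $((f_1,f_2)^{-1}(Y))_{i_1}=f_2^{-1}(Y_{f_1(i_1)})$ and both reduce the general case to $k=2$ by associativity.

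Part (3) is where the proposal breaks down, and the gap you flag is not closable. The intermediate claim you would need, $\pi_{1,2}(\Zc)=\pi_1(\Zc)\otimes\pi_2(\Zc)$, does not follow from the hypotheses: take $I=I'=J=\N$, non-principal ultrafilters $\U,\V$ on $\N$, and let $\Zc$ be the image of $\U\otimes\V$ under $(n,m)\mapsto(n,n,m)$. Then $\pi_{1,3}(\Zc)=\pi_{2,3}(\Zc)=\U\otimes\V$ are tensor products, yet $\pi_{1,2}(\Zc)$ concentrates on the diagonal of $\N\times\N$, which never belongs to $\U\otimes\U$ when $\U$ is non-principal; hence $\pi_{1,2}(\Zc)\ne\pi_1(\Zc)\otimes\pi_2(\Zc)$ and in fact $\Zc\ne\pi_1(\Zc)\otimes\pi_2(\Zc)\otimes\pi_3(\Zc)$, so the stated conclusion itself fails for this $\Zc$. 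For comparison, the paper's own proof proceeds differently: it checks clause $(iv)$ for the grouping $(I\times I')\times J$ by choosing $x$ with $[\pi_{1,3}(Y)]_x\in\pi_3(\Zc)$ and $x'$ with $[\pi_{2,3}(Y)]_{x'}\in\pi_3(\Zc)$ and intersecting; but the asserted identity $[\pi_{1,3}(Y)]_x\cap[\pi_{2,3}(Y)]_{x'}=Y_{(x,x')}$ is only the inclusion $\supseteq$ in general, so that argument at best targets $\Zc=\pi_{1,2}(\Zc)\otimes\pi_3(\Zc)$ and does not reach the triple decomposition either. Your difficulty in combining the families $\pi_{1,3}^{-1}(X^{i'})$ and $\pi_{2,3}^{-1}(X^{i})$ is therefore a symptom of a genuine problem with the claim, not a failure of technique; the most one can hope to prove from these hypotheses is the decomposition relative to the grouping $(I\times I')\times J$.
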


\begin{proof}
(1). Assume first that $\pi_1(\W)=\U_{i_0}$ is the principal ultrafilter on $I$
generated by an element $i_0\in I$. Then $\{i_0\}\times J=\pi_1^{-1}(\{i_0\})\in\W$.
Now, for every $X\in\W$ one has that $X\cap(\{i_0\}\times J)=\{i_0\}\times X_{i_0}\in\W$,
and hence $X_{i_0}\in\pi_2(\W)$.
By $(iv)$ of Theorem \ref{tensorproductsequivalences}
this shows that $\W$ is a tensor product.

When the second projection $\pi_2(\W)=\U_{j_0}$ is the principal
ultrafilter on $J$ generated by an element $j_0$ the proof is similar.
Indeed, in this case $I\times\{j_0\}=\pi_2^{-1}(\{j_0\})\in\W$. Now let us assume that
$X_i\in\pi_2(\W)=\U_{j_0}$ for every $i\in I$.
Then $j_0\in X_i$, \emph{i.e.} $(i,j_0)\in X$, for every $i\in I$.
So, $I\times\{j_0\}\subseteq X$ and we can conclude that $X\in\W$.
By $(iii)$ of Theorem \ref{tensorproductsequivalences}
this shows that $\W$ is a tensor product.

\smallskip
(2). We prove here only the case $k=2$; the general case 
will then easily follow by associativity.
We use the characterization of tensor pairs as given 
by $(iv)$ of Theorem \ref{tensorproductsequivalences}.
Let $Y\in(f_1,f_2)(\U_1\otimes\U_2)$. 
We want to show that there exists $y\in J_1$ such that 
$Y_y\in\pi_2((f_1,f_2)(\U_1\otimes\U_2))$, \emph{i.e.} $Y_y\in f_2(\U_2)$, since
$\pi_2\circ(f_1,f_2)=f_2\circ\pi_2$ and $f_2(\pi_2(\U_1\otimes\U_2))=f_2(\U_2)$.
The set $X:=(f_1,f_2)^{-1}(Y)\in\U_1\otimes\U_2$, and so there exists $x\in I_1$
such that 
$$X_x=\{i\in I_2\mid (f_1(x),f_2(i))\in Y\}=\{i\in I_2\mid f_2(i)\in Y_{f_1(x)}\}=f_2^{-1}(Y_{f_1(x)})\in\U_2.$$
If we let $y=f_1(x)\in J_1$ then we have shown that $Y_y\in f_2(\U_2)$, as desired.

\smallskip
(3). Also in this case we use the characterization of tensor pairs as given 
by $(iv)$ of Theorem \ref{tensorproductsequivalences}.
Let $Y\in\Zc$. We have to show that there exists $(x,x')\in I\times I'$
such that $Y_{(x,x')}=\{j\in J\mid (x,x',j)\in Y\}\in\pi_3(\Zc)$,
where $\pi_3:(i,i',j)\mapsto j$ is the projection on the third coordinate
of $I\times I'\times J$. 
Denote by $\tau:(i,j)\mapsto j$ and by $\vartheta:(i',j)\mapsto j$ the projections of
$I\times J$ and of $I'\times J$, respectively, on the second coordinate.
Note that $\tau\circ \pi_{1,3}=\vartheta\circ \pi_{2,3}=\pi_3$.
Now $\pi_{1,3}(Y)\in\pi_{1,3}(\Zc)$, which is a tensor product on $I\times J$,
and so there exists $x\in I$ such that $[\pi_{1,3}(Y)]_x\in\tau(\pi_{1,3}(\Zc))=\pi_3(\Zc)$.
Similarly, $\pi_{2,3}(Y)\in\pi_{2,3}(\Zc)$, which is a tensor product on $I'\times J$,
and so there exists $x'\in I'$ such that $[\pi_{2,3}(X)]_{x'}\in\vartheta(\pi_{2,3}(\Zc))=\pi_3(\Zc)$.
Then the intersection $[\pi_{1,3}(Y)]_x\cap[\pi_{2,3}(Y)]_{x'}\in\pi_3(\Zc)$.
Finally, observe that $[\pi_{1,3}(Y)]_x\cap[\pi_{2,3}(Y)]_{x'}=\{j\in J\mid (x,x',j)\in Y\}=Y_{(x,x')}$.
\end{proof}

\medskip
\section{Tensor products and Ramsey's Theorem}

In this section we will see that the property isolated in  Ramsey's Theorem, 
a cornerstone of combinatorics, is closely related to tensor products of ultrafilters.

\begin{notation}
For every set $I$ and for every $k\in\N$, denote by 
$$[I]^k=\{A\subseteq X\mid |A|=k\}$$
the set of all $k$-subsets of $I$. When $k=1$ we identify $[I]^1=I$.
\end{notation}

\smallskip
\noindent
\textbf{Theorem} (Ramsey).
\emph{Let $I$ be an infinite set. For every $k$ and for every finite coloring $[I]^k=C_1\cup\ldots\cup C_r$
there exists an infinite $H\subseteq I$ and a color $C_i$ such that $[H]^k\subseteq C_i$.}

\smallskip
An infinite set $H$ as above is called \emph{homogeneous} for the considered coloring.

We will see that tensor powers of non-principal ultrafilters
are ``witnesses" of the partition regularity property given by Ramsey's Theorem.

\begin{notation}
For every ultrafilter $\U$, its $k$-iterated tensor power is denoted by
$$\U^{k\otimes}=\underbrace{\U\otimes\cdots\otimes\U}_{k\ \text{times}}.$$
\end{notation}

\begin{proposition}\label{superdiagonal}
For every non-principal ultrafilter $\U$ on $\N$ and for every $k\in\N$, the
superdiagonal $\Delta^+_k=\{(n_1,\ldots,n_k)\mid n_1<\ldots<n_k\}\in\U^{k\otimes}$
belongs to the $k$-th tensor power of $\U$.
\end{proposition}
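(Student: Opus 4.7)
The plan is to proceed by induction on $k$. The base case $k=1$ is immediate, since $\Delta^+_1 = \N \in \U$ by (U1).

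For the inductive step, assume $\Delta^+_k \in \U^{k\otimes}$. Using associativity of $\otimes$, I identify $\U^{(k+1)\otimes}$ with $\U \otimes \U^{k\otimes}$ via the natural bijection $\N^{k+1} \simeq \N \times \N^k$. By the definition of the tensor product, the claim $\Delta^+_{k+1} \in \U \otimes \U^{k\otimes}$ reduces to verifying
\[
\bigl\{ n_1 \in \N \mid (\Delta^+_{k+1})_{n_1} \in \U^{k\otimes} \bigr\} \in \U;
\]
in fact I will show this set equals all of $\N$. A direct computation identifies each fiber as
\[
(\Delta^+_{k+1})_{n_1} = \Delta^+_k \cap \bigl((n_1, +\infty) \times \N^{k-1}\bigr)
\]
inside $\N^k$. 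The first factor lies in $\U^{k\otimes}$ by the inductive hypothesis. For the second, non-principality of $\U$ is exactly what places the cofinite end-segment $(n_1,+\infty)$ in $\U$, so the rectangle $(n_1,+\infty) \times \N^{k-1}$ belongs to the product family $\U \times \U^{(k-1)\otimes}$, and hence to $\U \otimes \U^{(k-1)\otimes} = \U^{k\otimes}$. Since ultrafilters are closed under finite intersections by (U3), the fiber sits in $\U^{k\otimes}$, completing the induction.

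The argument is a clean induction with no serious obstacle. The only conceptual input is that non-principality of $\U$ is used precisely to guarantee every end-segment $(n_1,+\infty)$ belongs to $\U$; everything else is careful bookkeeping of the associativity identification $\N^{k+1} \simeq \N \times \N^k$ together with a direct application of the fiber criterion in the definition of $\otimes$.
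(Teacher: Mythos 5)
Your proof is correct. It follows the same overall inductive scheme as the paper's (show that every vertical fiber of $\Delta^+_{k+1}$ lies in $\U^{k\otimes}$, so that the relevant set of first coordinates is all of $\N$), but you close the induction differently. The paper strengthens the inductive hypothesis to the statement that $\{(n_1,\ldots,n_k)\mid \ell<n_1<\ldots<n_k\}\in\U^{k\otimes}$ for every $\ell$, so that each fiber of $\Delta^+_{k+1}$ is literally an instance of the hypothesis. You instead keep the unstrengthened hypothesis $\Delta^+_k\in\U^{k\otimes}$ and write the fiber as $\Delta^+_k\cap\bigl((n_1,+\infty)\times\N^{k-1}\bigr)$, putting the rectangle into $\U^{k\otimes}$ via the fact that the tensor product extends the product family, and then invoking closure under finite intersections (U3). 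Both routes are sound and of comparable length; the paper's strengthened hypothesis avoids the extra appeal to the product-family fact, while yours avoids carrying the parameter $\ell$ through the induction. One pedantic point: in the step from $k=1$ to $k=2$ your factor $\N^{k-1}=\N^0$ degenerates, so the identity for the fiber should be read simply as $(\Delta^+_2)_{n_1}=(n_1,+\infty)$; this does not affect the argument.
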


\begin{proof}
We proceed by induction on $k$ and prove that for every $\ell\in\N$
the set 
$$\{(n_1,\ldots,n_k)\mid \ell<n_1<\ldots<n_k\}\in\U^{k\otimes}.$$
Then clearly also the superset $\Delta^+_k\in\U^{k\otimes}$.
The base case $k=1$ is trivial because $\{n_1\in\N\mid \ell<n_1\}$
is cofinite, and hence it belongs to every non-principal ultrafilter $\U=\U^{1\otimes}$.
At the inductive step, by definition
$\Delta^+_{k+1}\in\U^{(k+1)\otimes}=\U\otimes\U^{k\otimes}$ if and only if the set
$\Gamma:=\{n\in\N\mid (\Delta^+_{k+1})_n\in\U^{k\otimes}\}\in\U$, and
this is true because $\Gamma=\N$.
Indeed, for every $\ell\in\N$, by the inductive hypothesis
we know that the following vertical fiber belongs to $\U^{k\otimes}$:
\begin{multline*}
(\Delta^+_{k+1})_\ell=\{(n_1,\ldots,n_k)\in\N^k\mid (\ell,n_1,\ldots,n_k)\in\Delta^+_{k+1}\}=
\\
=\{(n_1,\ldots,n_k)\in\N^k\mid \ell<n_1<\ldots<n_k\}\in\U^{k\otimes}.
\end{multline*}
\end{proof}

We observe that if the property of Ramsey's Theorem holds for a set $I$ then
it also holds for all supersets $I'\supseteq I$. In consequence,
without loss of generality, one can focus on the ``smallest" infinite set of indexes,
namely $I=\N$.

\begin{theorem}\label{UotimesU}
Let $\U$ be a non-principal ultrafilter on $\N$.
For every $X\in\U^{k\otimes}$ there exists an
infinite set $H=\{h_1<h_2<\ldots<h_n<h_{n+1}<\ldots\}$ such that
$$[H]^k:=\{(h_{s_1},\ldots,h_{s_k})\mid s_1<\ldots<s_k\}\subseteq X.$$

Conversely, if $X\subseteq\N^k$ is such that there exists 
an infinite set $H$ with $[H]^k\subseteq X$
then there exists a non-principal ultrafilter $\U$ on $\N$ with $X\in\U^{k\otimes}$.
\end{theorem}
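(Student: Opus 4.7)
The plan is to handle the two directions separately, with the converse being a short consequence of Proposition~\ref{superdiagonal} and the forward direction relying on a recursive construction with a delicate invariant. For the forward direction, given $X \in \U^{k\otimes}$, I build $h_1 < h_2 < \cdots$ by recursion, maintaining at each step $n$ the invariant that for every $0 \leq j \leq k$ and every $1 \leq s_1 < \cdots < s_j \leq n$, the iterated fiber
\[
X_{h_{s_1},\ldots,h_{s_j}} := \{(n_{j+1},\ldots,n_k) \in \N^{k-j} \mid (h_{s_1},\ldots,h_{s_j},n_{j+1},\ldots,n_k) \in X\}
\]
belongs to $\U^{(k-j)\otimes}$ when $j < k$, while the tuple $(h_{s_1},\ldots,h_{s_k})$ itself lies in $X$ when $j = k$. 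Once the recursion is complete, $H = \{h_n \mid n \geq 1\}$ satisfies $[H]^k \subseteq X$ by the $j = k$ clause.

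To pass from step $n$ to step $n+1$, I exploit the fact that for each $j \leq k-1$ and each $1 \leq s_1 < \cdots < s_j \leq n$, the invariant gives $X_{h_{s_1},\ldots,h_{s_j}} \in \U^{(k-j)\otimes}$, which equals $\U \otimes \U^{(k-j-1)\otimes}$ (or simply $\U$ when $j = k-1$). By the definition of the tensor product, the set of ``valid next values'' $m$ --- those $m$ for which either $X_{h_{s_1},\ldots,h_{s_j},m} \in \U^{(k-j-1)\otimes}$ (if $j < k-1$) or $m \in X_{h_{s_1},\ldots,h_{s_{k-1}}}$ (if $j = k-1$) --- is a member of $\U$. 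Intersecting the finitely many such $\U$-sets, one for each $(s_1,\ldots,s_j)$ with $j \leq k-1$ and $s_j \leq n$, together with the cofinite set $(h_n,+\infty) \in \U$, produces a non-empty element of $\U$, and any $h_{n+1}$ chosen from it preserves the invariant. The converse is immediate: given infinite $H$ with $[H]^k \subseteq X$, extend the filter generated by $H$ together with the cofinite subsets of $\N$ to a non-principal ultrafilter $\U$; then $H \in \U$, a trivial induction on $k$ gives $H^k \in \U^{k\otimes}$, and Proposition~\ref{superdiagonal} gives $\Delta^+_k \in \U^{k\otimes}$, so $[H]^k = H^k \cap \Delta^+_k \in \U^{k\otimes}$, whence $X \in \U^{k\otimes}$ by property (U2).

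The main obstacle is the bookkeeping in the forward construction: one must verify that the invariant refers only to iterated fibers fixing the \emph{first} $j$ coordinates of $\N^k$, which aligns with the recursive definition of $\U^{k\otimes}$ and ensures the ``set of valid next values'' is in $\U$. The same strategy with fibers at arbitrary coordinate positions would fail, since $\U^{k\otimes}$ is not symmetric under coordinate permutations.
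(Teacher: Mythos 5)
Your proof is correct. For the forward direction you give a direct recursive construction for arbitrary $k$, maintaining the invariant that every iterated fiber $X_{h_{s_1},\ldots,h_{s_j}}$ along an increasing tuple of already-chosen indices lies in $\U^{(k-j)\otimes}$, and choosing $h_{n+1}$ from a finite intersection of $\U$-sets; this is exactly the natural generalization of the argument the paper itself gives, but the paper only carries it out as a ``warm-up'' in the case $k=2$ and, for general $k$, instead invokes the much heavier Theorem~\ref{main} (of which this theorem is the special case $\Phi=\{c\}$ with $c$ constant). Your observation that the fibers must be taken along an \emph{initial} segment of coordinates, matching the recursive definition of $\U^{k\otimes}$, is precisely the point that makes the bookkeeping go through. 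For the converse, the paper (again only for $k=2$) verifies directly that $[H]^2\in\U\otimes\U$ for any $\U$ extending the tails of $H$; your route via $[H]^k=H^k\cap\Delta^+_k$, combining $H^k\in\U^{k\otimes}$ with Proposition~\ref{superdiagonal}, is a clean and slightly slicker way to handle general $k$. The trade-off is the expected one: your argument is self-contained and avoids the notational weight of Theorem~\ref{main}, while the paper's deferral buys the far more general statement about arbitrary families of tensor products indexed by surjections.
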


\begin{proof}
This is a particular case of Theorem \ref{main} that will be stated in Section 6.
However, as a warm-up towards the general result, we give here a direct proof 
of the above property in the case $k=2$. 

Let $\U$ be a non-principal ultrafilter on $\N$, and
let $X\in\U\otimes\U$. By induction on $n$, 
we define elements $h_n\in\N$ in such a way that:
\begin{itemize}
\item
$h_n\ne h_i$ for all $i<n$,
\item
The vertical fiber $X_{h_n}\in\U$,
\item
$(h_i,h_j)\in X$ for all $i<j\le n$.
\end{itemize}

By definition, $X\in\U\otimes\U$ if and only if $X_\U:=\{i\in I\mid X_i\in\U\}\in\U$.
At the base step pick an element $h_1\in X_\U$, so that $X_{h_1}\in\U$. 
At the inductive step $n+1$, we observe that 
$\Gamma_n:=X_\U\cap\bigcap_{i=1}^n X_{h_i}\in\U$.
Since $\U$ is non-principal, the set $\Gamma_n$ is infinite and we can pick
$h_{n+1}\in\Gamma_n$ such that $h_{n+1}>h_n$.
It is readily verified that the set $H=\{h_n\mid n\in\N\}$
satisfies the desired properties.

Conversely, let $X\subseteq\N\times\N$ and let
$H=\{h_1<h_2<\ldots<h_n<h_{n+1}<\ldots\}$ be an infinite set
such that $[H]^2\subseteq X$.
The family $\{\{h_n\mid n\ge k\}\mid k\in\N\}$ has the finite intersection property,
and so we can pick an ultrafilter $\U$ that extends it. Note that such a $\U$ is 
non-principal. Finally, it is directly verified from the definitions that
$[H]^2=\{(h_n,h_m)\mid n<m\}\in\U\otimes\U$, and hence also the superset $X\in\U\otimes\U$.
\end{proof}

As a straight application of Theorem \ref{UotimesU}
we now obtain an ultrafilter proof of Ramsey's Theorem
(see \emph{e.g.} \cite[Theorem 3.3.]{ck}).

In the following we will identify each unordered $k$-tuple $\{n_1<\ldots<n_k\}\in[\N]^k$
with the corresponding ordered $k$-tuple $(n_1,\ldots,n_k)\in\N^k$ where components
are arranged in increasing order, so that
$$[\N]^k=\Delta^+_k:=\{(n_1,\ldots,n_k)\in\N^k\mid n_1<\ldots<n_k\}.$$

%

\begin{theorem}[Ramsey]
Let $I$ be an infinite set and let $k\in\N$. For every finite coloring
$[I]^k=C_1\cup\ldots\cup C_r$ there exists an infinite set $H$ 
and a color $C_i$ such that $[H]^k\subseteq C_i$.
\end{theorem}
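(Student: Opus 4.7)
The plan is to use a non-principal ultrafilter on $\N$ as a ``selector'' for the monochromatic color class, and then invoke Theorem \ref{UotimesU} to extract an infinite homogeneous set. First, as the text already observes, the statement is monotone in $I$: any homogeneous set for a coloring of $[I']^k$ with $I' \supseteq I$ restricts to give the conclusion for the coloring of $[I]^k$ inherited from it, and conversely one can extend $[I]^k$ to $[I']^k$ by coloring new $k$-sets arbitrarily. Hence without loss of generality I work with $I = \N$.

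Fix any non-principal ultrafilter $\U$ on $\N$. By Proposition \ref{superdiagonal}, the superdiagonal $\Delta^+_k \in \U^{k\otimes}$. Under the identification $[\N]^k = \Delta^+_k$, the given finite coloring partitions $\Delta^+_k$ into $C_1 \cup \ldots \cup C_r$. Now the key observation is the partition regularity of ultrafilters: for any finite partition of a set in $\U^{k\otimes}$, exactly one piece lies in $\U^{k\otimes}$. This is immediate from properties (U3) and (U4) by induction on $r$ (if none of $C_1, \ldots, C_{r-1}$ is in $\U^{k\otimes}$, then all their complements are, hence so is $C_r = \Delta^+_k \cap C_1^c \cap \ldots \cap C_{r-1}^c$). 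So pick the index $i$ with $C_i \in \U^{k\otimes}$.

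Finally, apply Theorem \ref{UotimesU} to $C_i$: there exists an infinite set $H = \{h_1 < h_2 < \ldots\}$ such that $[H]^k \subseteq C_i$, which is precisely the desired homogeneous set.

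There is essentially no hard step here, because the heavy lifting has already been done in Proposition \ref{superdiagonal} and Theorem \ref{UotimesU}. The only thing worth spelling out is the partition-regularity argument for ultrafilters; everything else is a direct application of previously established results.
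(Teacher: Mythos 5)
Your proof is correct and follows essentially the same route as the paper: reduce to $I=\N$, use Proposition \ref{superdiagonal} to place $\Delta^+_k$ in $\U^{k\otimes}$, select a color $C_i\in\U^{k\otimes}$ by the partition property of ultrafilters, and apply Theorem \ref{UotimesU}. The only difference is that you spell out the partition-regularity step that the paper compresses into ``by the property of ultrafilter,'' which is a reasonable expository addition.
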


\begin{proof}
As already observed, without loss of generality we can assume $I=\N$.
Pick a non-principal ultrafilter $\U$ on $\N$.
By Proposition \ref{superdiagonal}, the set $[\N]^k=\Delta^+_k\in\U^{k\otimes}$
and so, by the property of ultrafilter, there exists a color $C_i\in\U^{k\otimes}$. 
Then apply the first part of Theorem \ref{UotimesU} to find an infinite set $H$
such that $[H]^2\subseteq C_i$.
\end{proof}

The consequences of Ramsey's Theorem are far-reaching, and there is
an extensive bibliography on its applications in diverse areas of mathematics. 
Here we give just one relevant example in the context of calculus,
that we did not find explicitly mentioned in the literature.

%


\begin{proposition}[Weierstrass]
Every bounded sequence $(a_n)$ of real numbers
admits a Cauchy subsequence $(a_{n_k})$.
\end{proposition}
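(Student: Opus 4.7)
The plan is to deduce the statement directly from Ramsey's Theorem just proved (with $k=2$ and $r=2$), combined with the elementary fact that a bounded monotone sequence of reals is Cauchy. The whole argument is essentially the classical Erd\H{o}s--Szekeres observation recast as an application of the ultrafilter proof of Ramsey.

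First I would introduce a $2$-colouring of $[\N]^2=\Delta^+_2$ by setting
$$C_1=\{(n,m)\in\Delta^+_2\mid a_n\le a_m\},\qquad C_2=\{(n,m)\in\Delta^+_2\mid a_n>a_m\},$$
so that pairs in $C_1$ correspond to ``non-decreasing steps'' of the sequence and pairs in $C_2$ to ``strictly decreasing steps''. Ramsey's Theorem supplies an infinite homogeneous set $H=\{h_1<h_2<\ldots\}\subseteq\N$. If $[H]^2\subseteq C_1$ the subsequence $(a_{h_k})$ is non-decreasing; if $[H]^2\subseteq C_2$ it is strictly decreasing. In either case $(a_{h_k})$ is bounded (since $(a_n)$ was) and monotone.

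The last step is to observe that a bounded monotone real sequence is automatically Cauchy. Assuming for definiteness that $(a_{h_k})$ is non-decreasing, I would put $L:=\sup_k a_{h_k}$, which is finite by boundedness, and given $\varepsilon>0$ pick $K$ with $a_{h_K}>L-\varepsilon$. For every $k,k'\ge K$ monotonicity gives $a_{h_K}\le a_{h_k},a_{h_{k'}}\le L$, whence $|a_{h_k}-a_{h_{k'}}|<\varepsilon$. The strictly decreasing case is symmetric, using the infimum in place of the supremum.

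I do not anticipate any real obstacle: the combinatorial content is entirely encapsulated in Ramsey's Theorem, and the only real-analytic ingredient is the least-upper-bound property of $\R$. The conceptual point worth stressing in the exposition is simply that the dichotomy ``$a_n\le a_m$ vs.\ $a_n>a_m$'' on pairs $n<m$ turns monotone subsequences into homogeneous sets, so that Ramsey immediately produces the desired monotone—and therefore Cauchy—subsequence.
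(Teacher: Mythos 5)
Your proof is correct, but it takes a genuinely different route from the paper's. You colour \emph{pairs} by the order relation ($a_n\le a_m$ versus $a_n>a_m$), extract a monotone subsequence in the spirit of Erd\H{o}s--Szekeres, and then invoke the elementary fact that a bounded monotone real sequence is Cauchy. The paper instead colours \emph{triples} by the metric condition $|a_n-a_m|\le\frac{1}{k}$ versus $|a_n-a_m|>\frac{1}{k}$ (the first coordinate $k$ calibrating the tolerance), and rules out the bad colour by a pigeonhole count: a homogeneous set for $C_2$ would pack too many $\frac{1}{n_1}$-separated points into a bounded interval. Your argument is simpler in that it only needs Ramsey for $k=2$ and avoids the counting step; what it costs is reliance on the total order of $\R$ (monotonicity has no analogue for bounded sequences in $\R^d$ or in a totally bounded metric space, where the paper's purely metric triple-colouring would still go through). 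A small remark on your last step: you do not actually need the least-upper-bound property there --- if a bounded non-decreasing sequence failed to be Cauchy, one could find infinitely many disjoint jumps of size at least $\varepsilon$ and contradict boundedness directly --- so your proof, like the paper's, really uses only Ramsey plus finitary estimates.
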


\begin{proof}
Consider the partition $[\N]^3=C_1\cup C_2$ where
$$C_1=\left\{\{k<n<m\}\,\Big|\, |a_n-a_m|\le\frac{1}{k}\right\}\ \text{and}\ 
C_2=\left\{\{k<n<m\}\,\Big|\, |a_n-a_m|>\frac{1}{k}\right\}.$$
By Ramsey's Theorem there exists an increasing sequence $(n_s)$
that is homogenous for the partition.
We observe that $[(n_s)]^3\subseteq C_2$ is not possible,
as otherwise $(a_{n_s})$ would be unbounded.
To see this, assume for the sake of contradiction that
there exists an interval $I$ of finite length $M\in\N$ such that
$(a_{n_s})\subseteq I$, and consider the set $X=\{a_{n_s}\mid k=2,\ldots,Mn_1+2\}$.
Since $X$ contains $Mn_1+1$ points and is included in an interval of length $M$,
there must be at least two distinct points $x,y\in X$ such that 
$|x-y|\le\frac{M}{Mn_1}=\frac{1}{n_1}$.
On the other hand, we also have 
$|x-y|=|a_{n_k}-a_{n_h}|>\frac{1}{n_1}\ge|x-y|$, a contradiction.

So, it must be $[(n_k)]^2\subseteq C_1$, and therefore the subsequence $(a_{n_k})$ is Cauchy.
Indeed, given $\varepsilon>0$ pick $k$ such that $\frac{1}{n_k}<\varepsilon$.
Then for all $s>t>k$ we have 
$|a_{n_s}-a_{n_t}|\le\frac{1}{n_k}<\varepsilon$.
\end{proof}

The property of Theorem \ref{UotimesU} can be reformulated in topological terms.

Recall that the topological space of ultrafilters $\beta\N^k:=\{\W\mid \W\ \text{ultrafilter on}\ \N^k\}$
has a base of (cl)open sets given by the family of sets $\mathcal{O}_X:=\{\W\in\beta\N^k\mid X\in\W\}$
for $X\subseteq\N^k$ (see, \emph{e.g.}, \cite[Chapter 3]{hs}).

\begin{theorem}
Let $\W\in\beta\N^k$. The following properties are equivalent:
\begin{enumerate}
\item
$\W$ is a ``Ramsey's witness", \emph{i.e.} for every $X\in\W$ there exists an infinite $H$ with $[H]^k\subseteq X$.
\item
$\W$ belongs to the topological closure of the space of non-trivial tensor powers
$\{\U^{k\otimes}\mid \U\in\beta\N\setminus\N\}$.
\end{enumerate}
\end{theorem}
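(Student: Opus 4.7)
The plan is to show that the two properties each correspond, via Theorem \ref{UotimesU}, to the same statement phrased topologically versus combinatorially. The clopen basis of $\beta\N^k$ consists of the sets $\mathcal{O}_X$ for $X\subseteq\N^k$, so $\W$ lies in the closure of a set $S\subseteq\beta\N^k$ if and only if, for every $X\in\W$, the neighborhood $\mathcal{O}_X$ meets $S$. Applying this with $S=\{\U^{k\otimes}\mid\U\in\beta\N\setminus\N\}$ reduces the theorem to: for every $X\in\W$, there is a non-principal $\U$ on $\N$ with $X\in\U^{k\otimes}$ if and only if for every $X\in\W$ there is an infinite $H$ with $[H]^k\subseteq X$. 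This equivalence is precisely the content of the two halves of Theorem \ref{UotimesU}.

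Concretely, for $(2)\Rightarrow(1)$, I fix $X\in\W$ and use that $\mathcal{O}_X$ is an open neighborhood of $\W$; by hypothesis it contains some $\U^{k\otimes}$ with $\U$ non-principal, so $X\in\U^{k\otimes}$, and the first part of Theorem \ref{UotimesU} produces an infinite $H$ with $[H]^k\subseteq X$.

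For $(1)\Rightarrow(2)$, since the sets $\mathcal{O}_X$ with $X\in\W$ form a neighborhood base at $\W$, it suffices to show that each such $\mathcal{O}_X$ meets $\{\U^{k\otimes}\mid\U\in\beta\N\setminus\N\}$. Given $X\in\W$, the Ramsey-witness property supplies an infinite $H$ with $[H]^k\subseteq X$, and the second (converse) part of Theorem \ref{UotimesU} then yields a non-principal $\U$ on $\N$ with $X\in\U^{k\otimes}$, i.e.\ $\U^{k\otimes}\in\mathcal{O}_X$.

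There is essentially no obstacle here: the entire content is already packaged in Theorem \ref{UotimesU}, and the only thing to verify is the standard description of the basic (cl)open neighborhoods of a point in $\beta\N^k$. The only small point to check, in the forward direction of Theorem \ref{UotimesU} as invoked, is that the $\U$ produced is genuinely non-principal (which is immediate from its construction as an ultrafilter extending the filter of cofinal tails of an infinite $H$).
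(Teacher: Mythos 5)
Your argument is correct and is essentially identical to the paper's own proof: both reduce the statement to Theorem \ref{UotimesU} by observing that the basic clopen sets $\mathcal{O}_X$ with $X\in\W$ form a neighborhood base at $\W$, so that membership in the closure of $\{\U^{k\otimes}\mid\U\in\beta\N\setminus\N\}$ amounts to every $X\in\W$ lying in some non-trivial tensor power. Nothing further is needed.
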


\begin{proof}
Call a set $X\subseteq\N^k$ \emph{Ramsey-large} if there exists an infinite $H$
with $[H]^k\subseteq X$. According to Theorem \ref{UotimesU}, a set $X\subseteq\N^k$ is Ramsey-large
if and only if there exists a non-principal ultrafilter $\U$ on $\N$ with $X\in\U^{k\otimes}$.
Then $\W$ is a Ramsey's witness if and only if every $X\in\W$ is
Ramsey-large if and only if for every basic open neighborhood $\mathcal{O}_X$ of $\W$ there exists
a non-principal ultrafilter $\U$ with $X\in\U^{k\otimes}$; and this last property
precisely means that $\W$ belongs to the topological closure of 
$\{\U^{k\otimes}\mid\U\in\beta \N^k\setminus\N^k\}$.
\end{proof}

\emph{Ramsey's witnesses} on $\N\times\N$ and 
their applications in arithmetic Ramsey theory,
are studied in detail in \cite{dlmmr}.

\smallskip
Theorem \ref{UotimesU} has the following straight generalisation,
also a particular case of the general Theorem \ref{main},
which we will see later.

\begin{theorem}\label{UotimesV}
Let $\U_1,\ldots,\U_k$ be non-principal ultrafilters on $\N$.
For every $X\in\U_1\otimes\ldots\otimes\U_k$ there exist
infinite sets $H_i=\{h_{i,1}<h_{i,2}<\ldots<h_{i,n}<\ldots\}$ for $i=1,\ldots,k$
such that
$$[H_1,\ldots,H_k]^k:=\{(h_{1,s_1},\ldots,h_{k,s_k})\mid s_1< \ldots< s_k\}\subseteq X.$$

Conversely, if $X\subseteq\N^k$ is such that there exist
infinite sets $H_i$ for $i=1,\ldots,k$ with
$[H_1,\ldots,H_k]^k\subseteq X$, then there exist non-principal 
ultrafilters $\U_1,\ldots,\U_k$ on $\N$ such that $X\in\U_1\otimes\ldots\otimes\U_k$.
\end{theorem}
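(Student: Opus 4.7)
The plan is to generalise the argument given in the excerpt for Theorem \ref{UotimesU} at $k=2$ by strengthening the inductive invariant to accommodate multiple distinct ultrafilters and arbitrary arity.

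\emph{Forward direction.} Given $X\in\U_1\otimes\cdots\otimes\U_k$, I build the sequences $H_i=\{h_{i,1}<h_{i,2}<\cdots\}$ by recursion on $n$, maintaining at stage $n$ the invariant that, for every $1\le j\le k$ and every $s_1<\cdots<s_j\le n$, the fiber
\[
X_{(h_{1,s_1},\ldots,h_{j,s_j})}:=\{(i_{j+1},\ldots,i_k)\in\N^{k-j}\mid(h_{1,s_1},\ldots,h_{j,s_j},i_{j+1},\ldots,i_k)\in X\}
\]
belongs to $\U_{j+1}\otimes\cdots\otimes\U_k$, with the convention that for $j=k$ the ``empty'' tensor product simply asserts $(h_{1,s_1},\ldots,h_{k,s_k})\in X$. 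At stage $n$ I pick $h_{1,n},\ldots,h_{k,n}$ in this order; to choose $h_{i,n}$, observe that for each of the finitely many tuples $s_1<\cdots<s_{i-1}<n$, the previous invariant and the definition of tensor product yield $A_{s_1,\ldots,s_{i-1}}:=\{a\in\N\mid X_{(h_{1,s_1},\ldots,h_{i-1,s_{i-1}},a)}\in\U_{i+1}\otimes\cdots\otimes\U_k\}\in\U_i$. Intersecting these finitely many sets with the cofinite tail $\{a>h_{i,n-1}\}$ (which lies in $\U_i$ by non-principality) gives a set in $\U_i$ from which $h_{i,n}$ is picked, validating the new invariant while leaving the old portion intact. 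Passing to the limit $n\to\infty$, the invariant at $j=k$ yields $[H_1,\ldots,H_k]^k\subseteq X$.

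\emph{Converse direction.} Given infinite $H_i\subseteq\N$ with $[H_1,\ldots,H_k]^k\subseteq X$, for each $i$ pick any non-principal ultrafilter $\U_i$ on $\N$ containing $H_i$ (possible since $H_i$ is infinite). I prove by induction on $k$ the slightly stronger claim that $[H_1,\ldots,H_k]^k\in\U_1\otimes\cdots\otimes\U_k$ for \emph{any} such choice of $\U_i$'s, whence $X\in\U_1\otimes\cdots\otimes\U_k$ as a superset. The base case $k=1$ is immediate. For the inductive step, the crucial observation is that for each $h_{1,m}\in H_1$ the $h_{1,m}$-fiber of $[H_1,\ldots,H_k]^k$ equals $[H_2^{>m},\ldots,H_k^{>m}]^{k-1}$, where $H_i^{>m}:=\{h_{i,s}\mid s>m\}$; since $\U_i$ is non-principal and contains $H_i$, it automatically contains every such tail $H_i^{>m}$, so by the inductive hypothesis this fiber lies in $\U_2\otimes\cdots\otimes\U_k$. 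As this holds for every $h_{1,m}\in H_1$ and $H_1\in\U_1$, the definition of tensor product closes the induction.

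\emph{Main obstacle.} The delicate point lies in the forward direction: the invariant must simultaneously track all partial fibers at every level $j<k$, because unfolding one coordinate at a time from $X\in\U_1\otimes\cdots\otimes\U_k$ produces precisely this cascading structure, and without the full invariant the induction collapses after a single step. Once the invariant is correctly stated, each stage amounts to intersecting finitely many ultrafilter sets with a cofinite tail, which is a routine application of the definitions.
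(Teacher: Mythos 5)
Your proof is correct. The paper does not prove Theorem \ref{UotimesV} directly: it derives it as the special case $\Phi=\{id\}$ of Theorem \ref{main}, whose proof occupies Section \ref{sec-proofs}. Your forward direction is essentially that argument stripped of the bookkeeping forced by a general $\Phi$: the recursion maintaining, for every partial tuple $s_1<\cdots<s_j\le n$, membership of the fiber $X_{(h_{1,s_1},\ldots,h_{j,s_j})}$ in $\U_{j+1}\otimes\cdots\otimes\U_k$ is exactly the invariant $(3)_n$ of the paper's proof of Theorem \ref{main}, and the direct generalisation of the $k=2$ warm-up given for Theorem \ref{UotimesU}; each step is indeed just a finite intersection of ultrafilter sets with a cofinite tail. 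Your converse, however, takes a genuinely different route. The paper pushes a single non-principal ultrafilter $\V$ forward along the maps $n\mapsto h_{\ell,n}$ and invokes Proposition \ref{superdiagonal} (that $\Delta^+_k\in\V^{k\otimes}$) together with Proposition \ref{tensorproperties}(2) to land $[H_1,\ldots,H_k]^k$ in the resulting tensor product. You instead take arbitrary non-principal ultrafilters $\U_i\supseteq\{H_i\}$ and prove $[H_1,\ldots,H_k]^k\in\U_1\otimes\cdots\otimes\U_k$ by induction on $k$, via the identity that the $h_{1,m}$-fiber of $[H_1,\ldots,H_k]^k$ equals $[H_2^{>m},\ldots,H_k^{>m}]^{k-1}$. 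This is more elementary (no image ultrafilters, no appeal to the superdiagonal) and yields the slightly stronger conclusion that \emph{every} choice of non-principal ultrafilters containing the respective $H_i$ works; what it does not provide, and what the paper's push-forward construction is designed to provide in the general setting of Theorem \ref{main}, is a single coherent family of ultrafilters serving all $\varphi\in\Phi$ simultaneously --- but for the statement at hand that extra coherence is not needed.
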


In the same way as done with Ramsey's witnesses, one can reformulate in topological terms.


\begin{theorem}
Let $\W\in\beta\N^k$. The following properties are equivalent:
\begin{enumerate}
\item
For every $X\in\W$ there exists infinite sets $H_1,\ldots,H_k$ such that
\\
$[H_1,\ldots,H_k]^k\subseteq X$.
\item
$\W$ belongs to the topological closure of 
the space of non-trivial tensor products
$\{\U_1\otimes\ldots\otimes\U_k\mid \U_1,\ldots,\U_k\in\beta\N\setminus\N\}$.
\end{enumerate}
\end{theorem}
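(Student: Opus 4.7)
The plan is to mimic the proof of the analogous theorem for Ramsey's witnesses, using Theorem \ref{UotimesV} as the combinatorial bridge. Call a set $X\subseteq\N^k$ \emph{grid-large} if there exist infinite $H_1,\ldots,H_k\subseteq\N$ with $[H_1,\ldots,H_k]^k\subseteq X$. Theorem \ref{UotimesV} says exactly that a set $X\subseteq\N^k$ is grid-large if and only if there exist non-principal ultrafilters $\U_1,\ldots,\U_k$ on $\N$ with $X\in\U_1\otimes\ldots\otimes\U_k$. The whole statement is then a topological repackaging of this equivalence, using the fact that the sets $\mathcal{O}_X=\{\W\in\beta\N^k\mid X\in\W\}$ for $X\in\W$ form a neighborhood base at $\W$.

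For $(1)\Rightarrow(2)$, I would take an arbitrary basic open neighborhood $\mathcal{O}_X$ of $\W$, where $X\in\W$. By hypothesis $X$ is grid-large, so the converse direction of Theorem \ref{UotimesV} furnishes non-principal ultrafilters $\U_1,\ldots,\U_k$ on $\N$ such that $X\in\U_1\otimes\ldots\otimes\U_k$, i.e.\ $\U_1\otimes\ldots\otimes\U_k\in\mathcal{O}_X$. Since every basic neighborhood of $\W$ meets the set $\{\U_1\otimes\ldots\otimes\U_k\mid \U_i\in\beta\N\setminus\N\}$, we conclude $\W$ lies in its closure.

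For $(2)\Rightarrow(1)$, I would start with $X\in\W$ and use that $\W$ belongs to the closure in question: the neighborhood $\mathcal{O}_X$ must contain some non-trivial tensor product $\U_1\otimes\ldots\otimes\U_k$ with all $\U_i$ non-principal. Therefore $X\in\U_1\otimes\ldots\otimes\U_k$, and the forward direction of Theorem \ref{UotimesV} produces infinite $H_1,\ldots,H_k$ with $[H_1,\ldots,H_k]^k\subseteq X$, as required.

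I do not foresee a genuine obstacle here: once Theorem \ref{UotimesV} is granted, both implications are immediate consequences of the definition of the base $\{\mathcal{O}_X\}$ and of the closure operator in $\beta\N^k$. The only thing worth being careful about is to invoke the correct halves of Theorem \ref{UotimesV} in each direction (existence of non-principal $\U_i$'s from grid-largeness for $(1)\Rightarrow(2)$; existence of grids from membership in a non-trivial tensor product for $(2)\Rightarrow(1)$), mirroring the structure of the Ramsey-witness theorem proved earlier.
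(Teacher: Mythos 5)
Your proposal is correct and follows exactly the route the paper intends: the paper proves the analogous statement for Ramsey's witnesses by introducing the notion of a ``large'' set, invoking the combinatorial equivalence (there Theorem \ref{UotimesU}, here Theorem \ref{UotimesV}), and translating membership in every basic neighborhood $\mathcal{O}_X$ into membership in the closure, and it explicitly states that the present theorem is obtained ``in the same way.'' Your two displayed implications are just the unfolding of the paper's chain of equivalences, with the correct halves of Theorem \ref{UotimesV} used in each direction.
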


If, instead of considering only tensor powers of a non-principal
ultrafilter $\U$ on $\N$, one also admits tensor products with ultrafilters 
$\W$ on $\N\times\N$ that extend the product family
$\U\times\U:=\{A\times B\mid A,B\in\U\}$, then the corresponding sets 
still have a rich combinatorial structure. 
One example is given by following result that was used in
\cite{dr} to prove ``abundance" of monochromatic exponential triples $\{a, b, b^a\}$
in every finite coloring of $\N$.

\begin{proposition}
Let $\W$ be an ultrafilter on $\N\times\N$ such that
$\pi_1(\W)=\pi_2(\W)=\U$ (equivalently, $\W\supset\U\times\U$). 
Then for every $X\in\U\otimes\W\otimes\U$
there exists an increasing sequence $(a_s)$
such that $(a_i,a_{2j},a_{2j+1},a_k)\in X$
for all $i,j,k$ with $i<2j$ and $2j+1<k$.
\end{proposition}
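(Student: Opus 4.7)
The plan is to adapt the recursive extraction used in the proofs of Theorem \ref{UotimesU} and Theorem \ref{UotimesV} to the three-layer structure of the mixed tensor product $\U\otimes\W\otimes\U = \U\otimes(\W\otimes\U)$. First I would unravel the hypothesis $X\in\U\otimes\W\otimes\U$ into three nested layers of ``large'' witnesses: the set $A := \{a\in\N \mid X_a \in \W\otimes\U\}$ lies in $\U$; for each $a\in A$, the middle slice $B_a := \{(b,c)\in\N^2 \mid (X_a)_{(b,c)} \in \U\}$ lies in $\W$; and for each $a\in A$ and $(b,c)\in B_a$, the final slice $C_{a,b,c} := \{d\in\N \mid (a,b,c,d) \in X\}$ lies in $\U$. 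These three families will drive the construction.

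The sequence $a_1<a_2<\cdots$ will then be built stage by stage. I would pick any $a_1\in A$ at stage $0$. At stage $j\ge1$, assuming I have already chosen $a_1<\cdots<a_{2j-1}$ all lying in $A$ and satisfying the inductive hypothesis, I would pick the pair $(a_{2j},a_{2j+1})$ with $a_{2j-1}<a_{2j}<a_{2j+1}$ inside
\[
E_j \;:=\; \Big(\bigcap_{i<2j}B_{a_i}\Big)\cap(D_j\times D_j),
\qquad
D_j \;:=\; A\cap(a_{2j-1},+\infty)\cap\bigcap_{\substack{j'<j\\ i<2j'}} C_{a_i,a_{2j'},a_{2j'+1}}.
\]
The set $D_j$ is a finite intersection of members of $\U$ together with a cofinite set, so $D_j\in\U$; the hypothesis $\W\supset\U\times\U$ then gives $D_j\times D_j\in\W$, and intersecting with finitely many further $\W$-sets keeps $E_j$ in $\W$, hence nonempty. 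Drawing an increasing pair from $E_j$ simultaneously enforces three things: (i) $a_{2j},a_{2j+1}\in A$ and both exceed $a_{2j-1}$, preserving the induction hypothesis; (ii) both entries lie in every previously fixed $C_{a_i,a_{2j'},a_{2j'+1}}$ with $j'<j$ and $i<2j'$, so all past tuples $(a_i,a_{2j'},a_{2j'+1},a_{2j})$ and $(a_i,a_{2j'},a_{2j'+1},a_{2j+1})$ automatically land in $X$; and (iii) the pair itself belongs to each $B_{a_i}$ with $i<2j$, so the ``next-level'' set $C_{a_i,a_{2j},a_{2j+1}}$ stays in $\U$ and remains available for the stages to come.

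Verification of the target property is then essentially bookkeeping: given $i<2j$ and $k>2j+1$, the pair $(a_{2j},a_{2j+1})$ was picked in $B_{a_i}$, so $C_{a_i,a_{2j},a_{2j+1}}\in\U$; and at the later stage when $a_k$ was chosen, the corresponding $D$-condition placed $a_k$ in exactly this $C$-set, which is the assertion $(a_i,a_{2j},a_{2j+1},a_k)\in X$.

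The hard part will be the extraction from $E_j\in\W$ of a pair with strictly increasing coordinates $a_{2j}<a_{2j+1}$: every other step is routine manipulation of finite intersections inside the appropriate ultrafilter, whereas for the ordering one must exploit in an essential way the hypothesis $\pi_1(\W)=\pi_2(\W)=\U$ and the non-principality of $\U$, which together force both projections of $E_j$ to be infinite and thereby allow the selection of a pair with strictly smaller first coordinate. This is the one place where the compatibility of $\W$ with $\U$ cannot be bypassed.
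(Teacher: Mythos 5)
Your construction is the same as the paper's: you unravel $\U\otimes\W\otimes\U$ into the three layers $A$, $B_a$, $C_{a,b,c}$ (the paper's $X'$, $(X_a)_\U$, $X_{a,b,c}$), pick $a_1\in A$, and then at each stage extract the pair $(a_{2j},a_{2j+1})$ from a $\W$-large intersection of the sets $B_{a_i}$ with a square $D_j\times D_j$ of a $\U$-large set, using $\W\supseteq\U\times\U$ exactly as the paper uses $(X')^2$ and $(X_{a_i,a_{2j'},a_{2j'+1}})^2$. The bookkeeping in your verification paragraph is correct and complete.

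The one point where your argument has a genuine gap is precisely the step you flag as ``the hard part'': you assert that, because both projections of $E_j$ are infinite, one can select a pair in $E_j$ with strictly smaller first coordinate. That inference is invalid. A $\W$-large set can have both projections infinite and still contain no pair $(b,c)$ with $b<c$: take $\W$ to be the image of $\U$ under the diagonal map $n\mapsto(n,n)$. This $\W$ satisfies $\pi_1(\W)=\pi_2(\W)=\U$ and $\W\supseteq\U\times\U$, yet the diagonal $\Delta=\{(n,n)\mid n\in\N\}$ belongs to $\W$, so $E_j\cap\Delta\in\W$ and every pair you could draw from it has equal coordinates. Knowing only $E_j\in\W$ therefore does not let you force $a_{2j}<a_{2j+1}$. (For what it is worth, the paper's own proof never addresses the strict inequality $a_{2j}<a_{2j+1}$ either, and the diagonal example together with $X=\{(n,m,k,h)\mid m=k\}$ shows that the strictly increasing version of the conclusion cannot hold for every admissible $\W$; so you have correctly located the only delicate spot in the argument, but the justification you sketch for it does not close it, and no argument relying solely on the stated hypotheses can.)
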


\begin{proof}
For $X\subseteq\N\times\N^2\times\N$, for $n\in\N$,
and for $(m,k)\in\N\times\N$, denote:
\begin{itemize}
\item
$X_n:=\{(m,k,h)\in\N^3\mid (n,m,k,h)\in X\}$.
\item
$X_{n,m,k}:=\{h\in\N\mid (n,m,k,h)\in X\}$.
\end{itemize}
Then by the definitions we have that:
\begin{enumerate}
\item
$X\in\U\otimes\W\otimes\U\Longleftrightarrow
X':=\{n\in\N\mid X_n\in\W\otimes\U\}\in\U$.
\item
$X_n\in \W\otimes\U \Longleftrightarrow 
(X_n)_\U:=\{(m,k)\in\N^2\mid X_{n,m,k}\in\U\}\in\W$. 
\item
$n\in X'\Longleftrightarrow (X_n)_\U\in\W$.
\end{enumerate}

Now let $X\in\U\otimes\W\otimes\U$ be given. Pick $a_1\in X'\in\U$;
then $(X_{a_1})_\U\in\W$. Pick
$(a_2,a_3)\in (X_{a_1})_\U\cap(X')^2\in\W$; then
$X_{a_1,a_2,a_3}\in\U$ and 
$(X_{a_2})_\U,(X_{a_3})_\U\in\W$. Pick 
$$(a_4,a_5)\in (X')^2\cap(X_{a_1})_\U\cap(X_{a_2})_\U\cap(X_{a_3})_\U
\cap(X_{a_1,a_2,a_3})^2\in\W;$$
then $(X_{a_4})_\U,(X_{a_5})_\U\in\W$,
and $X_{a_1,a_4,a_5},X_{a_2,a_4,a_5},X_{a_3,a_4,a_5}\in\U$, and 
$$(a_1,a_2,a_3,a_4),(a_1,a_2,a_3,a_5)\in X.$$
Inductively iterate the process. Precisely, assume that
elements $a_i$ have been defined for $i<2n$ which satisfy:
\begin{itemize}
\item
$(X_{a_i})_\U\in\W$ for every $i<2n$;
\item
$X_{a_i,a_{2j},a_{2j+1}}\in\U$ for all $i<2j<2n$;
\item
$(a_i,a_{2j},a_{2j+1},a_k)\in X$ whenever $i<2j$ and $2j+1<k<2n$.
\end{itemize}
Then pick
$$(a_{2n},a_{2n+1})\in(X')^2\cap\bigcap_{i=1}^{2n-1}(X_{a_i})_\U\,\,\cap
\bigcap_{i<2j<2n}(X_{a_i,a_{2j},a_{2j+1}})^2\in\W.$$
It is then verified in a straightforward manner that the above
properties are satisfied by replacing $n$ with $n+1$, namely:
\begin{itemize}
\item
$(X_{a_i})_\U\in\W$ for every $i<2n+2$;
\item
$X_{a_i,a_{2j},a_{2j+1}}\in\U$ for all $i<2j<2n+2$;
\item
$(a_i,a_{2j},a_{2j+1},a_k)\in X$ whenever $i<2j$ and $2j+1<k<2n+2$.
\end{itemize}
\end{proof}

\medskip
\section{Tensor products and limits of double sequences}

The following notion of limit of a sequence along an ultrafilter
was first isolated in the context of topology by H. Cartan \cite{ca1,ca2}
as a tool to study convergence in the full generality of topology.
It can be used to give simple characterisations of the topological closure of a set,
of the property of compactness, and of other topological properties.

\begin{definition}
Let $X$ be a topological space and let $\U$ be an ultrafilter on $I$. A point $x\in X$ is the
\emph{$\U$-limit} (or the \emph{limit along the ultrafilter $\U$})
of the sequence $(a_i\mid i\in I)$ in $X$
if for every neighborhood $U$ of $x$ it is $\{i\in I\mid a_i\in U\}\in\U$.
In this case, one writes $\U-\lim a_i=x$,
\end{definition}

\begin{fact}
Let $X$ be a topological space. Then:
\begin{itemize}
\item
$X$ is Hausdorff if and only if
for every ultrafilter $\U$ on a set $I$ 
every sequence $(x_i\mid i\in I)$ has at most one $\U$-limit.
\item
$X$ compact if and only if
for every ultrafilter $\U$ on a set $I$ 
all sequences $(x_i\mid i\in I)$ has at least one $\U$-limit.
\item
A point $x\in\overline{Y}$ belongs to the closure of a subspace $Y\subseteq X$
if and only if there exists an ultrafilter $\U$ on a set $I$ 
and a sequence $(y_i\mid i\in I)$ in $Y$ such that $\U-\lim_i y_i=x$.
\end{itemize}
\end{fact}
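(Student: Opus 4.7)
The plan is to handle the three equivalences separately, each by the same general strategy: the forward direction is a short argument from the definition of $\U$-limit together with the ultrafilter axioms (U1)--(U4), while the converse requires constructing an ultrafilter that extends a suitable family with the finite intersection property, provided by the neighborhood structure.

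For the Hausdorff characterisation, suppose $X$ is Hausdorff and that a sequence $(a_i)$ had two distinct $\U$-limits $x\ne y$; separate them by disjoint open neighborhoods $U\ni x$ and $V\ni y$, so $\{i:a_i\in U\}$ and $\{i:a_i\in V\}$ both lie in $\U$, hence by (U3) so does their intersection, but that intersection is empty, contradicting (U1). Conversely, if $X$ is not Hausdorff, fix $x\ne y$ whose neighborhood filters $\mathcal{N}(x)$ and $\mathcal{N}(y)$ mutually meet, index $I=\mathcal{N}(x)\times\mathcal{N}(y)$, select $a_{(U,V)}\in U\cap V$, extend the tail family $\{\{(U',V'):U'\subseteq U,\,V'\subseteq V\}:(U,V)\in I\}$ (which has the FIP) to an ultrafilter $\U$, and observe that $(a_i)$ converges to both $x$ and $y$ along $\U$.

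For compactness, suppose $X$ is compact and $(a_i)$ admits no $\U$-limit. Then for each $x\in X$ some open neighborhood $U_x$ of $x$ satisfies $\{i:a_i\in U_x\}\notin\U$, so by (U4) its complement $\{i:a_i\notin U_x\}\in\U$; extract a finite subcover $U_{x_1},\ldots,U_{x_n}$ of $X$, intersect the corresponding $\U$-sets using (U3), and note that the result must be empty, a contradiction. For the converse, if $\{V_\alpha\}_{\alpha\in A}$ is an open cover with no finite subcover, index $I=[A]^{<\omega}$, pick for each finite $F\subseteq A$ a point $a_F\notin\bigcup_{\alpha\in F}V_\alpha$, and extend the FIP family $\{\{F\in I:\alpha\in F\}:\alpha\in A\}$ to an ultrafilter $\U$; any candidate $\U$-limit $x$ lies in some $V_{\alpha_0}$, and then both $\{F:a_F\in V_{\alpha_0}\}$ and $\{F:\alpha_0\in F\}$ would belong to $\U$, yet these sets are disjoint by construction.

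For the closure statement, the reverse direction is immediate: if $\U\text{-}\lim y_i=x$ with $y_i\in Y$, then every neighborhood $U$ of $x$ yields $\{i:y_i\in U\}\in\U$, so this set is nonempty by (U1), hence $U\cap Y\ne\emptyset$, giving $x\in\overline{Y}$. For the forward direction, let $I=\mathcal{N}(x)$, choose $y_U\in U\cap Y$ for every neighborhood $U$ (possible exactly because $x\in\overline{Y}$), extend the tail family $\{\{U'\in I:U'\subseteq U\}:U\in I\}$ to an ultrafilter $\U$, and check that $\U\text{-}\lim y_U=x$.

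The main obstacle in all three cases is the same and is essentially bookkeeping: verifying that the family to be extended has the finite intersection property, and then chasing the definitions to see that the ultrafilter produced really witnesses the required convergence. Once this verification is set up carefully in one case, the other two proceed by the same template.
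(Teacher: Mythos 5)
Your proposal is correct: all six implications are the standard arguments, and in each converse the family you propose to extend (the product of neighborhood tails for Hausdorff, the sets $\{F:\alpha\in F\}$ for compactness, the neighborhood tails for closure) does have the finite intersection property and does force the required convergence. The paper states this Fact without proof, so there is no argument of its own to compare against; your write-up fills that gap along exactly the route one would expect.
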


For simplicity, in the rest of this section we will focus on sequences of real numbers.

Note that for sequences of real numbers $(a_n\mid n\in\N)$, one can extend 
in a natural way the definition of limit along an ultrafilter $\U$ so
to also include $\U-\lim_n a_n=+\infty$ and $\U-\lim_n a_n=+\infty$.\footnote
{~Precisely, we say that $\lim_n a_n=+\infty$ if for every $k\in\N$
the set $\{n\in\N\mid a_n>k\}\in\U$; and similarly for $\lim_n a_n=-\infty$.}

\begin{proposition}\label{limitpointscharacterization}
Let $(a_n)$ be a real sequence and let $\ell\in\R\cup\{\pm\infty\}$. 
The following are equivalent:
\begin{enumerate}
\item
$\ell$ is a limit point of $(a_n)$.
\item
There exists a non-principal ultrafilter $\U$ on $\N$ such that
$\U-\lim_n a_n=\ell$.
\end{enumerate}
Besides, also the following two properties are equivalent to each other:
\begin{enumerate}
\setcounter{enumi}{2}
\item
$\lim_{n\to\infty}a_n=\ell$.
\item
$\U-\lim_{n}a_n=\ell$ for every non-principal ultrafilter $\U$ on $\N$.
\end{enumerate}
\end{proposition}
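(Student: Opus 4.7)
The plan is to prove the two equivalences separately, in each case by extending an appropriate filter to a non-principal ultrafilter for the nontrivial direction and by a direct unpacking of definitions for the easy direction.

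For $(1)\Rightarrow(2)$, I would fix a countable neighborhood base of $\ell$: if $\ell\in\R$ take $U_k=(\ell-1/k,\ell+1/k)$, if $\ell=+\infty$ take $U_k=(k,+\infty)$, and symmetrically for $-\infty$. Setting $A_k:=\{n\in\N\mid a_n\in U_k\}$, the hypothesis that $\ell$ is a limit point means each $A_k$ is infinite. Since $A_1\supseteq A_2\supseteq\ldots$, the family $\{A_k\mid k\in\N\}$ together with the Fréchet filter of cofinite subsets of $\N$ has the finite intersection property (every finite intersection is an infinite set $A_{k_{\max}}$ minus finitely many points). By the usual Zorn's lemma argument, extend to an ultrafilter $\U$ on $\N$; such $\U$ is non-principal because it contains the Fréchet filter, and it satisfies $\U-\lim_n a_n=\ell$ because every neighborhood of $\ell$ contains some $U_k$ and hence its associated set contains $A_k\in\U$.

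For $(2)\Rightarrow(1)$, assume $\U-\lim_n a_n=\ell$ for some non-principal $\U$. Given any neighborhood $U$ of $\ell$, the set $\{n\mid a_n\in U\}$ lies in $\U$, which is non-principal, so it is infinite (otherwise it would be contained in the union of finitely many singletons, none of which is in $\U$, contradicting closure under finite intersections with the cofinite filter). This is precisely what it means for $\ell$ to be a limit point.

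For $(3)\Rightarrow(4)$, if $\lim_{n\to\infty}a_n=\ell$ then for every neighborhood $U$ of $\ell$ the set $\{n\mid a_n\in U\}$ is cofinite, and cofinite sets belong to every non-principal ultrafilter on $\N$ (as observed in the preliminaries). For $(4)\Rightarrow(3)$, I would argue by contrapositive: if $\lim_n a_n\ne\ell$, there is a neighborhood $U$ of $\ell$ such that $B:=\{n\mid a_n\notin U\}$ is infinite. Then $\{B\}$ together with the Fréchet filter has the finite intersection property (intersecting $B$ with a cofinite set still yields an infinite set), so it extends to a non-principal ultrafilter $\U$ with $B\in\U$. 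For this $\U$ we have $\{n\mid a_n\in U\}=\N\setminus B\notin\U$, so $\U-\lim_n a_n\ne\ell$, contradicting $(4)$.

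The only mildly delicate point is the uniform handling of $\ell=\pm\infty$ alongside $\ell\in\R$ — it forces one to use the footnote's extended definition of $\U$-limit and to choose neighborhood bases case by case — but once the neighborhood base $\{U_k\}$ is set up, all four implications reduce to the standard filter-extension trick together with the fact that every non-principal ultrafilter on $\N$ contains the Fréchet filter.
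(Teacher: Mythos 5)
Your proof is correct. For $(1)\Leftrightarrow(2)$ it is essentially the paper's argument: both directions rest on the same two facts, namely that the family of index sets determined by a countable neighborhood base of $\ell$ has the finite intersection property together with the Fr\'echet filter, and that every member of a non-principal ultrafilter on $\N$ is infinite; the only cosmetic difference is that the paper routes the forward direction through a convergent subsequence $(a_{n_k})$ while you work directly with the sets $A_k=\{n\mid a_n\in U_k\}$. For $(3)\Leftrightarrow(4)$ you genuinely diverge: the paper deduces this equivalence in one line from $(1)\Leftrightarrow(2)$ by invoking the characterisation ``$\lim_{n\to\infty}a_n=\ell$ if and only if $\ell$ is the unique limit point of $(a_n)$'', which tacitly relies on every sequence having at least one limit point in $\R\cup\{\pm\infty\}$ (compactness of the extended line), whereas you prove both implications directly --- $(3)\Rightarrow(4)$ from the fact that cofinite sets belong to every non-principal ultrafilter, and $(4)\Rightarrow(3)$ by a second filter-extension applied to the infinite set $B=\{n\mid a_n\notin U\}$. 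Your route is slightly longer but more self-contained, and it makes explicit the existence step that the paper's one-line reduction leaves implicit; the paper's route buys brevity by reusing the first equivalence.
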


\begin{proof}
$(1)\Rightarrow(2)$. By the hypothesis, there exists
a subsequence $(a_{n_k})$ such that $\lim_{k\to\infty}a_{n_k}=\ell$.
For every $N\in\N$ let $\Lambda(N):=\{k\in\N\mid |a_{n_k}-\ell|<1/N\}$
if $\ell\in\R$; let $\Lambda(N):=\{s\in\N\mid a_{n_k}>N\}$ if $\ell=+\infty$;
and let $\Lambda(N):=\{k\in\N\mid a_{n_k}<-N\}$ if $\ell=-\infty$.
Since the family $\{\Lambda(N)\mid N\in\N\}$ has the finite intersection property,
we can pick a non-principal ultrafilter $\U$ that extends it.
Then it is readily verified that $\U-\lim_n a_n=\ell$.

\smallskip
$(2)\Rightarrow(1)$. For every $k\in\N$, let $\Gamma(k):=\{n\in\N\mid |a_n-\ell|<1/k\}$
if $\ell\in\R$; let $\Gamma(k):=\{n\in\N\mid a_n>k\}$ if $\ell=+\infty$;
and let $\Gamma(k):=\{n\in\N\mid a_n<-k\}$ if $\ell=-\infty$.
By the hypothesis, $\Gamma(k)\in\U$ for every $k$,
and since $\U$ is non-principal, we can pick elements $n_k\in\Gamma(k)$
in such a way that $n_{k+1}>n_k$ for every $k$.
Then it is readily verified that $\lim_{k\to\infty}a_{n_k}=\ell$.

\smallskip
$(3)\Leftrightarrow(4)$. It directly follows from the equivalence $(1)\Leftrightarrow(2)$,
by recalling that $\lim_{n\to\infty}a_n=\ell$ if and only if $\ell$
is the unique limit point of $(a_n)$.
%
%
\end{proof}

Tensor products allow similar characterizations 
for double-index sequences.\footnote
{~Recall that, by definition, if $(a_{n,m}\mid (n,m)\in\N\times\N)$ is a double real sequence and $\ell\in\R$,
then $\lim_{(n,m)\to\infty}a_{n,m}=\ell$ if for every $\epsilon>0$ there exists $N\in\N$
such that $|a_{n,m}-\ell|<\epsilon$ for all $n,m\ge N$.}

\begin{theorem}\label{doublesequence}
Let $(a_{n,m}\mid n,m\in\N)$ be a double-index real sequence and let $\ell\in\R\cup\{\pm\infty\}$. 
If $\lim_{m\to\infty} a_{n,m}\in\R$ exists for every $n$, then
the following properties are equivalent to each other:

\begin{enumerate}
\item
$\lim_{n\to\infty}(\lim_{m\to\infty}a_{n,m})=\ell$.
\item
$(\U\otimes\V)-\lim_{(n,m)}a_{n,m}=\ell$ for all non-principal ultrafilters $\U$ and $\V$ on $\N$. 
\item
For every non-principal ultrafilter $\U$ on $\N$ there exists a non-principal ultrafilter $\V$ on $\N$
such that $(\U\otimes\V)-\lim_{(n,m)}a_{n,m}=\ell$.
\end{enumerate}
\end{theorem}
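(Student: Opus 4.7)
My plan is to set $b_n := \lim_{m\to\infty} a_{n,m} \in \R$ and route everything through the key identity
\[(\U\otimes\V)-\lim_{(n,m)} a_{n,m} \;=\; \U-\lim_n b_n,\]
valid for all non-principal ultrafilters $\U,\V$ on $\N$, where both sides are read in $\R\cup\{\pm\infty\}$. Once this is established, the three equivalences fall out immediately from Proposition \ref{limitpointscharacterization} applied to the single-index sequence $(b_n)$: condition $(1)$ says $\lim_n b_n=\ell$, which by $(3)\Leftrightarrow(4)$ of that proposition is equivalent to $\U-\lim_n b_n=\ell$ for every non-principal $\U$, and the identity translates this into $(2)$; the implication $(2)\Rightarrow(3)$ is trivial, while $(3)$ together with the identity forces $\U-\lim_n b_n=\ell$ for every non-principal $\U$ (the right-hand side does not depend on $\V$, so the existential quantifier over $\V$ is harmless), which is again $(1)$.

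The heart of the argument is therefore the identity. First, for each fixed $n$, since $a_{n,m}\to b_n\in\R$ in the ordinary sense, the implication $(3)\Rightarrow(4)$ of Proposition \ref{limitpointscharacterization} gives $\V-\lim_m a_{n,m}=b_n$ for every non-principal $\V$. Next, I would compare, for each $\epsilon>0$, the condition ``$\{m:|a_{n,m}-\ell|<\epsilon\}\in\V$'' with the plain condition ``$|b_n-\ell|<\epsilon$''. The convergence $a_{n,m}\to b_n$ yields, for every $\delta\in(0,\epsilon)$, the sandwich
\[\{n:|b_n-\ell|<\epsilon-\delta\}\;\subseteq\;\{n:\{m:|a_{n,m}-\ell|<\epsilon\}\in\V\}\;\subseteq\;\{n:|b_n-\ell|\le\epsilon\},\]
since when $|b_n-\ell|<\epsilon-\delta$ cofinitely many $m$ lie inside the $\epsilon$-window around $\ell$ (hence the middle set is in every non-principal $\V$), while if $|b_n-\ell|>\epsilon$ only finitely many do. Unfolding the definition of $\U\otimes\V$ and letting $\delta\to 0^+$ and then $\epsilon\to 0^+$ yields both directions of the identity in the finite case. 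Parallel sandwiches with $\{m:a_{n,m}>k\}$ and $\{m:a_{n,m}<-k\}$ handle $\ell=\pm\infty$.

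I do not foresee a serious obstacle: the only delicate point is the boundary situation $|b_n-\ell|=\epsilon$, which the ``$<\epsilon-\delta$ versus $\le\epsilon$'' sandwich sidesteps by allowing an arbitrarily small slack $\delta$. The hypothesis that every $b_n$ is a real number (rather than $\pm\infty$) is used in an essential way, since it is precisely what lets us identify $\V-\lim_m a_{n,m}$ with $b_n$ and thus collapse the tensor limit to an iterated limit along $\U$; without this the whole reduction to a single-index sequence would break down.
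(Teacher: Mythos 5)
Your proposal is correct. The key identity $(\U\otimes\V)-\lim_{(n,m)}a_{n,m}=\U-\lim_n b_n$ (with $b_n:=\lim_m a_{n,m}$) does hold for all non-principal $\U,\V$: for each $n$ with $|b_n-\ell'|$ small the fiber $\{m:|a_{n,m}-\ell'|<\epsilon\}$ is cofinite hence in $\V$, and for each $n$ with $|b_n-\ell'|>\epsilon$ that fiber is finite hence not in $\V$; your two-sided sandwich with the slack $\delta$ handles the boundary correctly, and the $\pm\infty$ cases go through verbatim with the one-sided sets. Your reduction of all three conditions to Proposition \ref{limitpointscharacterization} applied to $(b_n)$ is then immediate, including the observation that the existential quantifier over $\V$ in $(3)$ is vacuous because the tensor limit does not depend on $\V$.

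The paper takes a less modular route built from the same estimates: it proves $(1)\Rightarrow(2)$ directly by a triangle-inequality argument (which is exactly the ``lower'' inclusion of your sandwich combined with $\lim_n b_n=\ell$), and proves $(3)\Rightarrow(1)$ by contradiction, choosing a specific $\U$ containing an infinite set $S(k)=\{n:|b_n-\ell|\ge 1/k\}$ and showing via the reverse triangle inequality that no $\V$ can give tensor limit $\ell$ (which is your ``upper'' inclusion). What your version buys is a cleaner statement worth recording on its own: the tensor limit of a double sequence with convergent rows always equals the $\U$-limit of the row limits, whether or not the iterated limit exists, so that the theorem becomes a corollary of the single-index Proposition \ref{limitpointscharacterization}. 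What the paper's version buys is self-containment (no appeal to uniqueness of ultrafilter limits in the compact space $\R\cup\{\pm\infty\}$, which your argument implicitly uses when you assert that both sides of the identity exist and can be compared) and an explicit witness $\U$ for the failure of $(3)$ when $(1)$ fails. Either way the proof is sound.
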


\begin{proof}
Denote by $\ell_n:=\lim_{m\to\infty}a_{n,m}$, 

\smallskip
$(1)\Rightarrow (2)$. 
Fix non-principal ultrafilters $\U$ and $\V$ on $\N$. For $k,n\in\N$, let 
$$\Lambda(k,n):=\begin{cases}
\{m\in\N\mid |a_{n,m}-\ell|<1/k\}
& \text{if}\ \ell\in\R;
\\
\{m\in\N\mid a_{n,m}>k\}
& \text{if}\ \ell=+\infty;
\\
\{m\in\N\mid a_{n,m}<-k\}
& \text{if}\ \ell=-\infty.
\end{cases}$$

Our goal is to show that $\Lambda(k):=\{n\in\N\mid \Lambda(k,n)\in\V\}\in\U$ for every $k\in\N$, so that
we can conclude $(\U\otimes\V)-\lim_{n,m}a_{n,m}=\ell$.

To this end, consider the sets $\Gamma(k,n):=\{m\in\N\mid |a_{n,m}-\ell_n|<1/2k\}$, and the sets
$$\Theta(k):=\begin{cases}
\{n\in\N\mid |\ell_n-\ell|<1/2k\}
& \text{if}\ \ell\in\R;
\\
\{n\in\N\mid \ell_n>2k\}
& \text{if}\ \ell=+\infty;
\\
\{n\in\N\mid \ell_n<-2k\}
& \text{if}\ \ell=-\infty.
\end{cases}$$

Now let $k\in\N$ be fixed.
Notice that the set $\Theta(k)\in\U$ is cofinite,
since $\lim_n\ell_n=\ell$.

Assume first that $\ell\in\R$. Then for every $n\in\Theta(k)$ 
and for every $m\in\Gamma(k,n)$, 
one has that 
$$|a_{n,m}-\ell|\ \le\ |a_{n,m}-\ell_n|+|\ell_n-\ell|<\frac{1}{2k}+\frac{1}{2k}=\frac{1}{k}.$$
This shows that for every $n\in\Theta(k)$, one has the inclusion $\Gamma(k,n)\subseteq\Lambda(k,n)$.
Since $\lim_m a_{n,m}=\ell_n$, the set $\Gamma(k,n)$ is cofinite,
and hence $\Lambda(k,n)\in\V$. But then $\Theta(k)\subseteq\Lambda(k)$, 
and we can finally conclude that $\Lambda(k)\in\U$, because it is a superset of a cofinite set.

In case $\ell=+\infty$, for every $n\in\Theta(k)$ 
and for every $m\in\Gamma(k,n)$ one has that
$$a_{n,m}=(a_{n,m}-\ell_n)+\ell_n>-\frac{1}{2k}+2k>k.$$
So, also in this case for every $n\in\Theta(k)$
we have the inclusion $\Gamma(k,n)\subseteq\Lambda(k,n)$,
and similarly as above, it follows that $\Lambda(k)\in\U$, as desired.
The last case when $\ell=-\infty$ is proved in the same fashion,
starting from the following inequality that holds for every $n\in\Theta(k)$ 
and for every $m\in\Gamma(k,n)$:
$$a_{n,m}=(a_{n,m}-\ell_n)+\ell_n<\frac{1}{2k}-2k<-k.$$

\smallskip
$(2)\Rightarrow(3)$ is trivial.


\smallskip
$(3)\Rightarrow (1)$. For $k\in\N$, let
$$S(k):=\begin{cases}
\{n\in\N\mid |\ell_n-\ell|\ge \frac{1}{k}\} & \text{if}\ \ell\in\R;
\\
\{n\in\N\mid \ell_n\le k\} & \text{if}\ \ell=+\infty;
\\
\{n\in\N\mid \ell_n\ge-k\} & \text{if}\ \ell=-\infty.
\end{cases}$$
Assume for the sake of contradiction that $\lim_{n\to\infty}\ell_n\ne\ell$.
Then there exists $k\in\N$ with $S(k)$ infinite. Let such $k$ be fixed.
For $n\in\N$, consider the sets
$$T(n):=\{m\in\N\mid |a_{n,m}-\ell_n|<1/2k\}.$$
Note that every $T(n)$ is cofinite, since $\lim_m a_{n,m}=\ell_n$.
Now pick a non-principal ultrafilter $\U$ on $\N$ that contains $S(k)$.
We want to show that for every non-principal ultrafilter $\V$ on $\N$,
the ultrafilter limit $(\U\otimes\V)-\lim_{n,m}a_{n,m}\ne\ell$.

Assume first that $\ell\in\R$.
We observe that for every $n\in S(k)$ and for every $m\in T(n)$ we have that
$$|a_{n,m}-\ell|\ \ge\ |\ell_n-\ell|-|\ell_n-a_{n,m}|\ge\frac{1}{k}-\frac{1}{2k}=\frac{1}{2k}.$$
For every $n\in S(k)$, the set $T(k,n)\in\V$ because it is cofinite, and so
$$\left\{n\in\N\,\Big|\,\left\{m\in\N\,\Big|\, |a_{n,m}-\ell|\ge\frac{1}{2k}\right\}\in\V\right\}\in\U,$$
since it includes $S(k)\in\U$. This shows that the set
$\{(n,m)\in\N\times\N\mid |a_{n,m}-\ell|\ge 1/2k\}\in\U\otimes\V$, and hence
$(\U\otimes\V)-\lim_{(n,m)}a_{n,m}\ne\ell$.

If $\ell=+\infty$, for every $n\in S(k)$ and for every $m\in T(k,n)$, we have that
$$a_{n,m}=(a_{n,m}-\ell_n)+\ell_n< \frac{1}{2k}+k<2k.$$
This shows that the set
$\{(n,m)\in\N\times\N\mid a_{n,m}<2k\}\in\U\otimes\V$, and hence
$(\U\otimes\V)-\lim_{(n,m)}a_{n,m}\ne +\infty$.
The final case $\ell=-\infty$ is proved in the same fashion, starting
from the following inequality that holds for every $n\in S(k)$ and for every $m\in T(k,n)$:
$$a_{n,m}=(a_{n,m}-\ell_n)+\ell_n>-\frac{1}{k}-k>-2k.$$

\end{proof}

An interesting example of ultrafilter limit along a tensor product
can be given by considering Riemann integral.

\begin{corollary}
Assume the function $f:\R\to\R$ is Riemann integrable on every bounded interval,
and let $\ell\in\R\cup\{\pm\infty\}$. Then the following are equivalent:
\begin{enumerate}
\item
$\int_{-\infty}^{+\infty}f(x) dx=\ell$.
\item
$(\U\otimes\V)-\lim_{(n,m)}\frac{1}{m}\sum_{k=-nm}^{nm}f\left(\frac{k}{m}\right)=\ell$
for all non-principal ultrafilters $\U,\V$ on $\N$.
\item
For every non-principal ultrafilter $\U$ on $\N$ there exists a non-principal ultrafilter $\V$ on $\N$
such that $(\U\otimes\V)-\lim_{(n,m)}\frac{1}{m}\sum_{k=-nm}^{nm}f\left(\frac{k}{m}\right)=\ell$.
\end{enumerate}
\end{corollary}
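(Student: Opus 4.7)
The plan is to recognize this statement as a direct corollary of Theorem \ref{doublesequence}, applied to the double-index sequence $a_{n,m} := \frac{1}{m}\sum_{k=-nm}^{nm} f(k/m)$. To do so I need to verify the hypothesis of that theorem (the existence of the inner limit for every $n$) and then check that clause $(1)$ of Theorem \ref{doublesequence} translates into clause $(1)$ of the corollary; the clauses $(2)$ and $(3)$ match literally.

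For the inner limit, I fix $n\in\N$ and compare $a_{n,m}$ with the left-endpoint Riemann sum $R_{n,m} := \frac{1}{m}\sum_{k=-nm}^{nm-1} f(k/m)$ associated to the uniform partition of $[-n,n]$ with mesh $1/m$. Since $f$ is Riemann integrable on $[-n,n]$, it is in particular bounded on $[-n,n]$, and $R_{n,m} \to \int_{-n}^{n} f(x)\,dx$ as $m\to\infty$. The difference $a_{n,m}-R_{n,m} = f(n)/m$ is $O(1/m)$ and vanishes in the limit, so
\[
\ell_n\ :=\ \lim_{m\to\infty} a_{n,m}\ =\ \int_{-n}^{n} f(x)\,dx\ \in\ \R.
\]
This provides the hypothesis required by Theorem \ref{doublesequence}.

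For the outer limit, the statement $\int_{-\infty}^{+\infty} f(x)\,dx = \ell$ is interpreted (consistently with the symmetric shape of the sums $\sum_{k=-nm}^{nm}$) as the principal value $\lim_{n\to\infty}\int_{-n}^{n} f(x)\,dx = \ell$, which is precisely $\lim_{n\to\infty}\ell_n = \ell$. Thus clause $(1)$ of the corollary is identical to clause $(1)$ of Theorem \ref{doublesequence} applied to $(a_{n,m})$, while clauses $(2)$ and $(3)$ of the corollary and of that theorem coincide verbatim, and the three-way equivalence is transferred. The only non-trivial step is the elementary Riemann-sum computation above; I do not anticipate a genuine obstacle, since the boundedness of $f$ on $[-n,n]$ immediately absorbs the endpoint discrepancy.
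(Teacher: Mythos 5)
Your proposal is correct and follows essentially the same route as the paper, which also reduces the corollary to Theorem \ref{doublesequence} via the identity $\int_{-\infty}^{+\infty}f(x)\,dx=\lim_{n\to\infty}\bigl(\lim_{m\to\infty}a_{n,m}\bigr)$ with $\ell_n=\int_{-n}^{n}f(x)\,dx$. Your verification that the inner limit exists (absorbing the $f(n)/m$ endpoint discrepancy) is a detail the paper leaves implicit, but it is the same argument.
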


\begin{proof}
It directly follows from the previous theorem, by noticing that 
$$\int_{-\infty}^{+\infty}f(x)dx=\lim_{n\to\infty}\int_{-n}^n f(x)dx=
\lim_{n\to\infty}\left(\lim_{m\to\infty}\sum_{k=-nm}^{nm}f\left(\frac{k}{m}\right)\right).$$
\end{proof}

%


\medskip
\section{Tensor products and limsup, liminf, and asymptotic density}

Infimum and supremum of real sequences admit simple characterisations in terms
of ultrafilter limits.

\begin{proposition}\label{zero}
Let $(a_n)_n$ be a real sequence and let $\ell\in\R\cup\{\pm\infty\}$.
The following are equivalent:

\begin{enumerate}
\item
$\inf_{n\in\N}a_n=\ell$.
\item
$\U-\lim_n\left(\min_{1\le k\le n}a_k\right)=\ell$ for every non-principal ultrafilter $\U$ on $\N$.
\end{enumerate}

Similarly, also the following are equivalent to each other:
\begin{enumerate}
\setcounter{enumi}{2}
\item
$\sup_{n\in\N}a_n=\ell$.
\item
$\U-\lim_n\left(\max_{1\le k\le n}a_k\right)=\ell$ for every non-principal ultrafilter $\U$ on $\N$.
\end{enumerate}
\end{proposition}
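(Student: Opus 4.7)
The plan is to reduce both equivalences to a straightforward application of Proposition \ref{limitpointscharacterization} via the observation that the auxiliary sequences $(b_n)$ and $(c_n)$ given by the running minimum and running maximum are monotone, hence convergent in $\R\cup\{\pm\infty\}$.

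I would set $b_n:=\min_{1\le k\le n}a_k$ and observe that $(b_n)$ is non-increasing, so $\lim_{n\to\infty}b_n$ exists in $\R\cup\{-\infty\}$ and coincides with $\inf_n b_n$. Next I would note the elementary identity $\inf_n b_n=\inf_n a_n$: the inequality $\inf_n b_n\ge\inf_n a_n$ follows since each $b_n$ is a minimum of finitely many terms of $(a_n)$, while the reverse inequality holds because $b_n\le a_n$ for all $n$. Combining these, the ordinary limit $\lim_n b_n$ equals $\ell$ if and only if $\inf_n a_n=\ell$.

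Now the equivalence $(1)\Leftrightarrow(2)$ follows at once from the part $(3)\Leftrightarrow(4)$ of Proposition \ref{limitpointscharacterization} applied to the sequence $(b_n)$: the ordinary limit of $(b_n)$ equals $\ell$ precisely when $\U-\lim_n b_n=\ell$ for every non-principal ultrafilter $\U$ on $\N$. The treatment of $(3)\Leftrightarrow(4)$ of the proposition is symmetric: set $c_n:=\max_{1\le k\le n}a_k$, which is non-decreasing with $\lim_n c_n=\sup_n c_n=\sup_n a_n$, and again invoke Proposition \ref{limitpointscharacterization}.

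There is no real obstacle here; the only delicate point to keep in mind is that Proposition \ref{limitpointscharacterization} is stated for limits that may take the values $\pm\infty$, so the reduction works uniformly whether $\ell$ is finite or infinite. One should just make explicit that monotone sequences always converge in $\R\cup\{\pm\infty\}$ to their sup or inf, so that the equivalence between the ordinary limit and all ultrafilter limits applies without case distinctions.
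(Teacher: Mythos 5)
Your proposal is correct and follows essentially the same route as the paper: observe that $\inf_n a_n=\lim_{n\to\infty}\bigl(\min_{1\le k\le n}a_k\bigr)$ (and dually for the supremum) and then invoke the equivalence $(3)\Leftrightarrow(4)$ of Proposition \ref{limitpointscharacterization}. Your version merely makes explicit the monotonicity argument and the identity $\inf_n b_n=\inf_n a_n$ that the paper leaves as an observation.
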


\begin{proof}
Observe that:
\begin{itemize}
\item
$\inf_{n\in\N}a_n=\lim_{n\to\infty}(\min_{1\le k\le n}a_k)=\ell$,
\item
$\sup_{n\in\N}a_n=\lim_{n\to\infty}(\max_{1\le k\le n}a_k)=\ell$.
\end{itemize}
Then apply the characterization of limit as given by
Proposition \ref{limitpointscharacterization}.
\end{proof}


\begin{example}
Recall the \emph{Schnirelmann density} $\sigma(A)$ of a set $A\subseteq\N$:
$$\sigma(A)\ :=\ \inf_{n\in\N}\frac{|A\cap[1,n]|}{n}.$$
So, for every non-principal ultrafilter $\U$ on $\N$:
$$\sigma(A)=\U-\lim_n\left(\min_{1\le k\le n}\frac{|A\cap[1,k]|}{k}\right).$$
\end{example}

By using ultrafilter limits along tensor products, one
can also give a simple characterisation of limit inferiors and limit superiors.

\begin{proposition}\label{liminflimsup}
Let $(a_n)_n$ be a bounded real sequence and let $\ell\in\R$.
The following are equivalent:

\begin{enumerate}
\item
$\liminf_{n\to\infty}a_n=\ell$.
\item
$(\U\otimes\V)-\lim_{(n,m)}(\min_{n\le k\le m}a_k)=\ell$ for all
non-principal ultrafilters $\U$ and $\V$ on $\N$.
\item
For every non-principal ultrafilter $\U$ on $\N$ there exists a non-principal
ultrafilter $\V$ on $\N$ such that $(\U\otimes\V)-\lim_{(n,m)}(\min_{n\le k\le m}a_k)=\ell$.
\end{enumerate}

Similarly, also the following are equivalent to each other:
\begin{enumerate}
\setcounter{enumi}{3}
\item
$\limsup_{n\to\infty}a_n=\ell$.
\item
$(\U\otimes\V)-\lim_{(n,m)}(\max_{n\le k\le m}a_k)=\ell$ for all
non-principal ultrafilters $\U$ and $\V$ on $\N$.
\item
For every non-principal ultrafilter $\U$ on $\N$ there exists a non-principal
ultrafilter $\V$ on $\N$ such that $(\U\otimes\V)-\lim_{(n,m)}(\max_{n\le k\le m}a_k)=\ell$.
\end{enumerate}
\end{proposition}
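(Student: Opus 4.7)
The plan is to reduce this to the double-sequence Theorem \ref{doublesequence} via the classical description of $\liminf$ (resp.\ $\limsup$) as an iterated limit. Concretely, set $b_{n,m}:=\min_{n\le k\le m}a_k$ for $m\ge n$, extending the definition arbitrarily for $m<n$ (say $b_{n,m}:=a_n$); any such choice is harmless, because whenever $\U$ and $\V$ are non-principal the set $\{(n,m)\mid m\ge n\}$ belongs to $\U\otimes\V$ (for each $n$ the inner fiber $\{m\mid m\ge n\}$ is cofinite, hence in $\V$), so the ultrafilter limit only sees the ``good'' region.

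Next I would verify the hypotheses of Theorem \ref{doublesequence}. For each fixed $n$, the sequence $(b_{n,m})_{m\ge n}$ is non-increasing in $m$ and bounded below, so it converges in $\R$ to $\ell_n:=\inf_{k\ge n}a_k$. Moreover $(\ell_n)$ is non-decreasing and bounded, and by the very definition of $\liminf$,
\[
\lim_{n\to\infty}\ell_n\;=\;\lim_{n\to\infty}\inf_{k\ge n}a_k\;=\;\liminf_{n\to\infty}a_n.
\]
Thus the hypothesis of Theorem \ref{doublesequence} that $\lim_m b_{n,m}\in\R$ exists for every $n$ is met, and the iterated limit appearing in condition (1) of that theorem corresponds exactly to condition (1) here. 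The equivalences $(1)\Leftrightarrow(2)\Leftrightarrow(3)$ then follow immediately from the analogous equivalences in Theorem \ref{doublesequence}.

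The $\limsup$ half $(4)\Leftrightarrow(5)\Leftrightarrow(6)$ is entirely symmetric: replace $\min$ with $\max$ and $\inf$ with $\sup$, and reverse the monotonicity. Boundedness of $(a_n)$ again guarantees that $\lim_m(\max_{n\le k\le m}a_k)=\sup_{k\ge n}a_k\in\R$, and $\lim_n\sup_{k\ge n}a_k=\limsup_n a_n$ by definition, so Theorem \ref{doublesequence} applies verbatim. I do not foresee any real obstacle; the whole argument is a packaged application of the double-sequence theorem. The only point requiring genuine care is confirming that the inner and outer limits reassemble to $\liminf$ (resp.\ $\limsup$), which is essentially their definition once boundedness is used to keep every quantity finite.
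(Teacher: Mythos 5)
Your proof is correct and follows essentially the same route as the paper: both reduce the statement to Theorem \ref{doublesequence} via the identity $\liminf_n a_n=\lim_n\inf_{k\ge n}a_k=\lim_n(\lim_m\min_{n\le k\le m}a_k)$ (and dually for $\limsup$), after extending the double sequence arbitrarily below the diagonal and observing that the region $\{(n,m)\mid m\ge n\}$ belongs to $\U\otimes\V$, so the extension is irrelevant to the ultrafilter limit. The only cosmetic difference is that the paper works with $b_{n,m}=\min_{n\le k\le n+m}a_k$ to keep the inner index unrestricted, while you keep $\min_{n\le k\le m}a_k$ and handle $m<n$ by an arbitrary assignment; both are equally valid.
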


\begin{proof}
Observe that
$$\liminf_{n\to\infty}a_n\ =\ \lim_{n\to\infty}\inf_{k\ge n}a_k\ =\ 
\lim_{n\to\infty}\left(\lim_{m\to\infty}\min_{n\le k\le n+m}a_k\right).$$
$$\limsup_{n\to\infty}a_n\ =\ \lim_{n\to\infty}\sup_{k\ge n}a_k\ =\ 
\lim_{n\to\infty}\left(\lim_{m\to\infty}\max_{n\le k\le n+m}a_k\right).$$
Then apply Theorem \ref{doublesequence} to the double-indexed sequences
$b_{n,m}=\min_{n\le k\le n+m}a_k$ and $b'_{n,m}=\max_{n\le k\le n+m}a_k$ respectively,
where we agree that $b_{n,m}=b'_{n,m}=0$ for $n>m$.\footnote
{~Note that for all non-principal ultrafilters $\U, \V$ on $\N$,
the ``under-diagonal" 
$$\{(n,m)\in\N\times\N\mid n<m\}\notin\U\otimes\V,$$
and so the values of the sequences $b_{n,m}, b'_{n,m}$ for those indexes 
do no effect the ultrafilter limit.}
\end{proof}

%

\begin{example}\label{ex-lowerupperdensity}
Recall the \emph{lower density} $\underline{d}(A)$
and the \emph{upper density} $\overline{d}(A)$
of a set $A\subseteq\N$:
$$\underline{d}(A)\ =\ \liminf_{n\to\infty}\frac{|A\cap[1,n]|}{n}\quad\text{and}\quad
\overline{d}(A)\ =\ \limsup_{n\to\infty}\frac{|A\cap[1,n]|}{n}.$$
So, for all non-principal ultrafilters $\U,\V$ on $\N$ one has:
$$\underline{d}(A)\ =\ (\U\otimes\V)-\lim_{(n,m)}\left(\min_{n\le k\le m}\frac{|A\cap[1,k]|}{k}\right);$$
$$\overline{d}(A)\ =\ (\U\otimes\V)-\lim_{(n,m)}\left(\max_{n\le k\le m}\frac{|A\cap[1,k]|}{k}\right).$$
\end{example}


%

\begin{example}\label{ex-Banachdensity}
Recall the notions of \emph{lower} and \emph{upper Banach density} of a set $A\subseteq\N$:
$$\underline{\text{BD}}(A)\ =\ \lim_{n\to\infty}\left(\min_{x\in\N}\frac{|A\cap[x+1,x+n]|}{n}\right);$$
$$\overline{\text{BD}}(A)\ =\ \lim_{n\to\infty}\left(\max_{x\in\N}\frac{|A\cap[x+1,x+n]|}{n}\right).$$
We remark that the above limits actually exist. Indeed, if we let
\begin{itemize}
\item
$\underline{a}_n=\min_{x\in\N}|A\cap[x+1,x+n]|$, and
\item
$\overline{a}_n=\max_{x\in\N}|A\cap[x+1,x+n]|$,
\end{itemize}
then it is shown that
\begin{itemize}
\item
$\underline{\text{BD}}(A)=\lim_{n\to\infty}\frac{\underline{a}_n}{n}=\sup_{n\in\N}\frac{\underline{a}_n}{n}$, and
\item
$\overline{\text{BD}}(A)=\lim_{n\to\infty}\frac{\overline{a}_n}{n}=\inf_{n\in\N}\frac{\overline{a}_n}{n}$.
\end{itemize}
\end{example}

In consequence, by using the previous results,
one directly obtains the following characterizations.

\begin{proposition}
Let $A\subseteq\N$. 

\begin{enumerate}
\item
For all nonprincipal ultrafilters $\U$ and $\V$ on $\N$, one has
$$(\U\otimes\V)-\lim_{(n,m)}
\left[\max_{1\le k\le n}\left(\min_{1\le x\le m}\frac{|A\cap[x+1,x+k]|}{k}\right)\right] =\ 
\underline{\text{BD}}(A).$$
\item
For all nonprincipal ultrafilters $\U$ and $\V$ on $\N$, on has
$$(\U\otimes\V)-\lim_{(n,m)}
\left[\min_{1\le k\le n}\left(\max_{1\le x\le m}\frac{|A\cap[x+1,x+k]|}{k}\right)\right] =\ 
\overline{\text{BD}}(A).$$
\end{enumerate}
\end{proposition}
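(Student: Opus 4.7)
The plan is to reduce both statements to the implication $(1)\Rightarrow(2)$ of Theorem~\ref{doublesequence}, exploiting the two equivalent expressions for the Banach densities recorded in Example~\ref{ex-Banachdensity}. For part $(1)$, set
$$c_{k,m}\;:=\;\min_{1\le x\le m}\frac{|A\cap[x+1,x+k]|}{k},\qquad d_{n,m}\;:=\;\max_{1\le k\le n}c_{k,m},$$
so that $d_{n,m}$ is exactly the bracketed quantity whose $(\U\otimes\V)$-limit we want to identify. I first check that for every fixed $n$ the inner limit $\lim_{m\to\infty}d_{n,m}$ exists and is a real number: for each fixed $k$, the sequence $(c_{k,m})_m$ is non-increasing in $m$ and bounded below by $0$, hence convergent, with
$$\lim_{m\to\infty}c_{k,m}\;=\;\inf_{x\in\N}\frac{|A\cap[x+1,x+k]|}{k}\;=\;\frac{\underline{a}_k}{k},$$
in the notation of Example~\ref{ex-Banachdensity}. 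Since the outer maximum is taken over the finite set $\{1,\dots,n\}$, the limit in $m$ commutes with it, giving $\lim_{m\to\infty}d_{n,m}=\max_{1\le k\le n}\frac{\underline{a}_k}{k}$.

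The next step is to compute the iterated limit. The sequence $n\mapsto\max_{1\le k\le n}\frac{\underline{a}_k}{k}$ is non-decreasing and bounded, so
$$\lim_{n\to\infty}\Bigl(\lim_{m\to\infty}d_{n,m}\Bigr)\;=\;\sup_{k\in\N}\frac{\underline{a}_k}{k}\;=\;\underline{\text{BD}}(A),$$
where the last equality is the second formula for $\underline{\text{BD}}(A)$ stated in Example~\ref{ex-Banachdensity}. Since the hypothesis of Theorem~\ref{doublesequence} is met (the inner $m$-limit is finite for every $n$) and the iterated limit equals $\underline{\text{BD}}(A)$, the implication $(1)\Rightarrow(2)$ of that theorem yields $(\U\otimes\V)-\lim_{(n,m)}d_{n,m}=\underline{\text{BD}}(A)$ for all non-principal $\U,\V$ on $\N$, which is precisely item $(1)$ of the proposition.

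Part $(2)$ is dually symmetric. Setting $e_{k,m}:=\max_{1\le x\le m}\frac{|A\cap[x+1,x+k]|}{k}$ and $f_{n,m}:=\min_{1\le k\le n}e_{k,m}$, monotonicity now gives $\lim_{m\to\infty}e_{k,m}=\sup_m e_{k,m}=\frac{\overline{a}_k}{k}$; commuting with the finite outer minimum yields $\lim_{m\to\infty}f_{n,m}=\min_{1\le k\le n}\frac{\overline{a}_k}{k}$, and the non-increasing $n$-sequence of these values converges to $\inf_{k\in\N}\frac{\overline{a}_k}{k}=\overline{\text{BD}}(A)$, again by Example~\ref{ex-Banachdensity}. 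A second application of the implication $(1)\Rightarrow(2)$ of Theorem~\ref{doublesequence} finishes the proof.

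There is no genuine obstacle here: the whole argument is a bookkeeping exercise that swaps the order of a $\lim$, a finite $\max$ (or $\min$), and a monotone $\inf$ (or $\sup$). The only point requiring a bit of attention is verifying the hypothesis of Theorem~\ref{doublesequence} that the inner $m$-limit exists in $\R$ for every fixed $n$; this is automatic from the monotonicity of $c_{k,m}$ (resp.\ $e_{k,m}$) in $m$ combined with the finiteness of the outer index range $\{1,\dots,n\}$.
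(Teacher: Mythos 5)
Your proof is correct and follows essentially the same route as the paper: express the Banach density as an iterated limit $\lim_n\lim_m$ of the displayed double sequence (using monotonicity in $m$ and the interchange of a finite $\max$/$\min$ with the inner limit), then invoke the implication $(1)\Rightarrow(2)$ of Theorem~\ref{doublesequence} together with the formulas $\underline{\text{BD}}(A)=\sup_n\frac{\underline{a}_n}{n}$ and $\overline{\text{BD}}(A)=\inf_n\frac{\overline{a}_n}{n}$ from Example~\ref{ex-Banachdensity}. The only cosmetic difference is that the paper writes out the upper-density case and leaves the lower one as ``entirely similar,'' whereas you detail the lower-density case first.
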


\begin{proof}
$(1)$.
Recall that $\overline{\text{BD}}(A)=\inf_{n\in\N}\frac{\underline{a}_n}{n}=
\lim_{n\to\infty}\min_{s\le n}\frac{\underline{a}_s}{s}$
where $\overline{a}_n:=\max_{x\in\N}|A\cap[x+1,x+n]|$.
Observe that for every 
$n\in\N$, one has 
$\frac{\overline{a}_n}{n}=\sup_{m\in\N}a_{n,m}=\lim_{m\to\infty}\max_{t\le m}a_{s,t}$
where $a_{n,m}:=\frac{|A\cap[m+1,m+n]|}{n}$, and so
$$\min_{s\le n}\frac{\overline{a}_s}{s}\ =\ \min_{s\le n}\left(\lim_{m\to\infty}\max_{t\le m}a_{s,t}\right)\ =\ 
\lim_{m\to\infty}\min_{s\le n}\left(\max_{t\le m}a_{s,t}\right).$$ 
Then $\overline{\text{BD}}(A)=\lim_{n\to\infty}\lim_{m\to\infty} b_{n,m}$ 
where $b_{n,m}:=\min_{s\le n}(\max_{t\le m}a_{s,t})$. 
Note that the double real sequence $(b_{n,m}\mid (n,m)\in\N\times\N)$
is bounded and such that $\lim_{m\to\infty}b_{n,m}=\min_{s\le n}\frac{\overline{a}_s}{s}$
exists for every $n$. So, we can apply Proposition \ref{doublesequence} and obtain
the desired result.

The property for lower Banach density is proved in an entirely similar manner.
\end{proof}

\medskip
\section{Tensor products as idempotents}

Given sets $I$ and $J$, consider the following operation $\star$ on $I\times J$:
$$(a,b)\star(c,d)=(a,d).$$

It is readily verified that $\star$ is associative, and so
$(I\times J,\star)$ is a semigroup. As already mentioned,
any semigroup has a canonical extension
to a semigroup on the corresponding space of ultrafilters (see \cite[Ch.4]{hs}).
In our case, one obtains a semigroup $(\beta(I\times J),{\footnotesize{\textcircled{$\star$}}})$ where
the operation ${\footnotesize{\textcircled{$\star$}}}$ between ultrafilters on $I\times J$ is defined by setting
for every $X\subseteq I\times J$:
$$X\in\V{\footnotesize{\textcircled{$\star$}}}\W\Leftrightarrow
\{(a,b)\in I\times J\mid \{(c,d)\in I\times J\mid (a,b)\star(c,d)\in X\}\in\W\}\in\V.$$

This operation ${\footnotesize{\textcircled{$\star$}}}$ is
closely related to tensor products.

\begin{proposition}
Let $\V,\W$ be ultrafilters on $I\times J$. Then 
$\V{\footnotesize{\textcircled{$\star$}}}\W=\pi_1(\V)\otimes\pi_2(\W)$,
where $\pi_1:I\times J\to I$ and $\pi_2:I\times J\to J$
are the canonical projections.
\end{proposition}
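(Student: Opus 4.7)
The proof is essentially an unfolding of the definitions, and I would present it as a short chain of equivalences. The key observation is that the operation $\star$ projects away the second coordinate of the left factor and the first coordinate of the right factor, so when we evaluate $(a,b)\star(c,d)\in X$ the resulting condition depends only on $a$ and $d$. This should decouple the $\W$-quantification from the $b$-dependence and the $\V$-quantification from the $d$-dependence, producing exactly the tensor product of the projected ultrafilters.

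Concretely, I would fix $X\subseteq I\times J$ and compute the inner set appearing in the definition of $\V{\footnotesize{\textcircled{$\star$}}}\W$:
\[
\{(c,d)\in I\times J\mid (a,b)\star(c,d)\in X\}=\{(c,d)\in I\times J\mid (a,d)\in X\}=I\times X_a,
\]
where $X_a=\{d\in J\mid (a,d)\in X\}$ is the vertical $a$-fiber. The first crucial remark is that this set depends only on $a$, not on $b$. The second is that $I\times X_a=\pi_2^{-1}(X_a)$, hence $I\times X_a\in\W$ if and only if $X_a\in\pi_2(\W)$.

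Substituting back, $X\in\V{\footnotesize{\textcircled{$\star$}}}\W$ is equivalent to
\[
\{(a,b)\in I\times J\mid X_a\in\pi_2(\W)\}\in\V.
\]
Now the set on the left is of the form $A\times J$ with $A=\{a\in I\mid X_a\in\pi_2(\W)\}$, since the membership condition is independent of $b$. Using $A\times J=\pi_1^{-1}(A)$, this belongs to $\V$ iff $A\in\pi_1(\V)$. That last condition, unpacked, is precisely the definition of $X\in\pi_1(\V)\otimes\pi_2(\W)$, so the two ultrafilters agree on every $X$ and are therefore equal.

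There is no real obstacle here; the only thing to be careful about is the bookkeeping of which variable the $\V$ and $\W$ quantifiers bind, and the two elementary identities $I\times Y=\pi_2^{-1}(Y)$ and $A\times J=\pi_1^{-1}(A)$ that convert membership of rectangular sets in $\W$ and $\V$ into membership in the image ultrafilters $\pi_2(\W)$ and $\pi_1(\V)$.
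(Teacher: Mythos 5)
Your proof is correct and follows essentially the same route as the paper's: both unfold the definition of ${\footnotesize{\textcircled{$\star$}}}$, observe that the inner set equals $I\times X_a=\pi_2^{-1}(X_a)$ and the outer set equals $A\times J=\pi_1^{-1}(A)$, and then translate membership in $\W$ and $\V$ into membership in $\pi_2(\W)$ and $\pi_1(\V)$ to land on the definition of the tensor product. No issues.
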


\begin{proof}
For every $X\subseteq I\times J$ the following are equivalent conditions:
\begin{itemize}
\item
$X\in \V{\footnotesize{\textcircled{$\star$}}}\W$
\item
$\{(a,b)\in I\times J\mid \{(c,d)\in I\times J\mid (a,b)\star(c,d)\in X\}\in\W\}\in\V$.
\item
$\{(a,b)\in I\times J\mid \{(c,d)\in I\times J\mid (a,d)\in X\}\in\W\}\in\V$.
\item
$\{(a,b)\in I\times J\mid I\times\{d\in J\mid (a,d)\in X\}\in\W\}\in\V$.
\item
$\{(a,b)\in I\times J\mid \pi_2^{-1}(\{d\in J\mid (a,d)\in X\})\in\W\}\in\V$.
\item
$\{(a,b)\in I\times J\mid \{d\in J\mid (a,d)\in X\}\in\pi_2(\W)\}\in\V$.
\item
$\{a\in I\mid\{d\in J\mid (a,d)\in X\}\in\pi_2(\W)\}\times J\in\V$.
\item
$\pi_1^{-1}(\{a\in I\mid\{d\in J\mid (a,d)\in X\}\in\pi_2(\W)\})\in\V$.
\item
$\{a\in I\mid\{d\in J\mid (a,d)\in X\}\in\pi_2(\W)\}\in\pi_1(\V)$.
\item
$X\in\pi_1(\V)\otimes\pi_2(\W)$.
\end{itemize}
\end{proof}

\begin{corollary}
An ultrafilter $\V$ on $I\times J$ is a tensor product if and only if 
$\V{\footnotesize{\textcircled{$\star$}}}\V=\V$ is idempotent with respect
to the operation ${\footnotesize{\textcircled{$\star$}}}$.
\end{corollary}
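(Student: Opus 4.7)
The plan is to apply the preceding proposition in the special case $\W=\V$, which yields the identity $\V\,{\footnotesize{\textcircled{$\star$}}}\,\V=\pi_1(\V)\otimes\pi_2(\V)$. Once this identity is in hand, the corollary is essentially a restatement of the equivalence $(i)\Leftrightarrow(ii)$ in Theorem \ref{tensorproductsequivalences}, which characterises tensor products on a Cartesian product as exactly those ultrafilters that coincide with the tensor product of their two canonical projections.

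For the forward direction, assume $\V$ is a tensor product on $I\times J$. Then by $(i)\Rightarrow(ii)$ of Theorem \ref{tensorproductsequivalences} we have $\V=\pi_1(\V)\otimes\pi_2(\V)$, and combining this with the proposition gives $\V=\pi_1(\V)\otimes\pi_2(\V)=\V\,{\footnotesize{\textcircled{$\star$}}}\,\V$, so $\V$ is idempotent with respect to ${\footnotesize{\textcircled{$\star$}}}$. For the reverse direction, assume $\V\,{\footnotesize{\textcircled{$\star$}}}\,\V=\V$. By the proposition the left-hand side equals $\pi_1(\V)\otimes\pi_2(\V)$, so $\V=\pi_1(\V)\otimes\pi_2(\V)$ is exhibited as a tensor product of two ultrafilters on $I$ and $J$, as required by Definition (the definition of tensor product given at the start).

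Since both directions are immediate consequences of the proposition and of Theorem \ref{tensorproductsequivalences}, there is no genuine obstacle to overcome; the proof is a two-line chain of equalities. The only thing worth flagging to the reader is that the passage from idempotency of ${\footnotesize{\textcircled{$\star$}}}$ to being a tensor product uses only the definition of tensor product, whereas the converse passage uses the nontrivial implication $(i)\Rightarrow(ii)$ of Theorem \ref{tensorproductsequivalences} (namely, that the two projections of a tensor product $\U\otimes\W$ are exactly $\U$ and $\W$).
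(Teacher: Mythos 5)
Your proof is correct and follows essentially the same route as the paper: apply the preceding proposition with $\W=\V$ to get $\V\,{\footnotesize{\textcircled{$\star$}}}\,\V=\pi_1(\V)\otimes\pi_2(\V)$, and combine with the characterisation of tensor products as those ultrafilters equal to the tensor product of their own projections. Your added remark that the two directions rest on slightly different facts (the definition versus $(i)\Rightarrow(ii)$ of Theorem \ref{tensorproductsequivalences}) is accurate but does not change the argument.
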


\begin{proof}
Recall that an ultrafilter $\V$ on $I\times J$
is a tensor product if and only if $\V=\pi_1(\V)\otimes\pi_2(\V)$,
and then apply the previous proposition.
\end{proof}

\medskip
\section{Combinatorial properties of the sets of tensor products}\label{tensor}

This section is dedicated to the following general combinatorial property of the sets 
that belong to tensor products. (It is an
extension of Theorems \ref{UotimesU} and \ref{UotimesV}.)

\begin{theorem}\label{main}
Let $\Phi$ be a nonempty set of functions
from $\{1,\ldots,k\}$ onto $\{1,\ldots,m\}$,
let $I_1,\ldots,I_m$ be infinite sets,
and for every $\varphi\in\Phi$ let $X^\varphi\subseteq I_{\varphi(1)}\times\ldots\times I_{\varphi(k)}$. 
Then the following properties are equivalent:
\begin{enumerate}
\item
There exist (distinct) non-principal ultrafilters $\U_\ell$ on $I_\ell$ for $\ell=1,\ldots,m$
such that for every $\varphi\in\Phi$ one has
$X^\varphi\in\U_{\varphi(1)}\otimes\cdots\otimes\U_{\varphi(k)}$.
\item
There exist sequences $(a_{\ell,n})_{n\in\N}$ in $I_\ell$ for $\ell=1,\ldots,m$ such that:
\begin{itemize}
\item[$(a)$]
$a_{\ell,n}\ne a_{\ell',n'}$ for $(\ell,n)\ne(\ell',n')$; \emph{i.e.}, every $(a_{\ell,n})_{n\in\N}$ is 1-1
and the ranges $\{a_{\ell,n}\mid n\in\N\}$ are pairwise disjoint sets.
\item[$(b)$]
If $I_\ell=\N$ we can take the sequence $(a_{\ell,n})_n$ to be increasing.
\item[$(c)$]
For every $\varphi\in\Phi$, and for 
every $(j_1,\ldots,j_m)\in\N^m$ where $j_1\le\ldots\le j_m$ 
and where $j_s<j_{s+1}$ whenever $\varphi(s)\ge\varphi(s+1)$, it is
$$(a_{\varphi(1),j_1},\ldots,a_{\varphi(k),j_k})\in X^\varphi.$$
\end{itemize}
\end{enumerate}
\end{theorem}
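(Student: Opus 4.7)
The plan is to treat the two implications separately. A useful preliminary observation is that the constraint in $(c)$---the inequality $j_1\le\cdots\le j_k$ with strictness whenever $\varphi(s)\ge\varphi(s+1)$---is precisely the requirement that the pairs $((\varphi(1),j_1),\ldots,(\varphi(k),j_k))$ form a strictly increasing chain in the lexicographic order $\prec$ on $\{1,\ldots,m\}\times\N$ that compares second coordinates first and breaks ties by the first coordinate. This suggests picking the elements $a_{\ell,n}$ one at a time in $\prec$-order on the pairs $(\ell,n)$, namely $(1,1),(2,1),\ldots,(m,1),(1,2),(2,2),\ldots$; the whole argument is a direct generalisation of that given for Theorem \ref{UotimesU}.

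For $(1)\Rightarrow(2)$ I would maintain throughout the construction the invariant that, after the elements indexed by all pairs $\preceq(\ell^*,n^*)$ have been chosen, for every $\varphi\in\Phi$, every $s\le k$, and every $\prec$-increasing chain $(\varphi(1),j_1)\prec\cdots\prec(\varphi(s-1),j_{s-1})\preceq(\ell^*,n^*)$, the iterated fiber
$$F^\varphi_{j_1,\ldots,j_{s-1}}:=\{(b_s,\ldots,b_k)\mid (a_{\varphi(1),j_1},\ldots,a_{\varphi(s-1),j_{s-1}},b_s,\ldots,b_k)\in X^\varphi\}$$
belongs to $\U_{\varphi(s)}\otimes\cdots\otimes\U_{\varphi(k)}$. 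The invariant holds vacuously at the start by the hypothesis on $X^\varphi$. When it is the turn of $a_{\ell_0,n_0}$, each $\varphi$ together with each permissible prefix of length $s-1$ satisfying $\varphi(s)=\ell_0$ contributes, by one unrolling of the tensor-product definition applied to the invariant, a set in $\U_{\ell_0}$ from which $a_{\ell_0,n_0}$ must be selected; since $\Phi$ is finite (being a set of functions between finite sets) and only finitely many prefixes exist, the intersection of these constraints lies in $\U_{\ell_0}$ and is infinite by non-principality. This allows a fresh choice, ensuring $(a)$, that may additionally be required to exceed all previously picked elements when $I_{\ell_0}=\N$, ensuring $(b)$. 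At the final layer $s=k$ the invariant degenerates to the membership $(a_{\varphi(1),j_1},\ldots,a_{\varphi(k),j_k})\in X^\varphi$, which is exactly $(c)$.

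For $(2)\Rightarrow(1)$, let $\U_\ell$ be any non-principal ultrafilter on $I_\ell$ extending the filter generated by the tails $\{a_{\ell,n}\mid n\ge N\}$ for $N\in\N$. Such ultrafilters exist by Zorn, are non-principal because every tail is infinite, and are pairwise distinct because the ranges $\{a_{\ell,n}\mid n\in\N\}$ belong to $\U_\ell$ and are disjoint by $(a)$. To verify $X^\varphi\in\U_{\varphi(1)}\otimes\cdots\otimes\U_{\varphi(k)}$ I would prove by downward induction on $s$ from $k$ to $1$ that for all $j_1<\cdots<j_{s-1}$ the fiber of $X^\varphi$ over $(a_{\varphi(1),j_1},\ldots,a_{\varphi(s-1),j_{s-1}})$ lies in $\U_{\varphi(s)}\otimes\cdots\otimes\U_{\varphi(k)}$. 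The base case $s=k$ holds because $(c)$ applied to the strictly increasing sequence $j_1<\cdots<j_k$ forces the fiber to contain the tail $\{a_{\varphi(k),j_k}\mid j_k>j_{k-1}\}\in\U_{\varphi(k)}$; the inductive step repeats the argument one layer up by unrolling the definition of tensor product and using the same tail trick for $\U_{\varphi(s)}$. The main obstacle is the bookkeeping in the forward direction: one must identify exactly the right invariant so that it propagates under a single unpacking of the tensor-product definition and so that the $\prec$-increasing chains exhaust precisely the combinatorial patterns appearing in $(c)$. Once this dictionary between lex-chains of pairs and the condition on $(j_1,\ldots,j_k)$ is in place, each individual step is a routine application of non-principality.
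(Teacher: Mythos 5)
Your proposal is correct and follows essentially the same route as the paper: the hard direction $(1)\Rightarrow(2)$ is the same recursive construction, choosing the $a_{\ell,n}$ in the order $(1,1),(2,1),\ldots,(m,1),(1,2),\ldots$ while maintaining the invariant that every iterated fiber along an admissible prefix lies in the corresponding tail tensor product $\U_{\varphi(s)}\otimes\cdots\otimes\U_{\varphi(k)}$, each step being a finite intersection of sets in the non-principal ultrafilter $\U_{\ell_0}$ (and your observation that condition $(c)$ describes exactly the strictly increasing chains for the lexicographic order comparing second coordinates first is precisely the bookkeeping the paper carries out). The only, inessential, divergence is in $(2)\Rightarrow(1)$: the paper takes all $\U_\ell$ to be images $\theta_\ell(\V)$ of a single non-principal ultrafilter $\V$ on $\N$ and invokes Proposition \ref{tensorproperties}(2) together with Proposition \ref{superdiagonal}, whereas you extend the tail filters and verify membership by downward induction on $s$; both verifications use only the strictly increasing instances of $(c)$ and are equally valid.
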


\begin{remark}
In property (2) it is taken
into account the possibility that the index sets are not disjoint;
for example, when $I_1=\ldots=I_k$ one has that (the ranges of the) sequences
$(a_{\ell,n})_{n\in\N}$ are pairwise disjoint.
\end{remark}

\begin{remark}
Without loss of generality, one can always assume that the considered 
non-principal ultrafilters $\U_1,\ldots,\U_m$ are distinct.
Indeed, otherwise one can consider $\{\V_1,\ldots,\V_{m'}\}=\{\U_1,\ldots,\U_m\}$
where the ultrafilters $\V_1,\ldots,\V_{m'}$ are distinct, 
and then replace every surjective function $\varphi:\{1,\ldots,k\}\to\{1,\ldots,m\}$
in $\Phi$ with the surjective function $\varphi':\{1,\ldots,k\}\to\{1,\ldots,m'\}$ where
$\varphi'(i)=\ell\Leftrightarrow \U_{\varphi(i)}=\V_\ell$.
\end{remark}

\begin{remark}\label{remark-varphi(1)}
By reordering the sets $I_1,\ldots,I_k$ if necessary, 
one can always assume that there exists $\varphi\in\Phi$ such that $\varphi(1)=1$.
Besides, when $k=m$, by reordering the sets $I_1,\ldots,I_k$ if necessary, 
one can even assume that the identity function $id\in\Phi$.
\end{remark}

The proof of Theorem \ref{main} is postponed to the last Section \ref{sec-proofs}.
We see below a few relevant particular cases, and some consequences about sum-sets.

\begin{example}\label{example-U^k}
Let $\Phi=\{c\}$ where $c:\{1,\ldots,k\}\to\{1\}$ is the constant function.
In this case the previous theorem reduces to Theorem \ref{UotimesU},
\emph{i.e.} for every $X\subseteq I^k$, there exist
a non-principal ultrafilter $\U$ on $I$ with $X\in\U^{k\otimes}$ 
if and only if there exists a 1-1 sequence 
$(b_n)$ in $I$ such that $(b_{j_1},\ldots,b_{j_k})\in X$ for all $j_1<\ldots<j_k$.
Moreover, if $I=\N$ we can take the sequence $(b_n)$ to be increasing.
\end{example}

As a straight corollary one obtains an ultrafilter characterization
of a sumsets property. Let us first introduce notation.

\smallskip
Recall that the \emph{pseudo-sum} $\U\oplus\V$ of ultrafilters on $\N$ is defined by setting 
$$A\in\U\oplus\V\Leftrightarrow\{n\in\N\mid A-n\in\V\}\in\U$$
where $A-n:=\{m\in\N\mid m+n\in A\}$ is the leftward shift of $A$ by $n$.

It is readily verified from the definitions that $\U\oplus\V=\text{Sum}(\U\otimes\V)$
is the image ultrafilter of the tensor product under the sum function
$\text{Sum}:(n,m)\mapsto n+m$, \emph{i.e.}
$A\in\U\oplus\V$ if and only if
the preimage 
$\text{Sum}^{-1}(A)=\{(m,n)\mid n+m\in A\}\in\U\otimes\V$.
The operation $\oplus$ is associative but not commutative
(verification is straightforward but not entirely trivial).
Accordingly, for every $k$ one has that
$A\in\U_1\oplus\cdots\oplus\U_k$ if and only if
$\text{Sum}_k^{-1}(A):=\{(n_1,\ldots,n_k)\in\N^k\mid n_1+\ldots+n_k\in A\}\in\U_1\otimes\cdots\otimes\U_k$.

\begin{notation}
Let $B_s=\{b_{s,1}<b_{s,2}<\ldots<b_{s,n}<b_{s,n+1}<\ldots\}\subseteq\N$ be infinite sets
for $s=1,\ldots,k$. We denote by
$$B_1\oplus\cdots\oplus B_k:=\{b_{1,j_1}+\ldots+b_{k,j_k}\mid j_1<\ldots<j_k\}.$$
When $B_1=\ldots=B_k$ we write $B^{k\oplus}$ to denote $B\oplus\ldots\oplus B$ ($k$ times).
\end{notation}

Note that:
$$B^{k\oplus}:=\{b_1+\ldots+b_k\mid b_1,\ldots,b_k\in B\ \text{distinct}\}=
\left\{\sum_{b\in F}b\ \Big|\ F\subset B,\ |F|=k\right\}.$$

\begin{proposition}\label{B^k}
Let $A\subseteq\N$ and let $k\in\N$. The following are equivalent:
\begin{enumerate}
\item
There exists an infinite set $B$ such that $B^{k\oplus}\subseteq A$.
\item
There exists a non-principal ultrafilter $\U$ on $\N$ with
$A\in\U^{k\oplus}$.
\end{enumerate}
\end{proposition}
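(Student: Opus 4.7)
The plan is to reduce everything to Theorem \ref{UotimesU} (equivalently, Example \ref{example-U^k}) by translating the sumset condition on $B$ and $A$ into the Ramsey-type condition on $[B]^k$ and an appropriate subset of $\N^k$.

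First I would set $X := \text{Sum}_k^{-1}(A) = \{(n_1,\ldots,n_k)\in\N^k \mid n_1+\ldots+n_k \in A\}$. The key observation, already recorded in the excerpt, is that for any non-principal ultrafilter $\U$ on $\N$ we have the equivalence
$$A\in\U^{k\oplus} \ \Longleftrightarrow\ X\in\U^{k\otimes},$$
so condition $(2)$ is precisely the statement that there exists a non-principal $\U$ on $\N$ with $X\in\U^{k\otimes}$.

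Next I would translate condition $(1)$. Given an infinite $B=\{b_1<b_2<\cdots\}\subseteq\N$, commutativity of addition gives
$$B^{k\oplus}=\{b_{j_1}+\cdots+b_{j_k}\mid j_1<\cdots<j_k\},$$
so the inclusion $B^{k\oplus}\subseteq A$ is equivalent to $(b_{j_1},\ldots,b_{j_k})\in X$ for every $j_1<\cdots<j_k$, that is, to the inclusion $[B]^k\subseteq X$ (under the identification of $[B]^k$ with the increasing $k$-tuples from $B$, as in the paragraph preceding Theorem \ref{UotimesU}).

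With both conditions reframed, the proposition becomes
$$\exists\, B\text{ infinite with } [B]^k\subseteq X \ \Longleftrightarrow\ \exists\, \U\text{ non-principal on }\N\text{ with }X\in\U^{k\otimes},$$
which is exactly the content of Theorem \ref{UotimesU} (the version for $X\subseteq\N^k$ stated in Example \ref{example-U^k}). So both implications follow immediately: for $(2)\Rightarrow(1)$, apply the ``forward'' direction of Theorem \ref{UotimesU} to $X$ to extract an infinite increasing $B$ with $[B]^k\subseteq X$; for $(1)\Rightarrow(2)$, apply the ``converse'' direction. There is no real obstacle here, since the only substantive step---producing the ultrafilter from the set or the set from the ultrafilter---has already been carried out in Theorem \ref{UotimesU}; the work in this proposition is entirely bookkeeping, namely recognising that the sumset condition becomes a Ramsey-type condition after pulling back along $\text{Sum}_k$.
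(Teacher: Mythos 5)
Your proposal is correct and follows essentially the same route as the paper: both pull $A$ back to $X=\text{Sum}_k^{-1}(A)$, use the equivalence $A\in\U^{k\oplus}\Leftrightarrow X\in\U^{k\otimes}$, and translate $B^{k\oplus}\subseteq A$ into the Ramsey-type inclusion $[B]^k\subseteq X$ before invoking Theorem \ref{UotimesU} (in the form of Example \ref{example-U^k}). No gaps.
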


\begin{proof}
Let $X:=\text{Sum}_k^{-1}(A)\subseteq\N^k$. 
According to Example \ref{example-U^k}, 
there exist a non-principal ultrafilter $\U$ on $\N$
with $X\in\U^{k\otimes}$ if and only if there exists an increasing sequence
$(b_n)$ such that $\{(b_{j_1},\ldots,b_{j_k})\mid j_1<\ldots<j_k\}\subseteq X$.
Set $B:=\{b_n\mid n\in\N\}$, and observe that $X\in\U^{k\otimes}\Leftrightarrow A\in\U^{k\oplus}$,
and that $\{(b_{j_1},\ldots,b_{j_k})\mid j_1<\ldots<j_k\}\subseteq X\Leftrightarrow
\{b_{j_1}+\ldots+b_{j_k}\mid j_1<\ldots<j_k\}=B^{k\oplus}\subseteq A$.
\end{proof}

\smallskip
The previous example can be extended to tensor products
of $k$ distinct ultrafilters.

\begin{example}\label{example-kmany}
Let $\Phi=\{id\}$ where $id:\{1,\ldots,k\}\to\{1,\ldots,k\}$ is
the identity function.
In this case Theorem \ref{main} reduces to Theorem \ref{UotimesV}, \emph{i.e.}
for every $X\subseteq I_1\times\ldots\times I_k$,
there exist distinct non-principal ultrafilters $\U_1,\ldots,\U_k$ 
on $I_1,\ldots,I_k$ respectively such that $X\in\U_1\otimes\cdots\otimes\U_k$
if and only if there exist disjoint 1-1 sequences 
$(a_{1,n})_n,\ldots,(a_{k,n})_n$ such that $(a_{1,j_1},\ldots,a_{k,j_k})\in X$ for all $j_1\le \ldots\le j_k$.
Moreover, if $I_1=\ldots=I_k=\N$ then we can take the sequences $(a_{s,n})_n$ to be increasing.
\end{example}

In the same way as Proposition \ref{B^k} follows from Example \ref{example-U^k},
the above example implies the following.

\begin{proposition}\label{kmany}
Let $A\subseteq\N$ and $k\in\N$. Then the following are equivalent:
\begin{enumerate}
\item
There exist disjoint infinite sets $B_s$ for $s=1,\ldots,k$
with $B_1\oplus\cdots\oplus B_k\subseteq A$.
\item
There exist distinct non-principal ultrafilters $\U_1,\ldots,\U_k$ on $\N$ 
such that $A\in\U_1\oplus\cdots\oplus\U_k$.
\end{enumerate}
\end{proposition}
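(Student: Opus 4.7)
The plan is to mirror the proof of Proposition \ref{B^k}, substituting Example \ref{example-kmany} for Example \ref{example-U^k}. Set $X:=\text{Sum}_k^{-1}(A)=\{(n_1,\ldots,n_k)\in\N^k\mid n_1+\ldots+n_k\in A\}$; by the definition of the pseudo-sum $\oplus$ recorded above, $A\in\U_1\oplus\cdots\oplus\U_k$ is equivalent to $X\in\U_1\otimes\cdots\otimes\U_k$. Hence the proposition reduces to the corresponding statement about $X$ and tensor products.

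For direction $(2)\Rightarrow(1)$, I would apply Example \ref{example-kmany} directly to $X$ to obtain pairwise disjoint strictly increasing sequences $(b_{s,n})_n$ such that $(b_{1,j_1},\ldots,b_{k,j_k})\in X$ whenever $j_1\le\ldots\le j_k$. Setting $B_s:=\{b_{s,n}\mid n\in\N\}$, the sets $B_s$ are pairwise disjoint infinite subsets of $\N$, and specialising to strict indices $j_1<\ldots<j_k$ gives $\{b_{1,j_1}+\ldots+b_{k,j_k}\mid j_1<\ldots<j_k\}=B_1\oplus\cdots\oplus B_k\subseteq A$. For $(1)\Rightarrow(2)$, I would enumerate each disjoint $B_s$ in increasing order; the hypothesis $B_1\oplus\cdots\oplus B_k\subseteq A$ translates to $[B_1,\ldots,B_k]^k\subseteq X$ in the sense of Theorem \ref{UotimesV}, which supplies non-principal ultrafilters $\U_s$ with $X\in\U_1\otimes\cdots\otimes\U_k$. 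To force distinctness, I would arrange each $\U_s$ to extend the tail filter $\{\{b_{s,n}\mid n\ge m\}\mid m\in\N\}$ of $B_s$ (possible since $B_s$ is infinite); then $B_s\in\U_s$, and pairwise disjointness of the $B_s$ yields $\U_s\ne\U_{s'}$, because $B_s\in\U_s$ while $\N\setminus B_s\supseteq B_{s'}\in\U_{s'}$.

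The main subtlety is the asymmetry between the strict index condition $j_1<\ldots<j_k$ in the definition of $B_1\oplus\cdots\oplus B_k$ and the non-strict one $j_1\le\ldots\le j_k$ appearing in Example \ref{example-kmany}. Fortunately the asymmetry works in our favour: Example \ref{example-kmany} delivers more than is needed for $(2)\Rightarrow(1)$, and in the reverse direction the weaker strict condition coming from $B_1\oplus\cdots\oplus B_k\subseteq A$ is already enough to invoke Theorem \ref{UotimesV}.
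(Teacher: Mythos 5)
Your proposal is correct and follows essentially the same route the paper intends: the paper gives no explicit proof, saying only that Proposition \ref{kmany} follows from Example \ref{example-kmany} just as Proposition \ref{B^k} follows from Example \ref{example-U^k}, and your reduction via $X=\text{Sum}_k^{-1}(A)$ together with the strict-versus-non-strict index observation is exactly that argument made explicit. The only point to tighten is the phrase ``arrange each $\U_s$ to extend the tail filter'': one should note that the standard witness for the converse of Theorem \ref{UotimesV} is the pushforward $\U_s=\theta_s(\V)$ of a fixed non-principal $\V$ under $\theta_s:n\mapsto b_{s,n}$, which automatically contains every tail of $B_s$ while preserving $X\in\U_1\otimes\cdots\otimes\U_k$, so distinctness indeed comes for free from the disjointness of the $B_s$.
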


Variants of the previous property can also be demonstrated in the case where
some of the ultrafilters $\U_s$ repeat (\emph{i.e.} $\U_s=\U_t$ for some $s<t$),
and some of the sets $B_s$ repeat accordingly
(\emph{i.e.} $B_s=B_t$ for some $s<t$).
(All such variants follows from Theorem \ref{main}.)

\smallskip
It is worth stressing that Theorem \ref{main} also allows us to characterize those sets $A$ that includes 
\emph{whole} $k$-sumsets 
$$B_1+\ldots+B_k:=\{b_1+\ldots+b_k\mid b_s\in B_s\ \text{for}\ s=1,\ldots,k\}.$$

Let us start with the basic case $k=2$.

\begin{example}\label{example-B+C}
Let $\Phi=\{\varphi,\psi\}$ where $\varphi,\psi:\{1,2\}\to\{1,2\}$
are the functions such that $\varphi(1)=\psi(2)=1$ and $\varphi(2)=\psi(1)=2$.
For every $X\subseteq I\times I$, if we let $X^\varphi=X^\psi=X$,
then Theorem \ref{main} states that there exist 
distinct non-principal ultrafilters $\U$ and $\V$ on $I$  
such that $X\in(\U\otimes\V)\cap(\V\otimes\U)$ if and only if there exist
disjoint 1-1 sequences $(b_n)$ and $(c_n)$ in $I$ such that
$$\{(b_i,c_j)\mid i\le j\}\cup\{(c_j,b_i)\mid j<i\}\subseteq X.$$
Moreover, if $I=\N$ then we can take the sequences
$(b_n)$ and $(c_n)$ to be increasing.
\end{example}

As a straight consequence we obtain the following characterization.

\begin{proposition}\label{B+C}
Let $A\subseteq\N$. The following properties are equivalent:
\begin{enumerate}
\item
There exist disjoint infinite sets $B,C$ with
$B+C\subseteq A$.
\item
There exist distinct non-principal ultrafilters $\U, \V$ on $\N$ with 
\\
$A\in(\U\oplus\V)\cap(\V\oplus\U)$.\footnote
{~This ultrafilter characterization has a small history.
It was first isolated by this author in April 2014 during 
a research program at the American Institute of Mathematics ``SQuaRE program" on 
\emph{Nonstandard Methods in Number Theory}, held in 2013, 2014 and 2015 in Palo Alto and San Jos\`e.
(The ``SQuaRE" research team consisted of 
Isaac Goldbring, Renling Jin, Steven Leth, Martino Lupini, Karl Mahlburg, and this author.)
It was then extended to sumsets of $k$-many sets by M. Lupini (see Proposition \ref{Bk}).
The idea was to use that ultrafilter characterization as a tool towards the following old conjecture of Erd\"os':
\begin{itemize}
\item
``$B+C$ conjecture":\ 
\emph{For every set $A\subseteq\N$ of positive asymptotic density
there exist infinite $B,C$ such that the sumset $B+C\subseteq A$".}
\end{itemize}
The ``SQuaRE" team found a partial solution to the conjecture \cite{dgjllm},
but never found a way to use Proposition \ref{B+C}, that remained unpublished.
In 2018, that property was first used in a model-theoretic context by
U. Andrews, G. Conan and I. Goldbring in \cite{acg},
and a proof was included in that paper as Proposition 3.1.
The same property was also recalled as Proposition 10 in the survey \cite{gb}.
Finally, in 2019, J. Moreira, F.K. Richter, and D. Robertson
eventually succeeded in proving the full Erd\"os' $B+C$ conjecture,
and they found it convenient to use the characterization of Proposition \ref{B+C}
as a preliminary step for their ergodic proof (see \cite[Lemma 2.1]{mrr}).}
\end{enumerate}
\end{proposition}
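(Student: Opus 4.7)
The plan is to reduce the statement to Example \ref{example-B+C} via the standard translation from sumsets in $\N$ to Cartesian products in $\N\times\N$. Specifically, I would work with $X:=\text{Sum}^{-1}(A)=\{(n,m)\in\N\times\N\mid n+m\in A\}$ and use the fact, recorded earlier in the paper, that $A\in\U\oplus\V\Leftrightarrow X\in\U\otimes\V$, and likewise $A\in\V\oplus\U\Leftrightarrow X\in\V\otimes\U$. Hence condition (2) is equivalent to the statement that $X\in(\U\otimes\V)\cap(\V\otimes\U)$ for suitable distinct non-principal ultrafilters $\U,\V$ on $\N$, which is precisely the situation handled by Example \ref{example-B+C}.

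For the implication $(2)\Rightarrow(1)$, I would apply Example \ref{example-B+C} to $X$ to obtain disjoint \emph{increasing} sequences $(b_n)$ and $(c_n)$ in $\N$ such that
$$\{(b_i,c_j)\mid i\le j\}\,\cup\,\{(c_j,b_i)\mid j<i\}\;\subseteq\;X.$$
Unwinding the definition of $X$, this yields $b_i+c_j\in A$ for all $i\le j$ and $c_j+b_i\in A$ for all $j<i$. Here the key observation---and the only point where some care is required---is that addition on $\N$ is commutative, so the second family of sums coincides with $b_i+c_j$ for $i>j$. The two conditions together therefore cover every pair $(i,j)\in\N\times\N$, and setting $B:=\{b_n\mid n\in\N\}$ and $C:=\{c_n\mid n\in\N\}$ we obtain two disjoint infinite subsets of $\N$ with $B+C\subseteq A$.

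For the converse $(1)\Rightarrow(2)$, I would enumerate the disjoint infinite sets as $B=\{b_1<b_2<\cdots\}$ and $C=\{c_1<c_2<\cdots\}$. From $B+C\subseteq A$ we have $b_i+c_j\in A$ and $c_j+b_i\in A$ for \emph{all} $i,j\in\N$, hence both $(b_i,c_j)\in X$ and $(c_j,b_i)\in X$ unconditionally. In particular, the (weaker) triangular inclusion demanded in Example \ref{example-B+C} is satisfied by the two disjoint increasing sequences $(b_n),(c_n)$, and the converse direction of that example produces distinct non-principal ultrafilters $\U,\V$ on $\N$ with $X\in(\U\otimes\V)\cap(\V\otimes\U)$, which is exactly $A\in(\U\oplus\V)\cap(\V\oplus\U)$.

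There is no real obstacle here: the content of the proposition is entirely packaged in Example \ref{example-B+C}, and the only step worth highlighting is the appeal to commutativity of addition, which converts the asymmetric triangular condition $\{(b_i,c_j)\mid i\le j\}\cup\{(c_j,b_i)\mid j<i\}\subseteq X$ into the symmetric statement $B+C\subseteq A$. Thus the proof should consist of essentially two short paragraphs, one per direction.
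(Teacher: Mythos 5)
Your proposal is correct and follows essentially the same route as the paper: both reduce to Example \ref{example-B+C} via $X=\text{Sum}^{-1}(A)$ and the equivalence $A\in\U\oplus\V\Leftrightarrow X\in\U\otimes\V$, and both use commutativity of addition to identify the triangular condition $\{(b_i,c_j)\mid i\le j\}\cup\{(c_j,b_i)\mid j<i\}\subseteq X$ with $B+C\subseteq A$.
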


\begin{proof}
Let $X=\text{Sum}^{-1}(A)\subseteq\N\times\N$, let $(b_n)$ and $(c_n)$ be disjoint
increasing sequences in $\N$ as in Example \ref{example-B+C}, and let
$B:=\{b_n\mid n\in\N\}$ and $C:=\{c _n\mid n\in\N\}$. Then observe that
$A\in(\U\oplus\V)\cap(\V\oplus\U)$ if and only if 
$X\in(\U\otimes\V)\cap(\V\otimes\U)$ if and only if
$\{(b_i,c_j)\mid i\le j\}\cup\{(c_j,b_i)\mid j<i\}\subseteq X$ if and only if
$$B+C=\{b_i+c_j\mid i\le j\}\cup\{c_j+b_i\mid j<i\}\subseteq A.$$
\end{proof}

\begin{remark}
The result above also holds when $\U=\V$. Indeed,
in this case one can use Proposition \ref{kmany} to
obtain an increasing sequence $(b_n)$ such that $b_i+b_j\in A$ for all $i<j$,
and then let $B:=\{b_{2n}\mid n\in\N\}$ and $C:=\{b_{2n-1}\mid n\in\N\}$.
Clearly $B$ and $C$ are disjoint, and $B+C\subseteq A$, as desired.
\end{remark}

\smallskip
Also the generalisation to sums of $k$-many sets holds.
To this end we need first to isolate a consequence of Theorem \ref{main}
where one considers as functions the set of all permutations.

\smallskip
Let $\mathfrak{S}_k$ denote the set of all
permutations $\sigma:\{1,\ldots,k\}\to\{1,\ldots,k\}$.

\begin{definition}
We say that $\sigma\in\mathfrak{S}_k$ is a \emph{good ordering}
for $(j_1,\ldots,j_k)\in\N^k$ if
$j_{\sigma(1)}\le\ldots\le j_{\sigma(k)}$ and
 $j_{\sigma(s)}<j_{\sigma(s+1)}$ whenever $\sigma(s)>\sigma(s+1)$.
\end{definition}

\begin{lemma}
Every $(j_1,\ldots,j_k)\in\N^k$ admits a good ordering.
\end{lemma}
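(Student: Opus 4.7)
The plan is to recognise the notion of good ordering as exactly a stable sort of the tuple $(j_1,\ldots,j_k)$: we want the values $j_{\sigma(s)}$ to be nondecreasing in $s$, and in any block of equal values the corresponding indices $\sigma(s)$ must appear in strictly increasing order (that is what the condition ``$\sigma(s)>\sigma(s+1)\Rightarrow j_{\sigma(s)}<j_{\sigma(s+1)}$'' really says, when read contrapositively on adjacent pairs of equal values).

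The cleanest way to produce $\sigma$ is to order the auxiliary pairs $(j_i,i)\in\N\times\N$ for $i=1,\ldots,k$ by the strict lexicographic order. Since the second coordinates are pairwise distinct, all $k$ pairs are distinct and there is a unique permutation $\sigma\in\mathfrak{S}_k$ with
$$(j_{\sigma(1)},\sigma(1))\,<_{\text{lex}}\,(j_{\sigma(2)},\sigma(2))\,<_{\text{lex}}\,\cdots\,<_{\text{lex}}\,(j_{\sigma(k)},\sigma(k)).$$
I would then check the two required properties in turn. First, comparing the $s$-th and $(s+1)$-st pair, the lex order forces $j_{\sigma(s)}\le j_{\sigma(s+1)}$, giving the nondecreasing condition. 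Second, suppose $\sigma(s)>\sigma(s+1)$; then the second coordinate of the $s$-th pair exceeds that of the $(s+1)$-st, so for the pair at position $s$ to precede the pair at position $s+1$ in lex order it is necessary that the first coordinates compare strictly, i.e. $j_{\sigma(s)}<j_{\sigma(s+1)}$. This is exactly the second requirement of a good ordering.

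There is no real obstacle here beyond spotting the right construction; the argument is just the standard stable-sort trick encoded via lexicographic order on value-index pairs, and both verifications are immediate from the definition of $<_{\text{lex}}$.
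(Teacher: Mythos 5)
Your proof is correct and is essentially the same construction as the paper's: the paper builds $\sigma$ greedily by repeatedly selecting the smallest remaining value and breaking ties by smallest index, which is exactly the stable sort you obtain by ordering the pairs $(j_i,i)$ lexicographically, and the verification of the two defining conditions is the same in both cases. Your lexicographic packaging is arguably a little cleaner, but it is not a different argument.
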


\begin{proof}
Inductively define $\sigma\in\mathfrak{S}_k$ by letting:
\begin{itemize}
\item
$\sigma(1)=\min\{t\mid j_t=\min\{j_1,\ldots,j_k\}\}$.
\item
$\sigma(s+1)=\min\{t\mid j_t=\min(\{j_1,\ldots,j_k\}\setminus\{j_{\sigma(1)},\ldots,j_{\sigma(s)}\})\}$ for $s<k$.
\end{itemize}

Clearly, $j_{\sigma(1)}\le\ldots\le j_{\sigma(k)}$. 
Now observe that if there are elements in
$\{j_1,\ldots,j_k\}\setminus\{j_{\sigma(1)}\le\ldots\le j_{\sigma(s)}\}$ that are equal to $j_{\sigma(s)}$
then, by the definition of $\tau$, we have $\sigma(s+1)>\sigma(s)$.
Otherwise, all elements in
$\{j_1,\ldots,j_k\}\setminus\{j_{\sigma(1)}\le\ldots\le j_{\sigma(s)}\}$ are greater
than $j_{\sigma(s)}$, and hence $j_{\sigma(s)}<j_{\sigma(s+1)}$.
Finally, notice that $\sigma(s)>\sigma(s+1)$ can occur only in the latter case,
and so $\sigma$ is a good ordering of $(j_1,\ldots,j_k)$. 
\end{proof}

A useful consequence of the previous theorem is the following:

\begin{proposition}\label{permutations}
For every $X\subseteq\N^k$, the following properties are equivalent:
\begin{enumerate}
\item
There exist (distinct) non-principal ultrafilters $\U_1,\ldots,\U_k$ on $\N$
such that $X\in\U_{\sigma(1)}\otimes\cdots\otimes\U_{\sigma(k)}$
for every $\sigma\in\mathfrak{S}_k$.
\item
There exist disjoint increasing sequences $(a_{\ell,n})_{n\in\N}$ for $\ell=1,\ldots,k$ such that
for every $\sigma\in\mathfrak{S}_k$ and for 
every $(j_1,\ldots,j_k)\in\N^k$ where $j_1\le\ldots\le j_k$ 
and where $j_s<j_{s+1}$ whenever $\sigma(s)>\sigma(s+1)$, it is 
$(a_{\sigma(1),j_1},\ldots,a_{\sigma(k),j_k})\in X$.
\item
There exist disjoint increasing sequences $(a_{\ell,n})_{n\in\N}$
for $\ell=1,\ldots,k$ such that for every $(j_1,\ldots,j_k)\in\N^k$ 
and for every good ordering $\tau$ of $(j_1,\ldots,j_k)$, it is
$(a_{\tau(1),j_{\tau(1)}},\ldots,a_{\tau(k),j_{\tau(k)}})\in X$.
\end{enumerate}
\end{proposition}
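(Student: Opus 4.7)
The plan is to recognize that this proposition is essentially a direct specialization of Theorem \ref{main}, followed by a pure bookkeeping reformulation for the third clause. So the work splits into two parts: $(1)\Leftrightarrow(2)$, obtained by instantiating Theorem \ref{main}, and $(2)\Leftrightarrow(3)$, obtained by reindexing using the definition of good ordering.

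For $(1)\Leftrightarrow(2)$, I would apply Theorem \ref{main} with the choices $m=k$, $I_1=\ldots=I_k=\N$, $\Phi=\mathfrak{S}_k$ (the set of bijections of $\{1,\ldots,k\}$, which are trivially surjections onto $\{1,\ldots,k\}$), and $X^\sigma := X$ for every $\sigma\in\mathfrak{S}_k$. Then condition (1) of the proposition matches Theorem \ref{main}(1) verbatim. For condition (2), note that since every $\sigma\in\mathfrak{S}_k$ is injective, the condition ``$\sigma(s)\ge\sigma(s+1)$'' appearing in Theorem \ref{main}(c) is equivalent to ``$\sigma(s)>\sigma(s+1)$'', which is what we want. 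Clauses (a) and (b) of Theorem \ref{main}(2) yield precisely the disjoint increasing sequences called for in the proposition. There is no further content here — it is a direct read-off.

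For $(2)\Rightarrow(3)$, given an arbitrary $(j_1,\ldots,j_k)\in\N^k$ and a good ordering $\tau$ of it, I would set $j'_s:=j_{\tau(s)}$. By the very definition of good ordering, the tuple $(j'_1,\ldots,j'_k)$ is weakly increasing and satisfies $j'_s<j'_{s+1}$ whenever $\tau(s)>\tau(s+1)$, which is exactly the hypothesis of (2) for the permutation $\sigma=\tau$. Applying (2) then yields $(a_{\tau(1),j'_1},\ldots,a_{\tau(k),j'_k})=(a_{\tau(1),j_{\tau(1)}},\ldots,a_{\tau(k),j_{\tau(k)}})\in X$, as required by (3). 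Conversely, for $(3)\Rightarrow(2)$, given $\sigma\in\mathfrak{S}_k$ and $(j_1,\ldots,j_k)$ satisfying the hypotheses of (2), I would define a new tuple $(j'_1,\ldots,j'_k)$ by the rule $j'_{\sigma(s)}:=j_s$ for $s=1,\ldots,k$, and then check that $\sigma$ itself is a good ordering for $(j'_1,\ldots,j'_k)$: the inequalities $j'_{\sigma(1)}\le\ldots\le j'_{\sigma(k)}$ follow since $j_1\le\ldots\le j_k$, and $j'_{\sigma(s)}<j'_{\sigma(s+1)}$ whenever $\sigma(s)>\sigma(s+1)$ follows from the strictness assumption in (2). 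Applying (3) to $(j'_1,\ldots,j'_k)$ with the good ordering $\sigma$ gives exactly the desired membership $(a_{\sigma(1),j_1},\ldots,a_{\sigma(k),j_k})\in X$.

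There is no real obstacle in this proof: all the difficulty is absorbed into Theorem \ref{main}. The only point that requires a moment of care is verifying that the ``$\ge$'' in Theorem \ref{main}(c) becomes ``$>$'' under the injectivity of permutations, and tracking the index gymnastics in $(2)\Leftrightarrow(3)$ so that the roles of $\sigma$ and $\tau$, and of the original tuple and its reindexed version, line up correctly.
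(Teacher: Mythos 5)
Your proposal is correct and follows essentially the same route as the paper: $(1)\Leftrightarrow(2)$ by specializing Theorem \ref{main} to $\Phi=\mathfrak{S}_k$ with $X^\sigma=X$, and $(2)\Leftrightarrow(3)$ by the same reindexing arguments (the paper's $(3)\Rightarrow(2)$ uses $j'_s:=j_{\sigma^{-1}(s)}$, which is exactly your rule $j'_{\sigma(s)}:=j_s$). Your explicit remark that the ``$\ge$'' in Theorem \ref{main}(c) becomes ``$>$'' by injectivity of permutations is a small point the paper leaves implicit, but otherwise the two proofs coincide.
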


\begin{proof}
$(1)\Leftrightarrow(2)$ is the particular case
of Theorem \ref{main} where $\Phi=\mathfrak{S}_k$ and 
where $X^\sigma=X$ for all $\sigma\in\mathfrak{S}_k$. 

\smallskip
$(2)\Rightarrow(3)$. By the definition of good ordering,
the tuple $(j_{\tau(1)},\ldots,j_{\tau(k)})$ satisfies the hypotheses of condition $(2)$
for the permutation $\tau$, and so we can conclude that
$(a_{\tau(1),j_{\tau(1)}},\ldots,a_{\tau(k),j_{\tau(k)}})\in X$.

\smallskip
$(3)\Rightarrow(2)$.
Suppose that $\sigma\in\mathfrak{S}_k$ and $(j_1,\ldots,j_k)\in\N^k$ 
are such that $j_1\le\ldots\le j_k$ and $j_s<j_{s+1}$ whenever $\sigma(s)>\sigma(s+1)$.
Take the inverse permutation $\theta=\sigma^{-1}$, and 
let $j'_s:=j_{\theta(s)}$ for $s=1,\ldots,k$. 
We observe that $j'_{\sigma(s)}=j_{\sigma(\theta(s))}=j_s$
for every $s=1,\ldots,k$, and hence $\sigma$ is a good
ordering of $(j'_1,\ldots,j'_k)$. Then we can conclude that
$(a_{\sigma(1),j_1},\ldots,a_{\sigma(k),j_k})=
(a_{\sigma(1),j'_{\sigma(1)}},\ldots,a_{\sigma(k),j'_{\sigma(k)}})\in X$, as desired.
\end{proof}

We are now ready to prove the following general property about sum-sets.

\begin{proposition}\label{Bk}
Let $A\subseteq\N$ and let $k\in\N$. The following properties are equivalent:
\begin{enumerate}
\item
There exist distinct non-principal ultrafilters $\U_1,\ldots,\U_k$ on $\N$ such that 
\\
$A\in\U_{\sigma(1)}\oplus\cdots\oplus\U_{\sigma(k)}$ for every permutation $\sigma$ of $\{1,\ldots,k\}$.
\item
There exist disjoint infinite sets $B_1,\ldots,B_k$ with
$B_1+\ldots+B_k\subseteq A$.
\end{enumerate}
\end{proposition}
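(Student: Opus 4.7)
The plan is to reduce the statement to Proposition \ref{permutations} by pulling everything back along the sum function, exactly as Proposition \ref{B+C} was deduced from Example \ref{example-B+C}. Set $X:=\text{Sum}_k^{-1}(A)=\{(n_1,\ldots,n_k)\in\N^k\mid n_1+\ldots+n_k\in A\}$. By the defining property of $\oplus$ recalled right before Proposition \ref{B^k}, for any non-principal ultrafilters $\U_1,\ldots,\U_k$ and any permutation $\sigma\in\mathfrak{S}_k$ one has
$$A\in\U_{\sigma(1)}\oplus\cdots\oplus\U_{\sigma(k)}\ \Longleftrightarrow\ X\in\U_{\sigma(1)}\otimes\cdots\otimes\U_{\sigma(k)}.$$
So condition $(1)$ of Proposition \ref{Bk} is exactly condition $(1)$ of Proposition \ref{permutations} applied to this $X$.

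For $(1)\Rightarrow(2)$, I would invoke Proposition \ref{permutations} in the direction $(1)\Rightarrow(3)$ to produce disjoint increasing sequences $(a_{\ell,n})_n$, $\ell=1,\ldots,k$, such that for every tuple $(j_1,\ldots,j_k)\in\N^k$ and every good ordering $\tau$ of it,
$(a_{\tau(1),j_{\tau(1)}},\ldots,a_{\tau(k),j_{\tau(k)}})\in X$. Define $B_\ell:=\{a_{\ell,n}\mid n\in\N\}$. The $B_\ell$ are infinite and pairwise disjoint. Given arbitrary $b_\ell\in B_\ell$, write $b_\ell=a_{\ell,j_\ell}$; by the preceding lemma in the excerpt, the tuple $(j_1,\ldots,j_k)$ admits at least one good ordering $\tau$, so $a_{\tau(1),j_{\tau(1)}}+\ldots+a_{\tau(k),j_{\tau(k)}}\in A$. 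Since addition is commutative, this sum equals $a_{1,j_1}+\ldots+a_{k,j_k}=b_1+\ldots+b_k$, and hence $B_1+\ldots+B_k\subseteq A$.

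For $(2)\Rightarrow(1)$, I would start from disjoint infinite sets $B_\ell\subseteq\N$ with $B_1+\ldots+B_k\subseteq A$, enumerate each increasingly as $B_\ell=\{a_{\ell,1}<a_{\ell,2}<\ldots\}$, and verify condition $(3)$ of Proposition \ref{permutations} for $X=\text{Sum}_k^{-1}(A)$. For any tuple $(j_1,\ldots,j_k)$ and any good ordering $\tau$, one has
$$a_{\tau(1),j_{\tau(1)}}+\ldots+a_{\tau(k),j_{\tau(k)}}\in B_1+\ldots+B_k\subseteq A,$$
so $(a_{\tau(1),j_{\tau(1)}},\ldots,a_{\tau(k),j_{\tau(k)}})\in X$. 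Proposition \ref{permutations} then supplies distinct non-principal ultrafilters $\U_1,\ldots,\U_k$ with $X\in\U_{\sigma(1)}\otimes\cdots\otimes\U_{\sigma(k)}$ for every $\sigma\in\mathfrak{S}_k$, which by the equivalence displayed above is $(1)$.

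There is essentially no hard step: the content is entirely in Proposition \ref{permutations}, and the extra ingredient is just the commutativity of $+$, which lets us ignore the particular good ordering of the indices. The only point worth double-checking is that the disjointness of the ranges of the sequences $(a_{\ell,n})_n$ matches the disjointness of the $B_\ell$ in both directions, which is immediate from the definitions.
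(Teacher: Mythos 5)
Your proposal is correct and follows essentially the same route as the paper: pull $A$ back to $X=\text{Sum}_k^{-1}(A)$, apply Proposition \ref{permutations} (in its good-ordering formulation), and translate between the two sides using the existence of a good ordering for every tuple together with the commutativity of addition. The only cosmetic difference is that you spell out the commutativity step and the disjointness bookkeeping slightly more explicitly than the paper does.
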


\begin{proof}
Consider the preimage $\text{Sum}_k^{-1}(A)=\{(n_1,\ldots,n_k)\in\N^k\mid n_1+\ldots+n_k\in A\}$.
By the previous Proposition \ref{permutations}, the following conditions are equivalent:
\begin{itemize}
\item[$(i)$]
There exists (distinct) non-principal ultrafilters $\U_1,\ldots,\U_k$ on $\N$
such that $\text{Sum}_k^{-1}(A)\in\U_{\sigma(1)}\otimes\cdots\otimes\U_{\sigma(k)}$
for every $\sigma\in\mathfrak{S}_k$.
\item[$(ii)$]
There exist disjoint increasing sequences $(b_{\ell,n})_{n\in\N}$
for $\ell=1,\ldots,k$ such that for every $(j_1,\ldots,j_k)\in\N^k$ 
and for every good ordering $\sigma$ of $(j_1,\ldots,j_k)$, it is
$(a_{\sigma(1),j_{\sigma(1)}},\ldots,a_{\sigma(k),j_{\sigma(k)}})\in\text{Sum}_k^{-1}(A)$.
\end{itemize}
Now observe that, by the definitions, 
$$\text{Sum}^{-1}_k(A)\in\U_{\sigma(1)}\otimes\cdots\otimes\U_{\sigma(k)}\Longleftrightarrow 
A\in\U_{\sigma(1)}\oplus\cdots\oplus\U_{\sigma(k)},$$
and so condition $(i)$ is equivalent to condition $(1)$.

We also observe that $(b_{\sigma(1),j_{\sigma(1)}},\ldots,b_{\sigma(k),j_{\sigma(k)}})\in\text{Sum}_k^{-1}(A)$
for every good ordering $\sigma$ of $(j_1,\ldots,j_k)$, if and only if
$b_{1,j_1}+\ldots+b_{k,j_k}\in A$. 
So, if we define the pairwise disjoint sets $B_\ell:=\{a_{\ell,n}\mid n\in\N\}$ for $\ell=1,\ldots,k$,
then condition $(ii)$ is equivalent to condition $(2)$, and this concludes the proof.
\end{proof}

Theorem \ref{main} has even more general consequences,
in that it allows considering sum-sets where some of the sets may be repeated.
The next simple example is given illustrate the property; other more intricate examples
can also be obtained in the same fashion.

\begin{example}\label{example-B+B+C}
Let $\Phi$ be the set of all surjective functions $\varphi:\{1,2,3\}\to\{1,2\}$
such that the preimage $\varphi^{-1}(\{1\})$ contains two elements, and 
the preimage $\varphi^{-1}(\{2\})$ contains one element.
Given $X\subseteq I^3$, if we let $X^\varphi=X$ for all $\varphi\in\Phi$,
then by Theorem \ref{main} we obtain the equivalence of the following two conditions:
\begin{itemize}
\item 
There exist distinct non-principal ultrafilters $\U$ and $\V$ on $I$ such that 
$$X\in(\U\otimes\U\otimes\V)\cap(\U\otimes\V\otimes\U)\cap(\V\otimes\U\otimes\U).$$
\item
There exist disjoint 1-1 sequences $(b_n)$ and $(c_n)$ in $I$ such that
\begin{multline*}
\hspace{1cm}
\{(b_{j_1},b_{j_2},c_{j_3})\mid j_1<j_2\le j_3\} \cup
\\
\cup \{(b_{j_1},c_{j_3},b_{j_2})\mid j_1\le j_3< j_2\}\cup
\{(c_{j_3},b_{j_1},b_{j_2})\mid j_3<j_1<j_2\}\subseteq X.
\end{multline*}
\end{itemize}
Moreover, if $I=\N$ then we can take the sequences
$(b_n)$ and $(c_n)$ to be increasing.
\end{example}

As a consequence of the above example, we obtain:

\begin{proposition}\label{B+B+C}
Let $A\subseteq\N$. The following properties are equivalent:
\begin{enumerate}
\item
There exist disjoint infinite sets $B$ and $C$
with $(B\oplus B)+C\subseteq A$.
\item
There exist distinct non-principal ultrafilters $\U$ and $\V$ on $\N$ such that 
\\
$A\in(\U\oplus\U\oplus\V)\cap(\U\oplus\V\oplus\U)\cap(\V\oplus\U\oplus\U)$.
\end{enumerate}
\end{proposition}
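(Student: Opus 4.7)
The plan is to apply Example~\ref{example-B+B+C} with $I=\N$ to the preimage set $X:=\text{Sum}_3^{-1}(A)=\{(n_1,n_2,n_3)\in\N^3\mid n_1+n_2+n_3\in A\}$. The overall strategy mirrors how Proposition~\ref{B+C} is derived from Example~\ref{example-B+C}: translate the pseudo-sum condition into a tensor-product condition on $X$, invoke the combinatorial characterisation provided by the example, and finally convert the resulting triples back into a statement about sums.

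First I would record that, by the definition of $\oplus$ and its $k$-fold iteration, for any ultrafilters $\U_1,\U_2,\U_3$ on $\N$ one has
$$X\in\U_1\otimes\U_2\otimes\U_3\ \Longleftrightarrow\ A\in\U_1\oplus\U_2\oplus\U_3.$$
Consequently condition (2) is equivalent to the existence of distinct non-principal $\U,\V$ on $\N$ with $X\in(\U\otimes\U\otimes\V)\cap(\U\otimes\V\otimes\U)\cap(\V\otimes\U\otimes\U)$, which is precisely the ultrafilter hypothesis appearing in Example~\ref{example-B+B+C}.

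Next I would invoke that example with the given $X$ to obtain, equivalently, disjoint increasing sequences $(b_n),(c_n)$ in $\N$ such that the three sets
$$\{(b_{j_1},b_{j_2},c_{j_3})\mid j_1<j_2\le j_3\},\ \ \{(b_{j_1},c_{j_3},b_{j_2})\mid j_1\le j_3<j_2\},\ \ \{(c_{j_3},b_{j_1},b_{j_2})\mid j_3<j_1<j_2\}$$
are all contained in $X$. Setting $B:=\{b_n\mid n\in\N\}$ and $C:=\{c_n\mid n\in\N\}$, these are disjoint infinite subsets of $\N$, and the final step is to check that the sequence condition is equivalent to $(B\oplus B)+C\subseteq A$. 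Writing a generic element of $(B\oplus B)+C$ as $b_i+b_j+c_k$ with $i<j$, I would perform a three-way case split on the position of $k$: either $j\le k$, or $i\le k<j$, or $k<i$. These three regimes correspond bijectively to the three index patterns above, so in each case the appropriate permutation of $(b_i,b_j,c_k)$ lands in $X$, giving $b_i+b_j+c_k\in A$. The converse direction is a direct inspection, using that $(b_n)$ is strictly increasing so that the indices $j_1<j_2$ produce distinct elements of $B$ as required by the definition of $B\oplus B$.

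The main (minor) obstacle is the bookkeeping in this last step: verifying that the three index-configurations of Example~\ref{example-B+B+C} exhaust exactly the three possible placements of the $c$-index relative to the two $b$-indices $i<j$. Once this observation is in hand, the proof is a direct translation of Example~\ref{example-B+B+C} into the language of sum-sets, entirely analogous to how Proposition~\ref{B+C} follows from Example~\ref{example-B+C}.
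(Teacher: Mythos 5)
Your proposal is correct and follows essentially the same route as the paper: pass to $X=\text{Sum}_3^{-1}(A)$, apply Example~\ref{example-B+B+C}, and check that the three index-configurations ($i<j\le k$, $i\le k<j$, $k<i<j$) exhaust the placements of the $c$-index relative to $i<j$, which is exactly the verification the paper leaves as "straightforward."
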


\begin{proof}
Let $X:=\text{Sum}_3^{-1}(A)\subseteq\N^3$. By Example \ref{example-B+B+C}
there exist non-principal ultrafilters $\U$ and $\V$ on $\N$
such that $X\in(\U\otimes\U\otimes\V)\cap(\U\otimes\V\otimes\U)\cap(\V\otimes\U\otimes\U)$
if and only if there exist disjoint increasing sequences $(b_n)$ and $(c_n)$ with 
\begin{enumerate}
\item[$(i)$]
$b_{j_1}+b_{j_2}+c_{j_3}\in A$ for all $j_1<j_2\le j_3$;
\item[$(ii)$]
$b_{j_1}+c_{j_3}+b_{j_2}\in A$ for all $j_1\le j_3< j_2$;
\item[$(iii)$]
$c_{j_3}+b_{j_1}+b_{j_2}\in A$ for all $j_3<j_1<j_2$.
\end{enumerate}
Let $B:=\{b_n\mid n\in\N\}$ and $C:=\{c_n\mid n\in\N\}$.  
By the definitions, $X\in(\U\otimes\U\otimes\V)\cap(\U\otimes\V\otimes\V)\cap(\V\otimes\U\otimes\U)$
if and only if $A\in(\U\oplus\U\oplus\V)\cap(\U\oplus\V\oplus\V)\cap(\V\oplus\U\oplus\U)$;
and $(x,y,z)\in X\Leftrightarrow x+y+z\in A$.
Finally, it can be verified in a straightforward manner
that $(B\oplus B)+C:=\{b_{j_1}+b_{j_2}+c_{j_3}\mid j_1<j_2\}\subseteq A$ if and only if
the above three conditions $(i),(ii),(iii)$ hold.
\end{proof}

Let us now formulate the general version of the previous property.

\begin{theorem}\label{generalsumsets}
Let $A\subseteq\N$ and let $n_1,\ldots,n_k\in\N$. The following properties are equivalent:
\begin{enumerate}
\item
There exist disjoint infinite sets $B_1,\ldots,B_k$ such that
$$(B_1)^{n_1\oplus}+\ldots+(B_k)^{n_k\oplus}\subseteq A.$$
\item
There exist distinct non-principal ultrafilters $\U_1,\ldots,\U_k$ on $\N$ such that 
\\
$$A\in\bigcap_{\varphi\in\Phi}\U_{\varphi(1)}\oplus\cdots\oplus\U_{\varphi(n)}$$
where $n=n_1+\ldots+n_k$, and $\Phi$ is the set of all (surjective) functions 
$\varphi:\{1,\ldots,n\}\to\{1,\ldots,k\}$
where for every $s=1,\ldots,k$ the preimage $\varphi^{-1}(\{s\})$ contains $n_s$ elements.
\end{enumerate}
\end{theorem}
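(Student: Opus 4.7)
The plan is to reduce to Theorem~\ref{main} by taking preimages under the sum function, following the strategy used for Proposition~\ref{Bk}. Let $n := n_1 + \ldots + n_k$ and $X := \text{Sum}_n^{-1}(A) \subseteq \N^n$. By the definition of the pseudo-sum, for each $\varphi \in \Phi$ one has $A \in \U_{\varphi(1)} \oplus \cdots \oplus \U_{\varphi(n)}$ if and only if $X \in \U_{\varphi(1)} \otimes \cdots \otimes \U_{\varphi(n)}$, so condition~(2) translates into the corresponding tensor-product statement for the single set $X$. Now apply Theorem~\ref{main} with the sets all equal to $\N$ (so that its ``$m$'' becomes our $k$ and its ``$k$'' becomes our $n$) and with $X^\varphi := X$ for every $\varphi \in \Phi$. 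This yields the equivalence of~(2) with the existence of disjoint increasing sequences $(a_{\ell,n})_{n \in \N}$ in $\N$ for $\ell = 1, \ldots, k$ such that, for every $\varphi \in \Phi$ and every $(j_1, \ldots, j_n) \in \N^n$ with $j_1 \le \ldots \le j_n$ and $j_s < j_{s+1}$ whenever $\varphi(s) \ge \varphi(s+1)$, the sum $a_{\varphi(1), j_1} + \ldots + a_{\varphi(n), j_n}$ lies in $A$.

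Setting $B_\ell := \{a_{\ell,n} : n \in \N\}$ produces disjoint infinite sets; it remains to show that the displayed condition on the sequences is equivalent to $(B_1)^{n_1\oplus} + \ldots + (B_k)^{n_k\oplus} \subseteq A$. For the direction asserting containment in $A$, take a generic element $\sum_{\ell=1}^k \sum_{t=1}^{n_\ell} a_{\ell, i^{(\ell)}_t}$ of the sumset, with $i^{(\ell)}_1 < \ldots < i^{(\ell)}_{n_\ell}$ for each $\ell$, and sort the $n$ index--color pairs $(i^{(\ell)}_t, \ell)$ lexicographically, by index first and by color to break ties. Reading off indices and colors from the sorted list produces data $(j_1, \varphi(1)), \ldots, (j_n, \varphi(n))$ with $\varphi \in \Phi$, $j_1 \le \ldots \le j_n$, and, since within each color the indices are distinct, ties in the $j$-values only occur between distinct colors listed in increasing order, which gives the required contrapositive $\varphi(s) \ge \varphi(s+1) \Rightarrow j_s < j_{s+1}$. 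For the reverse direction, one must show that whenever $\varphi \in \Phi$ and $(j_s)$ satisfy the hypotheses, the sum $\sum_s a_{\varphi(s), j_s}$ belongs to the sumset, equivalently that the $j$-values at positions of each fixed color $\ell$ are strictly increasing. If two consecutive $\ell$-positions $s' < s''$ satisfied $j_{s'} = j_{s''}$, monotonicity would force all $j$-values in $[s', s'']$ to coincide, and then the condition $\varphi(s) \ge \varphi(s+1) \Rightarrow j_s < j_{s+1}$ would force strict increase $\varphi(s') < \varphi(s'+1) < \ldots < \varphi(s'')$, contradicting $\varphi(s') = \varphi(s'') = \ell$.

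The main obstacle is precisely this last combinatorial lemma: monotonicity of $(j_s)$ together with a forced strict jump at each $\varphi$-descent must propagate to strict increase of $j$-values within each color class. Once this lemma is in hand, the rest of the proof is the translation between Theorem~\ref{main}'s hypothesis and the generalized-sumset language---notationally involved but routine.
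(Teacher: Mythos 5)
Your proof is correct and follows exactly the route the paper intends: the paper omits the proof of Theorem~\ref{generalsumsets}, stating only that it is obtained as in Proposition~\ref{B+B+C} (i.e.\ by pulling $A$ back under $\mathrm{Sum}_n$ and invoking Theorem~\ref{main} with $X^\varphi=\mathrm{Sum}_n^{-1}(A)$ for all $\varphi\in\Phi$), which is precisely your reduction. Your lexicographic sorting of the index--colour pairs and the propagation argument showing that equal $j$-values within a colour class are impossible correctly supply the combinatorial bookkeeping that the paper leaves to the reader.
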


The proof is obtained in the same way as in Proposition \ref{B+B+C}, 
only with heavier notation, and is omitted.

\smallskip
We remark that a variety of even more general properties can be obtained as consequences of Theorem \ref{main};
the following is just one of those.

\begin{example}\label{example-B+C+C+D+D}
Let $\Phi$ be the set of all surjective functions $\varphi:\{1,2,3,4,5\}\to\{1,2,3\}$
such that the preimage $\varphi^{-1}(\{1\})=\{1\}$, and both preimages 
$\varphi^{-1}(\{2\})$ and $\varphi^{-1}(\{3\})$ contain two elements.
Given $X\subseteq I^5$, if we let $X^\varphi=X$ for all $\varphi\in\Phi$,
by Theorem \ref{main} we obtain that the following two conditions are equivalent:
\begin{itemize}
\item
There exist distinct non-principal ultrafilters $\U, \V, \W$ on $I$ such that 
$X\in(\U\otimes\V\otimes\V\otimes\W\otimes\W)\cap(\U\otimes\V\otimes\W\otimes\V\otimes\W)\cap
(\U\otimes\V\otimes\W\otimes\W\otimes\V)\cap(\U\otimes\W\otimes\V\otimes\V\otimes\W)\cap
(\U\otimes\W\otimes\V\otimes\W\otimes\V)\cap(\U\otimes\W\otimes\W\otimes\V\otimes\V)$
\item
There exist
disjoint 1-1 sequences $(b_n), (c_n), (d_n)$ in $I$ such that
\begin{multline*}
\hspace{1cm}
\{(b_{j_1},c_{j_2},c_{j_3},d_{j_4},d_{j_5})\mid j_1\le j_2<j_3\le j_4<j_5\}\cup
\\
\cup \{(b_{j_1},c_{j_3},d_{j_4},c_{j_3},d_{j_5})\mid j_1\le j_3\le j_4<j_3\le j_5\}\cup
\\
\cup \{(b_{j_1},c_{j_2},d_{j_4},d_{j_5},c_{j_3})\mid j_1\le j_2\le j_4<j_5<j_3\}\cup
\\
\cup \{(b_{j_1},d_{j_4},c_{j_2},c_{j_3},d_{j_5})\mid j_1\le j_4< j_2<j_3\le j_5\}\cup
\\
\cup \{(b_{j_1},d_{j_4},c_{j_2},d_{j_5},c_{j_3})\mid j_1\le j_4< j_2\le j_5<j_2\}\cup
\\
\cup \{(b_{j_1},d_{j_4},d_{j_5},c_{j_2},c_{j_3})\mid j_1\le j_4<j_5< j_2<j_3\}\subseteq X.
\end{multline*}
\end{itemize}
Moreover, if $I=\N$ then we can take the sequences
$(b_n), (c_n), (d_n)$ to be increasing.
\end{example}

As a consequence, we obtain:

\begin{proposition}\label{B+C+C+D+D}
Let $A\subseteq\N$. The following properties are equivalent:
\begin{enumerate}
\item
There exist disjoint infinite sets $B, C, D$ with 
\begin{multline*}
\hspace{1cm}\{b+c+c'+d+d'\mid 
b\in B\ \text{and}\ c, c'\in C\ \text{and}\ d, d'\in D 
\\
\text{and}\ c\ne c'\ \text{and}\ d\ne d'\ \text{and}\ b<c, c', d, d'\}\subseteq A.
\end{multline*}
\item
There exist distinct non-principal ultrafilters $\U, \V, \W$ on $\N$ such that 

\smallskip
\noindent
{\small{$A\in(\U\oplus\V\oplus\V\oplus\W\oplus\W)\cap(\U\oplus\V\oplus\W\oplus\V\oplus\W)\cap
(\U\oplus\V\oplus\W\otimes\W\oplus\V)\cap(\U\oplus\W\oplus\V\oplus\V\oplus\W)\cap
(\U\oplus\W\oplus\V\oplus\W\oplus\V)\cap(\U\oplus\W\oplus\W\oplus\V\oplus\V)$.}}
\end{enumerate}
\end{proposition}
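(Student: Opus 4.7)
The plan is to imitate the proof of Proposition \ref{B+B+C}, transferring the combinatorial content of Example \ref{example-B+C+C+D+D} from $X\subseteq\N^5$ to $A\subseteq\N$ through the sum map. Specifically, let $X:=\text{Sum}_5^{-1}(A)=\{(n_1,\ldots,n_5)\in\N^5\mid n_1+n_2+n_3+n_4+n_5\in A\}$. Since $\oplus$ is the image ultrafilter under $\text{Sum}_k$, for every $\varphi\in\Phi$ one has
$$A\in\U_{\varphi(1)}\oplus\cdots\oplus\U_{\varphi(5)}\ \Longleftrightarrow\ X\in\U_{\varphi(1)}\otimes\cdots\otimes\U_{\varphi(5)},$$
so condition (2) of the proposition is exactly the ultrafilter statement appearing in the first bullet of Example \ref{example-B+C+C+D+D} applied to $X$.

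By that example, the ultrafilter statement is in turn equivalent to the existence of disjoint increasing sequences $(b_n),(c_n),(d_n)$ such that the union of the six explicit sets of $5$-tuples is contained in $X$. Setting $B:=\{b_n\mid n\in\N\}$, $C:=\{c_n\mid n\in\N\}$, $D:=\{d_n\mid n\in\N\}$, and using the equivalence $(x_1,\ldots,x_5)\in X\Leftrightarrow x_1+\cdots+x_5\in A$, each of the six inclusions becomes a statement that certain sums of the form $b_{j_1}+c_{j_a}+c_{j_b}+d_{j_c}+d_{j_d}$ lie in $A$. The remaining task is to show that the conjunction of these six families of sums being in $A$ is equivalent to the sumset condition in (1).

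For the forward direction, condition (1) gives $B,C,D$ which we enumerate in increasing order; for any $5$-tuple in one of the six listed forms, the two $C$-entries are distinct and the two $D$-entries are distinct (because the corresponding indices differ strictly), and $b$ is below all other entries (because $j_1$ is weakly below the other indices and the sequences are strictly increasing), so the sum lies in $A$ by (1). Conversely, given $b\in B$, distinct $c,c'\in C$, distinct $d,d'\in D$ with $b$ below them, write $b=b_{j_1}$, $\{c,c'\}=\{c_p,c_q\}$ with $p<q$, and $\{d,d'\}=\{d_r,d_s\}$ with $r<s$. The $\binom{4}{2}=6$ possible interleavings of $\{p,q\}$ with $\{r,s\}$ correspond exactly to the six cases of the example, and $b<c,c',d,d'$ together with increasing monotonicity forces $j_1\le\min(p,q,r,s)$, so the appropriate case applies and delivers the sum in $A$.

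The main obstacle is the careful bookkeeping needed to verify that the six interleavings of Example \ref{example-B+C+C+D+D} really exhaust all orderings arising when $b<c,c',d,d'$ with $c\ne c'$ and $d\ne d'$, and that the strict versus non-strict inequalities in each case match the ``good ordering'' constraints automatically supplied by strict monotonicity of the sequences. Once this combinatorial audit is done, the result follows by direct translation through the sum map.
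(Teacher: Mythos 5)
Your overall strategy is the same as the paper's: pull $A$ back through the sum map, apply Example \ref{example-B+C+C+D+D} to $X=\text{Sum}_5^{-1}(A)$, and translate the six families of index-constrained sums into the value-constrained sumset of condition (1). The reduction of (2) to the six conditions is fine, and your observation that the six interleavings of $\{p,q\}$ with $\{r,s\}$ exhaust all orderings is correct. The gap is in the translation step, in both directions. You assert that, because the three sequences are increasing, the index inequality $j_1\le j_2,\ldots$ yields the value inequality $b_{j_1}<c_{j_2},\ldots$, and conversely that $b<c,c',d,d'$ ``forces $j_1\le\min(p,q,r,s)$''. Neither implication holds: monotonicity of each sequence separately says nothing about comparisons between elements of \emph{different} sequences. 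If every element of $B$ exceeds every element of $C$, then $j_1\le j_2$ gives no control on $b_{j_1}$ versus $c_{j_2}$, so condition (1) cannot be invoked for the tuples demanded by case $(i)$ of the example; in the other direction, taking $b_n=3n$ and $c_n=3\cdot 2^n+1$ one has $b_4=12<13=c_2$ while $4>2$, so the hypothesis $b<c,c',d,d'$ does not place $j_1$ below the other indices and none of the six cases need apply to the tuple at hand.

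The paper repairs this with a normalisation that your write-up omits: after obtaining the sequences, one removes elements from $C$ and $D$ so that $b_n<c_n,d_n$ for every $n$. This is harmless because shrinking $C$ and $D$ preserves condition (1), and it also preserves the conjunction of the six index conditions (passing to subsequences of $C$ and $D$ only increases the indices $j_2,\ldots,j_5$ relative to $j_1$, and the six cases jointly cover every tuple in which $j_1$ is at most the minimum of the other indices). With this normalisation your forward direction goes through, since $j_1\le j_2$ gives $b_{j_1}\le b_{j_2}<c_{j_2}$, and likewise for the $d$'s. For the converse direction you need to go further and extract value-interleaved subsequences, say $b_{n_1}<c_{m_1},d_{l_1}<b_{n_2}<c_{m_2},d_{l_2}<\cdots$ with $n_1\le m_1,l_1<n_2\le m_2,l_2<\cdots$, so that a value comparison $b_{n_j}<c_{m_p}$ genuinely forces $n_j\le m_p$ and one of the six cases applies. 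Without some such arrangement the ``combinatorial audit'' you defer to is not merely bookkeeping: it is the point where the argument currently fails.
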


\begin{proof}
Apply the property of Example \ref{example-B+C+C+D+D} to the set $X:=\text{Sum}_5^{-1}(A)\subseteq\N^5$.
Then there exist distinct non-principal ultrafilters $\U, \V, \W$ on $\N$ such that 
$A$ satisfies condition (2), if and only if there exist disjoint
increasing sequences $(b_n), (c_n), (d_n)$ with 
\begin{enumerate}
\item[$(i)$]
$b_{j_1}+c_{j_2}+c_{j_3}+d_{j_4}+d_{j_5}\in A$ for all $j_1\le j_2<j_3\le j_4<j_5$;
\item[$(ii)$]
$b_{j_1}+c_{j_2}+d_{j_4}+c_{j_3}+d_{j_5}\in A$ for all $j_1\le j_2\le j_4<j_3\le j_5$;
\item[$(iii)$]
$b_{j_1}+c_{j_2}+d_{j_4}+d_{j_5}+c_{j_3}\in A$ for all $j_1\le j_2\le j_4<j_5<j_3$;
\item[$(iv)$]
$b_{j_1}+d_{j_4}+c_{j_2}+c_{j_3}+d_{j_5}\in A$ for all $j_1\le j_4< j_2<j_3\le j_5$;
\item[$(v)$]
$b_{j_1}+d_{j_4}+c_{j_2}+d_{j_5}+c_{j_3}\in A$ for all $j_1\le j_4<j_2\le j_5<j_3$;
\item[$(vi)$]
$b_{j_1}+d_{j_4}+d_{j_5}+c_{j_2}+c_{j_3}\in A$ for all $j_1\le j_4<j_5< j_2<j_3$.
\end{enumerate}

Let $B:=\{b_n\mid n\in\N\}$, $C:=\{c_n\mid n\in\N\}$, and $D:=\{d_n\mid n\in\N\}$.
By removing some elements from $C$ and $D$ if necessary, we can assume that
$b_n<c_n,d_n$ for every $n$, so that the six conditions above
are equivalent to the desired property (1). 
\end{proof}

\begin{remark}
The characterisations presented above about sum-sets and pseudo-sums between ultrafilters
also holds for ``product-sets" 
and ``pseudo-products" between ultrafilters.
Indeed, for infinite sets $A=\{a_n\mid n\in\N\}$ and $B=\{b_n\mid n\in\N\}$
where elements are arranged in increasing order, one define their product-set
by letting $A\odot B=\{a_n\cdot b_m\mid n<m\}$.
Besides, in the same way as $\U\oplus\V=\text{Sum}(\U\otimes\V)$ is defined as the image 
of the tensor product $\U\otimes\V$
under the sum map $\text{Sum}:(n,m)\mapsto n+m$, one defines
the pseudo-product $\U\odot\V=\text{Prod}(\U\otimes\V)$ 
as the image of the tensor product $\U\otimes\V$
under the product map $\text{Prod}:(n,m)\mapsto n\cdot m$.
Equivalently, the pseudo-product is defined by setting
$A\in\U\odot\V\Leftrightarrow\{n\in\N\mid A/n\in\V\}\in\U$,
where $A/n=\{m\in\N\mid m\cdot n \in A\}$.

More generally, it is well known that one can even define
a canonical extension of any semigroup $(S,\star)$ to a semigroup 
$(\beta S,{\footnotesize{\textcircled{$\star$}}})$ on the space of ultrafilters,
by letting
$\U{\footnotesize{\textcircled{$\star$}}}\V=\text{Star}(\U\otimes\V)$ be the ultrafilter image of the tensor product
$\U\otimes\V$ under the map $\text{Star}:(n,m)\mapsto n\star m$.\footnote
{~This means that $\U{\footnotesize{\textcircled{$\star$}}}\V$ is the ultrafilter on $S\times S$
such that, for every $X\subseteq S\times S$:
$$X\in\U{\footnotesize{\textcircled{$\star$}}}\V\Leftrightarrow
\{s\in S\mid\{t\in S\mid s\star t\in X\}\in\V\}\in\U.$$}
(See \cite[Ch.4]{hs} for a detailed discussion of this general case.)
We observe that when the operation $\star$ is commutative, one
obtains similar characterizations as the ones we presented above where one considers
``star-sets" $A{\footnotesize{\textcircled{$\star$}}}B=\{a_n\star b_m\mid n<m\}$
and ``pseudo-stars" $\U{\footnotesize{\textcircled{$\star$}}}\V$ between ultrafilters
on $S$.
\end{remark}

\medskip
\section{Proof of Theorem \ref{main}}\label{sec-proofs}

Before proving Theorem \ref{main} in its generality, let us first consider 
a special case that does not require heavy notation, so as to give a 
more transparent idea of the construction.

\begin{example}
Let $\U,\V$ be non-principal ultrafilters on $\N$ and assume that
\begin{itemize}
\item
$X^\varphi\in\U\otimes\U\otimes\V$;
\item
$X^\chi\in \U\otimes\V\otimes\U$; 
\item
$X^\psi\in\U\otimes\V\otimes\V$.
\item
$X^\omega\in\V\otimes\U\otimes\U$.
\end{itemize}
Then there exists 1-1 sequences $(a_n)$, $(b_n)$ with disjoint ranges such that:\footnote{
~Clearly, this corollary is one implication in the particular case of Theorem \ref{main} where 
$k=3$, $m=2$, $\U_1=\U$, $\U_2=\V$, and
$\Phi=\{\varphi,\chi,\psi,\omega\}$ where the functions
$\varphi,\chi,\psi,\omega:\{1,2,3\}\to\{1,2\}$ are such that
\begin{itemize}
\item
$\varphi(1)=\varphi(2)=1$ and $\varphi(3)=2$.
\item
$\chi(1)=\chi(3)=1$ and $\chi(2)=2$.
\item
$\psi(1)=1$ and $\psi(2)=\psi(3)=2$.
\item
$\omega(1)=2$ and $\omega(2)=\omega(3)=1$.
\end{itemize}}
\begin{enumerate}
\item
$(a_{j_1},a_{j_2},b_{j_3})\in X^\varphi$ for all $j_1<j_2\le j_3$.
\item
$(a_{j_1},b_{j_2},a_{j_3})\in X^\chi$ for all $j_1\le j_2<j_3$.
\item
$(a_{j_1},b_{j_2},b_{j_3})\in X^\psi$ for all $j_1\le j_2<j_3$.
\item
$(b_{j_1},a_{j_2},a_{j_3})\in X^\omega$ for all $j_1<j_2< j_3$.
\end{enumerate}
\end{example}

Let us outline the proof of the above property.
For $X\subseteq \N^3$, elements $a,a'\in\N$, and ultrafilters $\Xc,\Yc,\Zc$ on $\N$, denote by
\begin{itemize}
\item
$X_a:=\{(a',a'')\in \N^2\mid (a,a',a'')\in X\}$.
\item
$X_{a,a'}=\{a''\in\N\mid (a,a',a'')\in X\}$.
\item
$(X_a)_\Zc:=\{a'\in \N\mid X_{a,a'}\in\Zc\}$.
\item
$X_{\Yc,\Zc}=
\{a\in\N\mid (X_a)_\Zc\in\Yc\}$.
\end{itemize}

Note that

\begin{itemize}
\item
$X_{\Yc,\Zc}\in\Xc\Leftrightarrow X\in\Xc\otimes\Yc\otimes\Zc$.
\end{itemize}

\smallskip
We proceed by recursion on $n\in\N$, and 
define elements $a_n, b_n\in\N$ 
so that the following properties are satisfied:

\begin{enumerate}
\item
$b_n>a_n>b_{n-1}$.
\item
$a_{j_1}\in X^\varphi_{\U,\V}$, $a_{j_2}\in (X^\varphi_{a_{j_1}})_\V$, and
$(a_{j_1},a_{j_2},b_{j_3})\in X^\varphi$ for all $j_1<j_2\le j_3\le n$.
\item
$a_{j_1}\in X^\chi_{\V,\U}$, $b_{j_2}\in (X^\chi_{a_{j_1}})_\U$, and
$(a_{j_1},b_{j_2},a_{j_3})\in X^\chi$ for all $j_1\le j_2<j_3\le n$.
\item
$a_{j_1}\in X^\psi_{\V,\V}$, $b_{j_2}\in (X^\psi_{a_{j_1}})_\V$, and
$(a_{j_1},b_{j_2},b_{j_3})\in X^\psi$ for all $j_1\le j_2< j_3\le n$.
\item
$b_{j_1}\in X^\omega_{\U,\U}$, $a_{j_2}\in (X^\omega_{b_{j_1}})_\U$, and
$(b_{j_1},a_{j_2},a_{j_3})\in X^\omega$ for all $j_1<j_2<j_3\le n$.
\end{enumerate}

It is readily seen that the above properties yield the desired result.

\smallskip
Let us start with the base step $n=1$.
By the hypotheses,
\begin{itemize}
\item
$X^\varphi_{\U,\V}\in\U$, $X^\chi_{\V,\U}\in\U$, $X^\psi_{\V,\V}\in\U$,
$X^\omega_{\U,\U}\in\V$.
\end{itemize}
So, we can pick an element 
$a_1\in X^\varphi_{\U,\V}\cap X^\chi_{\V,\U}\cap X^\psi_{\V,\V}\in\U$.
Then
\begin{itemize}
\item
$(X^\varphi_{a_1})_\V\in\U$, $(X^\chi_{a_1})_\U\in\V$,
$(X^\psi_{a_1})_\V\in\V$.
\end{itemize}
Pick an element $b_1\in X^\omega_{\U,\U}\cap(X^\chi_{a_1})_\U\cap(X^\psi_{a_1})_\V\in\V$ 
with $b_1>a_1$. Then
\begin{itemize}
\item
$(X^\omega_{b_1})_\U\in\U$, $X^\chi_{a_1,b_1}\in\U$, $X^\psi_{a_1,b_1}\in\V$.
\end{itemize}
It is readily seen that $a_1, b_1$ satisfy the desired properties.

\smallskip
Let us turn to the step $n=2$. Pick an element 
$$a_2\in X^\varphi_{\U,\V}\cap X^\chi_{\V,\U}\cap X^\psi_{\V,\V}\cap
(X^\varphi_{a_1})_\V\cap(X^\omega_{b_1})_\U\cap X^\chi_{a_1,b_1}\in\U.$$
with $a_2>b_1$. (This is possible because all sets that belong to the non-principal ultrafilter
$\U$ are infinite). Then
\begin{itemize}
\item
$(X^\varphi_{a_2})_\V\in\U$, $(X^\chi_{a_2})_\U\in\V$,
$(X^\psi_{a_2})_\V\in\V$.
\item
$X^\varphi_{a_1,a_2}\in\V$, $X^\omega_{b_1,a_2}\in\U$.
\item
$(a_1,b_1,a_2)\in X^\chi$.
\end{itemize}
Pick an element 
$$b_2\in X^\omega_{\U,\U}\cap(X^\chi_{a_1})_\U\cap(X^\psi_{a_1})_\V\cap
(X^\chi_{a_2})_\U\cap(X^\psi_{a_2})_\V\cap X^\psi_{a_1,b_1}\cap X^\varphi_{a_1,a_2}\in\V$$
with $b_2>a_2$. (This is possible because all sets that belong to the non-principal ultrafilter
$\V$ are infinite). Then
\begin{itemize}
\item
$(X^\omega_{b_2})_\U\in\U$.
\item
$X^\chi_{a_1,b_2}\in\U$, $X^\psi_{a_1,b_2}\in\V$, 
$X^\chi_{a_2,b_2}\in\U$, $X^\psi_{a_2,b_2}\in\V$.
\item
$(a_1,b_1,b_2)\in X^\psi$, $(a_1,a_2,b_2)\in X^\varphi$.
\end{itemize}
Let us continue one more step to better illustrate the process.
Pick an element 
$$a_3\in X^\varphi_{\U,\V}\cap X^\chi_{\V,\U}\cap X^\psi_{\V,\V}\cap
(X^\varphi_{a_1})_\V\cap(X^\omega_{b_1})_\U\cap (X^\omega_{b_2})_\U\cap
X^\chi_{a_1,b_1}\cap X^\chi_{a_1,b_2}\cap X^\chi_{a_2,b_2}\in\U$$
with $a_3>b_2$. Then
\begin{itemize}
\item
$(X^\varphi_{a_3})_\V\in\U$, $(X^\chi_{a_3})_\U\in\V$,
$(X^\psi_{a_3})_\V\in\V$.
\item
$X^\varphi_{a_1,a_3}\in\V$, $X^\omega_{b_1,a_3}\in\U$,
$X^\omega_{b_2,a_3}\in\U$.
\item
$(a_1,b_1,a_3)\in X^\chi$, $(a_1,b_2,a_3)\in X^\chi$, $(a_2,b_2,a_3)\in X^\chi$.
\end{itemize}
Then pick an element 
\begin{multline*}
b_3\in X^\omega_{\U,\U}\cap(X^\chi_{a_1})_\U\cap(X^\psi_{a_1})_\V\cap
(X^\chi_{a_2})_\U\cap(X^\psi_{a_2})_\V\cap 
(X^\chi_{a_3})_\U\cap (X^\psi_{a_3})_\V\cap
\\
\cap X^\psi_{a_1,b_1}\cap X^\varphi_{a_1,a_2}\cap
X^\psi_{a_1,b_2}\cap X^\psi_{a_2,b_2}\cap X^\varphi_{a_1,a_3}\in\V
\end{multline*}
with $b_3>a_3$. Then
\begin{itemize}
\item
$(X^\omega_{b_3})_\U\in\U$.
\item
$X^\chi_{a_1,b_3}\in\U$, $X^\psi_{a_1,b_3}\in\V$, 
$X^\chi_{a_2,b_3}\in\U$, $X^\psi_{a_2,b_3}\in\V$,
$X^\chi_{a_3,b_3}\in \U$, $X^\psi_{a_3,b_3}\in\V$.
\item
$(a_1,b_1,b_3)\in X^\psi$, $(a_1,a_2,b_3)\in X^\varphi$, 
$(a_1,b_2,b_3)\in X^\psi$, $(a_2,b_2,b_3)\in X^\psi$, $(a_1,a_3,b_3)\in X^\varphi$.
\end{itemize}

By iterating the process, one obtains the sequences $(a_n)$ and $(b_n)$ with the desired properties.

\smallskip
Let us now see the complete proof of the general case.

\medskip
\noindent
\textbf{Theorem \ref{main}}
\emph {Let $\Phi$ be a nonempty set of functions
from $\{1,\ldots,k\}$ onto $\{1,\ldots,m\}$,
let $I_1,\ldots,I_m$ be infinite sets,
and for every $\varphi\in\Phi$ let $X^\varphi\subseteq I_{\varphi(1)}\times\ldots\times I_{\varphi(k)}$. 
Then the following properties are equivalent:
\begin{enumerate}
\item
There exist (distinct) non-principal ultrafilters $\U_\ell$ on $I_\ell$ for $\ell=1,\ldots,m$
such that for every $\varphi\in\Phi$ one has
$X^\varphi\in\U_{\varphi(1)}\otimes\cdots\otimes\U_{\varphi(k)}$
\item
There exist sequences $(a_{\ell,n})_{n\in\N}$ in $I_\ell$ for $\ell=1,\ldots,m$ such that:
\begin{itemize}
\item[$(a)$]
$a_{\ell,n}\ne a_{\ell',n'}$ for $(\ell,n)\ne(\ell',n')$; \emph{i.e.}, every $(a_{\ell,n})_{n\in\N}$ is 1-1
and the ranges $\{a_{\ell,n}\mid n\in\N\}$ are pairwise disjoint sets.
\item[$(b)$]
If $I_\ell=\N$ we can take the sequence $(a_{\ell,n})_n$ to be increasing.
\item[$(c)$]
For every $\varphi\in\Phi$, and for 
every $(j_1,\ldots,j_m)\in\N^m$ where $j_1\le\ldots\le j_m$ 
and where $j_s<j_{s+1}$ whenever $\varphi(s)\ge\varphi(s+1)$, it is
$$(a_{\varphi(1),j_1},\ldots,a_{\varphi(k),j_k})\in X^\varphi.$$
\end{itemize}
\end{enumerate}}

\begin{proof}[\textbf{Proof of Theorem \ref{main}}]

$(2)\Rightarrow(1)$. 
Pick any non-principal ultrafilter $\V$ on $\N$.
For every $\ell=1,\ldots,k$, let $\theta_\ell:\N\to I_\ell$ be the
function where $\theta_\ell:n\mapsto a_{\ell,n}$, and let
$\U_\ell:=\theta_\ell(\V)$ be the image ultrafilter on $I_\ell$.
Note that, since $\V$ is non-principal and $\theta_\ell$ is 1-1, then $\U_\ell$ is also non-principal.
We observe that for every $\varphi\in\Phi$, we have:
$$\Lambda^\varphi:=\{(a_{\varphi(1),j_1},\ldots,a_{\varphi(k),j_k})\in I_{\varphi(1)}\times\ldots\times I_{\varphi(k)}
\mid j_1<\ldots<j_k\}\in
\U_{\varphi(1)}\otimes\ldots\otimes\U_{\varphi(k)}.$$
Indeed, by using property (2) of Proposition \ref{tensorproperties} and Proposition \ref{superdiagonal}, we have
\begin{multline*}
\Lambda^\varphi\in\U_{\varphi(1)}\otimes\ldots\otimes\U_{\varphi(k)}=
(\theta_{\varphi(1)},\ldots,\theta_{\varphi(k)})(\V\otimes\ldots\otimes\V)\Leftrightarrow
\\
\Leftrightarrow (\theta_{\varphi(1)},\ldots,\theta_{\varphi(k)})^{-1}(\Lambda^\varphi)=
\{(j_1,\ldots,j_k)\mid j_1<\ldots<j_k\}=
\Delta^+_k\in \V\otimes\ldots\otimes\V.
\end{multline*}
Since by the hypothesis $\Lambda^\varphi\subseteq X^\varphi$,
we then obtain $X^\varphi\in\U_{\varphi(1)}\otimes\ldots\otimes\U_{\varphi(k)}$,
as desired.

\smallskip
$(1)\Rightarrow(2)$.  
Let us first fix notation.
Let $X\subseteq I_1\times\ldots\times I_m$,
and let $\V_1,\ldots\V_m$ be ultrafilters on the sets $I_1,\ldots,I_m$, respectively.
For every $(b_1,\ldots,b_m)\in I_1\times\ldots\times I_m$ and for every $s<m$, we denote:
\begin{itemize}
\item
$X_{b_1,\ldots,b_s}:=\{(b_{s+1},\ldots,b_m)\in I_{s+1}\times\ldots\times I_m\mid
(b_1,\ldots,b_s,b_{s+1},\ldots,b_m)\in X\}$.
\item
$(X_{b_1,\ldots,b_{s-1}})_{\V_{s+1},\ldots,\V_m}:=\{b\in I_s\mid
X_{b_1,\ldots,b_{s-1},b}\in\V_{s+1}\otimes\ldots\otimes\V_m\}$.
\end{itemize}

By definition of tensor product, we have
\begin{itemize}
\item
$X_{b_1,\ldots,b_{s-1}}\in\V_s\otimes\V_{s+1}\otimes\ldots\otimes\V_m\Longleftrightarrow
(X_{b_1,\ldots,b_{s-1}})_{\V_{s+1},\ldots,\V_m}\in\V_s$.
\end{itemize}

\smallskip
We proceed by recursion on $n\in\N$, and for fixed $n$ by recursion on $\ell=1,\ldots,m$,
and define elements $a_{\ell,n}\in I_\ell$ 
so that the following properties are satisfied:

\begin{enumerate}
\item[$(1)_n$]
$a_{\ell,n}\ne a_{\ell',n'}$ for all $\ell,\ell'$ when $n'<n$, 
and for all $\ell'<\ell$ when $n=n'$. 
\item[$(2)_n$]
If $I_\ell=\N$ then $a_{\ell,n}>a_{\ell,n-1}$.
\item[$(3)_n$]
$a_{\varphi(s),j_s}\in(X^\varphi_{a_{\varphi(1),j_1},\ldots,a_{\varphi(s-1),j_{s-1}}})_
{\U_{\varphi(s+1)},\ldots,\U_{\varphi(k)}}$
for every $1\le s\le k$, for every $\varphi\in\Phi$, and for all $j_1\le\ldots\le j_s\le n$
where $j_t<j_{t+1}$ whenever $\varphi(t)\ge\varphi(t+1)$.
\end{enumerate}

\noindent
\emph{Notation.} For simplicity, in the above formulas we agreed on the following:
\begin{itemize}
\item[--]
When $s=1$, $X^\varphi_{a_{\varphi(1),j_1},\ldots,a_{\varphi(s-1),j_{s-1}}}$
means $X^\varphi$.
\item[--]
When $s=k$, $(X^\varphi_{a_{\varphi(1),j_1},\ldots,a_{\varphi(s-1),j_{s-1}}})_
{\U_{\varphi(s+1)},\ldots,\U_{\varphi(k)}}$ means
\\
$X^\varphi_{a_{\varphi(1),j_1},\ldots,a_{\varphi(k-1),j_{k-1}}}$.
\end{itemize}

Note that above properties will yield the desired result.
Indeed, property $(1)_n$ for every $n$
says that $a_{\ell,n}\ne a_{\ell',n'}$ when $(\ell,n)\ne(\ell',n')$;
and property $(2)_n$ for every $n$ says that $(a_{\ell,n})_n$ is increasing
whenever $I_\ell=\N$. Finally, property $(3)_n$ for every $n$, in the special case $s=k$, says that
$a_{\varphi(k),j_k}\in X^\varphi_{a_{\varphi(1),j_1},\ldots,a_{\varphi(k-1),j_{k-1}}}$,
\emph{i.e.} $(a_{\varphi(1),j_1},\ldots,a_{\varphi(k-1),j_{k-1}},a_{\varphi(k),j_k})\in X^\varphi$,
for every $\varphi\in\Phi$ and for all $j_1\le\ldots\le j_{k}$,
where $j_t<j_{t+1}$ whenever $\varphi(t)\ge\varphi(t+1)$.

\smallskip
Let us start with the base step $n=1$.
We observe that in this case the three properties above reduce to the following:
\begin{itemize}
\item[$(\ast)$]
$a_{\ell,1}\ne a_{\ell',1}$ for all $\ell'<\ell$. 
\item[$(\star)$]
$a_{\varphi(s),1}\in(X^\varphi_{a_{\varphi(1),1},\ldots,a_{\varphi(s-1),1}})_
{\U_{\varphi(s+1)}\otimes\ldots\otimes\U_{\varphi(k)}}$
for every $1\le s\le k$ and for every $\varphi\in\Phi$ where $\varphi(1)<\ldots<\varphi(s)$.
\end{itemize}

We define elements $a_{\ell,1}$ by recursion on $\ell=1,\ldots,m$. 
By hypothesis we know that 
$X^\varphi\in\U_{\varphi(1)}\otimes\U_{\varphi(2)}\otimes\ldots\otimes\U_{\varphi(k)}$, and so:
\begin{itemize}
\item
$(X^\varphi)_{\U_{\varphi(2)},\ldots, \U_{\varphi(k)}}\in\U_{\varphi(1)}$
for every $\varphi\in\Phi$.
\end{itemize}
Pick an element
$$a_{1,1}\in\bigcap
\left\{(X^\varphi)_{\U_{\varphi(2)},\ldots,\U_{\varphi(k)}}\mid 
\varphi\in\Phi,\ \varphi(1)=1\right\}\in\U_1.$$
(As observed in Remark \ref{remark-varphi(1)}, by reordering the sets $I_1,\ldots,I_m$
if necessary, one can always assume that there exists
$\varphi\in\Phi$ with $\varphi(1)=1$.)
So, we have:
\begin{itemize}
\item
$a_{\varphi(1),1}\in(X^\varphi)_{\U_{\varphi(2)},\ldots,\U_{\varphi(k)}}\in\U_{\varphi(1)}$
for all $\varphi\in\Phi$ where $\varphi(1)=1$.
\end{itemize}
In particular:
\begin{itemize}
\item
$X^\varphi_{a_{\varphi(1),1}}\in\U_{\varphi(2)}\otimes\U_{\varphi(3)}\otimes\ldots\otimes\U_{\varphi(k)}$,
and hence 
\\
$(X^\varphi_{a_{\varphi(1),1}})_{\U_{\varphi(3)},\ldots,\U_{\varphi(k)}}\in\U_{\varphi(2)}$
for all $\varphi\in\Phi$ where $\varphi(1)=1$.
\end{itemize}
Then pick an element
\begin{multline*}
a_{2,1}\in\bigcap\left\{(X^\varphi)_{\U_{\varphi(2)},\ldots,\U_{\varphi(k)}}\mid 
\varphi\in\Phi,\ \varphi(1)=2\right\}\ \cap
\\
\bigcap\left\{(X^\varphi_{a_{\varphi(1),1}})_{\U_{\varphi(3)},\ldots,\U_{\varphi(k)}}\mid 
\varphi\in\Phi,\ \varphi(1)=1,\ \varphi(2)=2\right\}\in\U_2,
\end{multline*}
where we agree that the intersection equals $I_2$ in the case where these families are empty.\footnote
{~Note that it is well possible that there are no $\varphi\in\Phi$ such that $\varphi(1)=2$,
and that there are no $\varphi\in\Phi$ such that $\varphi(1)=1$ and $\varphi(2)=2$.
In this case, the element $a_{2,1}$ will play no role, since it will never appear in property $(2 c)$
of the theorem we are proving.}
Then we have: 
\begin{itemize}
\item
$a_{\varphi(1),1}\in(X^\varphi)_{\U_{\varphi(2)},\ldots,\U_{\varphi(k)}}\in\U_{\varphi(1)}$
for every $\varphi\in\Phi$ where $\varphi(1)=2$.
\item
$a_{\varphi(2),1}\in(X^\varphi_{a_{\varphi(1),1}})_{\U_{\varphi(3)},\ldots,\U_{\varphi(k)}}\in\U_{\varphi(2)}$
for every $\varphi\in\Phi$ (if any) where $\varphi(1)=1$ and $\varphi(2)=2$.
\end{itemize}

Inductively iterate the construction, and for $\ell\le m$ pick an element
$$a_{\ell,1}\in\bigcap_{s=1}^\ell
\left\{(X^\varphi_{a_{\varphi(1),1},\ldots,a_{\varphi(s-1),1}})_
{\U_{\varphi(s+1)},\ldots,\U_{\varphi(k)}}\mid 
\varphi\in\Phi,\ \varphi(1)<\ldots<\varphi(s)=\ell\right\}\in\U_\ell,$$
where we agree that the intersection equals $I_\ell$ in the case where these families are empty.
Since $\U_\ell$ is non-principal, we can take $a_{\ell,1}\notin\{a_{1,1},\ldots,a_{\ell-1,1}\}$.
Then for every $s=1,\ldots,\ell$, we have:
\begin{itemize}
\item
$a_{\varphi(s),1}\in (X^\varphi_{a_{\varphi(1),1},\ldots,a_{\varphi(s-1),1}})_
{\U_{\varphi(s+1)},\ldots,\U_{\varphi(k)}}\in\U_{\varphi(s)}$
for every $\varphi\in\Phi$ (if any) where $\varphi(1)<\ldots<\varphi(s)=\ell$.
\end{itemize}
%
It can be verified in a straightforward manner that
the defined elements $a_{\ell,1}$ for $\ell=1,\ldots,m$
satisfy the desired properties $(\ast)$ and $(\star)$.


\smallskip
To better illustrate the idea of the general construction, let us consider
also the next step $n=2$ in detail.
By recursion, we define elements $a_{\ell,2}$ for $\ell=1,\ldots,m$.
We observe that the conditions that must be satisfied
at this step reduce to the following:
\begin{enumerate}
\item[$(1)_{n=2}$]
$a_{\ell,2}\ne a_{\ell',1}$ for all $\ell,\ell'$; and $a_{\ell,2}\ne a_{\ell',2}$ for all $\ell'<\ell$. 
\item[$(2)_{n=2}$]
$a_{\ell,2}>a_{\ell,1}$ whenever $I_\ell=\N$.
\item[$(3)_{n=2}$]
$a_{\varphi(s+t),2}\in(X^\varphi_{a_{\varphi(1),1},\ldots,a_{\varphi(s-1),1},
a_{\varphi(s),2},\ldots,a_{\varphi(s+t-1),2}})_
{\U_{\varphi(s+t+1)},\ldots,\U_{\varphi(k)}}\in\U_{\varphi(s+t)}$
for every $1\le s\le k$, for every $0\le t\le k-s$, and for every $\varphi\in\Phi$ where 
$\varphi(1)<\ldots<\varphi(s-1)$ and
$\varphi(s)<\ldots<\varphi(s+t)$.\footnote
{~When $s=1$ we agree that
$X^\varphi_{a_{\varphi(1),1},\ldots,a_{\varphi(s-1),1},a_{\varphi(s),2},\ldots,a_{\varphi(s+t-1),2}}=
X^\varphi_{a_{\varphi(1),2},\ldots,a_{\varphi(t),2}}$,
and that the condition on $\varphi\in\Phi$ is $\varphi(s)<\ldots<\varphi(s+t)$.
When $t=0$ we agree that 
$X^\varphi_{a_{\varphi(1),1},\ldots,a_{\varphi(s-1),1},a_{\varphi(s),2},\ldots,a_{\varphi(s+t-1),2}}=
X^\varphi_{a_{\varphi(1),1},\ldots,a_{\varphi(s-1),1}}$,
and that the condition on $\varphi\in\Phi$ is $\varphi(1)<\ldots<\varphi(s-1)$.
Finally, when $s=k$ the variable $t$ does not appear, and 
we agree that 
$X^\varphi_{a_{\varphi(1),1},\ldots,a_{\varphi(s-1),1},a_{\varphi(s),2},\ldots,a_{\varphi(s+t-1),2}}=
X^\varphi_{a_{\varphi(1),1},\ldots,a_{\varphi(k-1),1}}$,
and that the condition on $\varphi\in\Phi$ is $\varphi(1)<\ldots<\varphi(k-1)$.}
\end{enumerate}
At the base step $\ell=1$, pick an element
$$a_{1,2}\in\bigcap_{s=1}^{k}\left\{(X^\varphi_{a_{\varphi(1),1},\ldots,a_{\varphi(s-1),1}})_
{\U_{\varphi(s+1)},\ldots,\U_{\varphi(k)}}\mid
\varphi(1)<\ldots<\varphi(s-1), \varphi(s)=1\right\}\in\U_1.$$
Since $\U_1$ is non-principal, we can take $a_{1,2}\notin\{a_{1,1},\ldots,a_{k,1}\}$,
and $a_{1,2}>a_{1,1}$ in case $I_1=\N$.
Then for every $1\le s\le k$ we have:
\begin{itemize}
\item
$a_{\varphi(s),2}\in
(X^\varphi_{a_{\varphi(1),1},\ldots,a_{\varphi(s-1),1}})_
{\U_{\varphi(s+1)},\ldots,\U_{\varphi(k)}}\in\U_{\varphi(s)}$
for every $\varphi\in\Phi$ where $\varphi(1)<\ldots<\varphi(s-1)$ and $\varphi(s)=1$.
\end{itemize}
In particular:
\begin{itemize}
\item
$X^\varphi_{a_{\varphi(1),1},\ldots,a_{\varphi(s-1),1},a_{\varphi(s),2}}\in
\U_{\varphi(s+1)}\otimes\U_{\varphi(s+2)}\otimes\ldots\otimes\U_{\varphi(k)}$, and hence
\\
$(X^\varphi_{a_{\varphi(1),1},\ldots,a_{\varphi(s-1),1},a_{\varphi(s),2}})_
{\U_{\varphi(s+2)},\ldots,\U_{\varphi(k)}}\in\U_{\varphi(s+1)}$ 
for every $\varphi\in\Phi$ where $\varphi(1)<\ldots<\varphi(s-1)$ and $\varphi(s)=1$.
\end{itemize}
At the next step $\ell=2$, pick an element
\begin{align*}
a_{2,2}\in
&
\bigcap_{s=1}^{k}\Big\{(X^\varphi_{a_{\varphi(1),1},\ldots,a_{\varphi(s-1),1}})_
{\U_{\varphi(s+1)}\otimes\ldots\otimes\U_{\varphi(k)}}\,\Big|\,
\\
&\quad\quad\quad\quad\quad
\varphi\in\Phi,\ \varphi(1)<\ldots<\varphi(s-1), \varphi(s)=2\Big\}\ \cap
\\
&
\bigcap_{s=1}^{k-1}\Big\{(X^\varphi_{a_{\varphi(1),1},\ldots,a_{\varphi(s-1),1},a_{\varphi(s),2}})_
{\U_{\varphi(s+2)}\otimes\ldots\otimes\U_{\varphi(k)}}\,\Big|\, 
\\
&\quad\quad\quad\quad\quad
\varphi\in\Phi,\ \varphi(1)<\ldots<\varphi(s-1), \varphi(s)=1, \varphi(s+1)=2\Big\}\in\U_2.
\end{align*}

Since $\U_2$ is non-principal, we can take $a_{2,2}\notin\{a_{1,1},\ldots,a_{k,1},a_{1,2}\}$,
and we can take $a_{2,2}>a_{2,1}$ in case $I_2=\N$.
Then for every $1\le s\le k$ we have: 
\begin{itemize}
\item
$a_{\varphi(s),2}\in
(X^\varphi_{a_{\varphi(1),1},\ldots,a_{\varphi(s-1),1}})_
{\U_{\varphi(s+1)}\otimes\ldots\otimes\U_{\varphi(k)}}\in\U_{\varphi(s)}$
for every $\varphi\in\Phi$ where $\varphi(1)<\ldots<\varphi(s-1)$ and $\varphi(s)=2$;
\end{itemize}
and for every $1\le s\le k-1$ we have:
\begin{itemize}
\item
$a_{\varphi(s+1),2}\in
(X^\varphi_{a_{\varphi(1),1},\ldots,a_{\varphi(s),1}})_
{\U_{\varphi(s+2)}\otimes\ldots\otimes\U_{\varphi(k)}}\in\U_{\varphi(s+1)}$
for every $\varphi\in\Phi$ where $\varphi(1)<\ldots<\varphi(s-1)$ and $\varphi(s)=1$, $\varphi(s+1)=2$.
\end{itemize}
In particular:
\begin{itemize}
\item
$X^\varphi_{a_{\varphi(1),1},\ldots,a_{\varphi(s-1),1},a_{\varphi(s),2}}\in
\U_{\varphi(s+1)}\otimes\U_{\varphi(s+2)}\otimes\ldots\otimes\U_{\varphi(k)}$, and hence
\\
$(X^\varphi_{a_{\varphi(1),1},\ldots,a_{\varphi(s-1),1},a_{\varphi(s),2}})_
{\U_{\varphi(s+2)}\otimes\ldots\otimes\U_{\varphi(k)}}\in\U_{\varphi(s+1)}$ 
for every $\varphi\in\Phi$ where 
\\
$\varphi(1)<\ldots<\varphi(s-1)$ and $\varphi(s)=2$.
\item
$X^\varphi_{a_{\varphi(1),1},\ldots,a_{\varphi(s),1},a_{\varphi(s+1),2}}\in
\U_{\varphi(s+2)}\otimes\U_{\varphi(s+3)}\otimes\ldots\otimes\U_{\varphi(k)}$ and hence
\\
$(X^\varphi_{a_{\varphi(1),1},\ldots,a_{\varphi(s),1},a_{\varphi(s+1),2}})_
{\U_{\varphi(s+3)}\otimes\ldots\otimes\U_{\varphi(k)}}\in\U_{\varphi(s+2)}$ 
for every $\varphi\in\Phi$ where $\varphi(1)<\ldots<\varphi(s-1)$ and $\varphi(s)=1$
and $\varphi(s+1)=2$.
\end{itemize}

At step $\ell=3$, pick an element
\begin{align*}
a_{3,2}\in
&
\bigcap_{s=1}^{k}\Big\{(X^\varphi_{a_{\varphi(1),1},\ldots,a_{\varphi(s-1),1}})_
{\U_{\varphi(s+1)}\otimes\ldots\otimes\U_{\varphi(k)}}\,\Big|\,
\\
& \quad\quad\quad\quad
\varphi\in\Phi,\ \varphi(1)<\ldots<\varphi(s-1), \varphi(s)=3\Big\}\ \cap
\\
& \bigcap_{s=1}^{k-1}\Big\{(X^\varphi_{a_{\varphi(1),1},\ldots,a_{\varphi(s-1),1},a_{\varphi(s),2}})_
{\U_{\varphi(s+2)}\otimes\ldots\otimes\U_{\varphi(k)}}\,\Big|\,
\\
& \quad\quad\quad\quad
\varphi\in\Phi,\ \varphi(1)<\ldots<\varphi(s-1), \varphi(s)=1, \varphi(s+1)=3\Big\}\ \cap
\\
&
\bigcap_{s=1}^{k-1}\Big\{(X^\varphi_{a_{\varphi(1),1},\ldots,a_{\varphi(s-1),1},a_{\varphi(s),2}})_
{\U_{\varphi(s+2)}\otimes\ldots\otimes\U_{\varphi(k)}}\,\Big| 
\\
& \quad\quad\quad\quad
\varphi\in\Phi,\ \varphi(1)<\ldots<\varphi(s-1), \varphi(s)=2, \varphi(s+1)=3\Big\}\ \cap
\\
&
\bigcap_{s=1}^{k-2}\Big\{(X^\varphi_{a_{\varphi(1),1},\ldots,a_{\varphi(s-1),1},a_{\varphi(s),2},a_{\varphi(s+1),2}})_
{\U_{\varphi(s+3)}\otimes\ldots\otimes\U_{\varphi(k)}}\,\Big|
\\
& \quad
\varphi\in\Phi,\ \varphi(1)<\ldots<\varphi(s-1), \varphi(s)=1, \varphi(s+1)=2, \varphi(s+2)=3\Big\}\in\U_3.
\end{align*}
Since $\U_3$ is non-principal, we can take $a_{3,2}\notin\{a_{1,1},\ldots,a_{k,1},a_{1,2}, a_{2,2}\}$.

Iterate the construction, and for $\ell\le k$ pick an element
\begin{multline*}
a_{\ell,2}\in\bigcap_{s=1}^k\bigcap_{t=0}^{k-s}
\Big\{(X^\varphi_{a_{\varphi(1),1},\ldots,a_{\varphi(s-1),1},a_{\varphi(s),2},\ldots,a_{\varphi(s+t-1),2}})_
{\U_{\varphi(s+t+1)}\otimes\ldots\otimes\U_{\varphi(k)}}\mid 
\\
\varphi\in\Phi,\ \varphi(1)<\ldots<\varphi(s-1), \varphi(s)<\ldots<\varphi(s+t-1)<\varphi(s+t)=\ell\Big\}\in\U_\ell.
\end{multline*}
Since $\U_\ell$ is non-principal, we can take $a_{\ell,2}\notin\{a_{1,1},\ldots,a_{k,1},a_{1,2},\ldots,a_{\ell-1,2}\}$.
We observe that, by our definition, for every $s+t\le k$, we have:
\begin{itemize}
\item
$a_{\varphi(s+t),2}\in (X^\varphi_{a_{\varphi(1),1},\ldots,a_{\varphi(s-1),1},a_{\varphi(s),2},\ldots,a_{\varphi(s+t-1),2}})_
{\U_{\varphi(s+t+1)}\otimes\ldots\otimes\U_{\varphi(k)}}\in\U_{\varphi(s+t)}$
for every $\varphi\in\Phi$ where $\varphi(1)<\ldots<\varphi(s-1)$
and $\varphi(s)<\ldots<\varphi(s+t)$.
\end{itemize}
This shows that the defined elements $a_{1,2},\ldots,a_{k,2}$
satisfy the desired properties $(1)_{n=2}$, $(2)_{n=2}$, and $(3)_{n=2}$.

\smallskip
Let us finally turn the general case $n+1>2$.
Assume by inductive hypothesis that properties $(1)_m$, $(2)_m$, and $(3)_m$ hold
for every $m\le n$.

By recursion on $\ell$, we define elements $a_{\ell,n+1}$.
At the base step $\ell=1$, pick an element
\begin{multline*}
a_{1,n+1}\in\bigcap_{s=1}^{k}\Big\{(X^\varphi_{a_{\varphi(1),j_1},\ldots,a_{\varphi(s-1),j_{s-1}}})_
{\U_{\varphi(s+1)}\otimes\ldots\otimes\U_{\varphi(k)}}\mid
\varphi\in\Phi,\ \varphi(s)=1,
\\
 j_1\le\ldots\le j_{s-1}\le n\ \text{where}\ 
j_t<j_{t+1}\ \text{whenever}\ 
\varphi(t)>\varphi(t+1)\Big\}\in\U_1.
\end{multline*}
Since $\U_1$ is non-principal, we can take 
$a_{1,n+1}\notin\{a_{\ell,m}\mid \ell=1,\ldots,k,\ m\le n\}$.
Then for every $1\le s\le k$ we have:
\begin{itemize}
\item
$a_{\varphi(s),n+1}\in
(X^\varphi_{a_{\varphi(1),j_1},\ldots,a_{\varphi(s-1),j_{s-1}}})_
{\U_{\varphi(s+1)}\otimes\ldots\otimes\U_{\varphi(k)}}\in\U_{\varphi(s)}$
for every $\varphi\in\Phi$ such that $\varphi(s)=1$,
and for all $j_1\le\ldots\le j_{s-1}\le n$ where
$j_t<j_{t+1}$ whenever $\varphi(t)>\varphi(t+1)$.
\end{itemize}
In particular:
\begin{itemize}
\item
$X^\varphi_{a_{\varphi(1),j_1},\ldots,a_{\varphi(s-1),j_{s-1}},a_{\varphi(s),n+1}}\in
\U_{\varphi(s+1)}\otimes\U_{\varphi(s+2)}\otimes\ldots\otimes\U_{\varphi(k)}$, and hence
\\
$(X^\varphi_{a_{\varphi(1),j_1},\ldots,a_{\varphi(s-1),j_{s-1}},a_{\varphi(s),n+1}})_
{\U_{\varphi(s+2)}\otimes\ldots\otimes\U_{\varphi(k)}}\in\U_{\varphi(s+1)}$ 
for every $\varphi\in\Phi$ such that $\varphi(s)=1$,
and for all $j_1\le\ldots\le j_{s-1}\le n$ where
$j_t<j_{t+1}$ whenever $\varphi(t)>\varphi(t+1)$.
\end{itemize}
At the next step $\ell=2$, pick an element
\begin{align*}
a_{2,n+1}\in
&
\bigcap_{s=1}^{k}\Big\{(X^\varphi_{a_{\varphi(1),j_1},\ldots,a_{\varphi(s-1),j_{s-1}}})_
{\U_{\varphi(s+1)}\otimes\ldots\otimes\U_{\varphi(k)}}\,\Big|\,\varphi\in\Phi,\,\varphi(s)=2,
\\
&\quad
j_1\le\ldots\le j_{s-1}\le n\ \text{where}\ 
j_t<j_{t+1}\ \text{whenever}\ 
\varphi(t)>\varphi(t+1)\Big\}\ \cap
\\
&
\bigcap_{s=1}^{k-1}\Big\{(X^\varphi_{a_{\varphi(1),j_1},\ldots,a_{\varphi(s-1),j_{s-1}},a_{\varphi(s),n+1}})_
{\U_{\varphi(s+2)}\otimes\ldots\otimes\U_{\varphi(k)}}\,\Big|\,\varphi\in\Phi,\,\varphi(s)=1,
\\
&
\hspace{-1.3cm}
\varphi(s+1)=2,\,
j_1\le\ldots\le j_{s-1}\le n\ \text{where}\ 
j_t<j_{t+1}\ \text{whenever}\ 
\varphi(t)>\varphi(t+1)\Big\}\in\U_2.
\end{align*}
Since $\U_2$ is non-principal, we can take 
$a_{2,n+1}\notin\{a_{\ell,m}\mid \ell=1,\ldots,k,\ m\le n\}$
with $a_{2,n+1}\ne a_{1,n+1}$.
Then for every $1\le s\le k$ we have:
\begin{itemize}
\item
$a_{\varphi(s),n+1}\in
(X^\varphi_{a_{\varphi(1),j_1},\ldots,a_{\varphi(s-1),j_{s-1}}})_
{\U_{\varphi(s+1)}\otimes\ldots\otimes\U_{\varphi(k)}}\in\U_{\varphi(s)}$
for every $\varphi\in\Phi$ such that $\varphi(s)=2$,
and for all $j_1\le\ldots\le j_{s-1}\le n$ where
$j_t<j_{t+1}$ whenever $\varphi(t)>\varphi(t+1)$;
\end{itemize}
and for every $1\le s\le k-1$ we have:
\begin{itemize}
\item
$a_{\varphi(s+1),n+1}\in
(X^\varphi_{a_{\varphi(1),j_1},\ldots,a_{\varphi(s-1),j_{s-1}},a_{\varphi(s),n+1}})_
{\U_{\varphi(s+2)}\otimes\ldots\otimes\U_{\varphi(k)}}\in\U_{\varphi(s+1)}$
for every $\varphi\in\Phi$ such that $\varphi(s)=1$ and $\varphi(s+1)=2$,
and for all $j_1\le\ldots\le j_{s-1}\le n$ where
$j_t<j_{t+1}$ whenever $\varphi(t)>\varphi(t+1)$.
\end{itemize}

Iterate the construction, and for $\ell\le k$ pick an element
\begin{align*}
a_{\ell,n+1}\in
&
\bigcap_{s=1}^k\bigcap_{t=0}^{k-s}
\Big\{(X^\varphi_{a_{\varphi(1),j_1},\ldots,a_{\varphi(s-1),j_{s-1}},a_{\varphi(s),n+1},\ldots,a_{\varphi(s+t-1),n+1}})_
{\U_{\varphi(s+t+1)}\otimes\ldots\otimes\U_{\varphi(k)}}\,\Big|
\\
&\quad
\varphi\in\Phi,\,\varphi(s)<\ldots<\varphi(s+t-1)<\varphi(s+t)=\ell,
\\
&\quad
j_1\le\ldots\le j_{s-1}\le n\ \text{where}\ 
j_t<j_{t+1}\ \text{whenever}\ 
\varphi(t)>\varphi(t+1)\Big\}\in\U_\ell.
\end{align*}
Since $\U_\ell$ is non-principal, we can take 
$$a_{\ell,n+1}\notin\{a_{\ell,m}\mid \ell=1,\ldots,k,\ m\le n\}\cup\{a_{\ell',n+1}\mid\ell'<\ell\}.$$
We observe that, by our definition, for every $s+t\le k$, we have:
$$a_{\varphi(s+t),n+1}\in 
(X^\varphi_{a_{\varphi(1),j_1},\ldots,a_{\varphi(s-1),j_{s-1}},a_{\varphi(s),n+1},\ldots,a_{\varphi(s+t-1),n+1}})_
{\U_{\varphi(s+t+1)}\otimes\ldots\otimes\U_{\varphi(k)}}\in\U_{\varphi(s+t)}$$
for every $\varphi\in\Phi$ where $\varphi(s)<\ldots<\varphi(s+t-1)<\varphi(s+t)=\ell$,
and for all $j_1\le\ldots\le j_{s-1}\le n$ where
$j_t<j_{t+1}$ whenever $\varphi(t)>\varphi(t+1)$.

It can be finally verified that the defined elements $a_{1,n+1},\ldots,a_{k,n+1}$
satisfy the desired properties $(1)_{n+1}$, $(2)_{n+1}$, and $(3)_{n+1}$.

\end{proof}


\begin{thebibliography}{}

\bibitem{acg}
U. Andrews, G. Conant, and I. Goldbring,
Definable sets containing product sets
in expansions of groups, \emph{J. Group Theory} \textbf{22} (2019), 63--82.

\bibitem{ca1}
H. Cartan, Th\'eorie des filtres,
\emph{C. R. Acad. Sci. Paris}, 
\textbf{205} (1937), 595--598.

\bibitem{ca2}
H. Cartan, Filtres et ultrafiltres, 
\emph{C. R. Acad. Sci. Paris}, 
\textbf{205} (1937),  777--779.



\bibitem{ck}
C.C. Chang and H.J. Keisler, \emph{Model Theory}, 3rd edition, North-Holland, 1990.

\bibitem{dn0}
M. Di Nasso, 
Iterated hyper-extensions and an idempotent ultrafilter proof of Rado's Theorem, 
\emph{Proc. Amer. Math. Soc.} \textbf{143} (2015), 1749--1761.

\bibitem{dn}
M. Di Nasso, Hypernatural numbers as ultrafilters, Chapter XI of \cite{lw}.

\bibitem{dgjllm}
M. Di Nasso, I. Goldbring, R. Jin, S. Leth, M. Lupini, and K. Mahlburg,
On a sumset conjecture of Erd\"os, \emph{Canad. J. Math.} \textbf{67} (2015), 795--809.

\bibitem{dj}
M. Di Nasso and R. Jin, 
Foundations of iterated star maps and their use in combinatorics, 
\emph{Ann. Pure Appl. Logic} \textbf{176} (2025).


\bibitem{lw}
P.A. Loeb and M. Wolff (eds.), \emph{Nonstandard Analysis for the Working Mathematician}, 
2nd edition, Springer, 2015.




\bibitem{dgl}
M. Di Nasso, I. Goldbring, and M. Lupini, \emph{Nonstandard Methods in
Ramsey Theory and Combinatorial Number Theory}, Lecture Notes in Mathematics \textbf{2239},
Springer, 2019.

\bibitem{dlmmr}
M. Di Nasso, L. Luperi Baglini, M. Mamino, R. Mennuni, and M. Ragosta, 
Ramsey's witnesses (submitted), arXiv:2503.09246.

\bibitem{dr}
M. Di Nasso and M. Ragosta, Monochromatic exponential triples: an ultrafilter proof,
\emph{Proc. Amer. Math. Soc.}, \textbf{152} (2024), 81--87.
\bibitem{go}
R. Goldblatt, \emph{Lectures on the Hyperreals – An Introduction to Nonstandard Analysis}, 
Graduate Texts in Mathematics \textbf{188}, Springer, 1998.

\bibitem{gb}
I. Goldbring,
Ultrafilter methods in combinatorics, 
\emph{Snapshots of Modern Mathematics from Oberwolfach}, 2021-006.

\bibitem{je}
T. Jech, \emph{Set Theory}, 3rd edition, Springer, 2002.

\bibitem{hl}
N. Hindman and I. Leader, 
Nonconstant monochromatic solutions to systems of linear equations, in
\emph{Topics in Discrete Mathematics}, Springer (2006), 145--154.

\bibitem{hs}
N. Hindman and D. Strauss, \emph{Algebra in the Stone-\v{C}ech
Compactification -- Theory and Applications} (2nd edition),
W. de Gruyter, 2012.

\bibitem{mrr}
J. Moreira, F.K. Richter, and D. Robertson,
A proof of a sumset conjecture of Erd\"os, 
\emph{Annals of Mathematics} \textbf{189} (2019), 605--652.

\bibitem{pu}
C. Puritz, Ultrafilters and standard functions in nonstandard analysis,
\emph{Proc. Lond. Math. Soc.} \textbf{22} (1971), 706--733.

\end{thebibliography}
\end{document}